\newcounter{item}[section]
\newcounter{kirshr}
\newcounter{kirsha}
\newcounter{kirshb}
\newtheorem{theorem}{Theorem}[section]
\newtheorem{corollary}[theorem]{Corollary}
\newtheorem{proposition}[theorem]{Proposition}
\newtheorem{remark}[theorem]{Remark}
\newtheorem{definition}[theorem]{Definition}
\newcommand{\Ahmed}[2]{\stackrel{\,#2}{#1}\ \!\!\!}
\newcommand\overcirc[1]{\stackrel{\tiny\circ}{#1}}
\newcommand\undersym[2]{\raisebox{-6pt}{\tiny$#2$}{\kern-5pt}\mbox{$#1$}}
\def\Section#1{\vspace{30truept}\addtocounter{section}{1}\setcounter{theorem}{0}\setcounter{equation}{0}
{\noindent\Large\bf\arabic{section}.~~#1}\par \vspace{12pt}}
\begin{document}
\title{{\bf On Finslerized Absolute Parallelism\\ Spaces}\footnote{arXiv number: 1206.4505 [math.DG] } \footnote{This paper has been presented in the 8th. International Conference on \lq \lq Finsler Extensions of Relativity Theory (FERT2012)", 25 June-01 July 2012, Moscow- Fryazino, Russia.}}
\author{{\bf{Nabil L. Youssef}$^{1 ,2}$, Amr M. Sid-Ahmed$^{1}$ and {Ebtsam H. Taha}$^{1 ,2}$}}
\date{}
\maketitle                     
\vspace{-1cm}
\begin{center}
{$^{1}$Department of Mathematics, Faculty of Science,\\ Cairo
University, Giza, Egypt}\\
\vspace{0.3cm}
{$^{2}$Center for Theoretical Physics (CTP)\\ at the British University in Egypt (BUE)}
\end{center}
\vspace{-0.5cm}
\begin{center}
$
\begin{array}{ll}
\text{Emails:}&\text{nlyoussef@sci.cu.edu.eg, nlyoussef2003@yahoo.fr} \\
  &\text{amr@sci.cu.edu.eg, amrsidahmed@gmail.com} \\
  &\text{ebtsam.taha@sci.cu.edu.eg, ebtsam.h.taha@hotmail.com}
\end{array}
$
\end{center}

\vspace{1cm} \maketitle
\smallskip


\noindent{\bf Abstract.} The aim of the present paper is to
construct and investigate a Finsler structure within the framework of a Generalized Absolute Parallelism
space (GAP-space). The Finsler structure is obtained from the vector fields forming the parallelization of the GAP-space. The resulting space, which we refer to
 as a Finslerized Absolute Parallelism (Parallelizable) space, combines within its geometric structure the simplicity of GAP-geometry and the richness of Finsler geometry, hence is potentially more suitable for applications and especially for describing physical phenomena.
  A study of the geometry of the two structures and their interrelation is
carried out. Five connections are introduced and their
 torsion and curvature tensors derived. Some special Finslerized Parallelizable spaces are singled out. One of the main reasons  to introduce this new space is that both Absolute Parallelism and Finsler geometries have proved effective in the
formulation of physical theories, so it is worthy to try to build a more general geometric structure that would share the benefits of both geometries.

\bigskip

\noindent{\bf Keywords:} Pullback bundle; Nonlinear connection; Finsler space; Generalized absolute parallelism space; Generalized Lagrange space; Landsberg space; Berwald space; Minkowskian space; Riemannian space.

\bigskip

\noindent{\bf MSC 2010}: 53A40, 53B40, 53B50, 51P05.

\vspace{3pt}
\noindent{\bf PACS 2010}: 02.04.Hw, 45.10.Na, 04.20.-q, 04.50.-h.

\newpage


\Section{Introduction and Motivation }
\hspace{12pt}
The philosophy of geometrization of physics has been advocated by Albert Einstein early in the twentieth century. He has successfully used this philosophy to capture the notion of the gravitational field in the language of four dimensional Reimannian geometry. On the one hand, the degrees of freedom in this case are ten and on the other hand it is shown that gravity is well described by ten field variables, namely, the components of the metric tensor. Extending this philosophy to deal with other interactions, besides the gravity, one needs to increase the degrees of freedom of the geometry used. Two main strategies arise: either to increase the dimension of the Riemmannian space considered or to deal with another wider geometry enjoying more degrees of freedom (without increasing the dimension). Our choice is the second strategy. Theories constructed in such geometries (cf. for example \cite{Mikhail}, \cite{Moller},  \cite{quant} and \cite{quant1}) show the advantages of using geometries with extra degrees of freedom. This is clearly manifested through physical applications of such theories (cf. \cite{*} and \cite{**}).
\bigskip

Finsler geometry is a generalization of Riemmannian geometry. It is much wider in scope and richer in content than Riemannian geometry.
In the rich arena of Finsler geometry,
geometric objects depend in general on both the positional argument $x$ and the directional argument $y$.\,This
is a crucial difference between Finsler geometry and Reimannian geometry in which the geometric
objects of the latter depend only on the positional argument $x$. Moreover, unlike Riemannian geometry which
has one canonical linear connection defined on $M$, Finsler geometry admits (at least) four Finsler
connections: Cartan, Berwald, Chern and Hashigushi  connections. However, these are not defined on the
manifold $M$ but are defined on the pull-back bundle $\pi^{-1}(TM)$ (the pullback bundle of the tangent bundle $TM$ by $\pi : TM \longrightarrow M$) \cite{SG} and \cite{Vector bundles}. Finally, in the context of
Riemannian geometry, there is only one curvature tensor corresponding to the canonical connection and
no torsion tensor. In Finsler geometry, for each connection defined corresponds three curvature tensors and five torsion tensors.

\bigskip

  The geometry of parallelizable manifolds or absolute parallelism geometry (AP-geometry) has many
advantages in comparison to Riemannian geometry. As opposed to Reimannian geometry,
which admits only one symmetric linear connection, AP-geometry admits at least four (built in)
linear connections, two of which are non-symmetric and three of which have non-vanishing
curvature tensors \cite{AMR}. Last, but not least, associated with an AP-space, there is a Riemannian
structure defined in a natural way. Consequently, AP-geometry contains within its geometric structure
all the mathematical machinery of Riemannian geometry.

\bigskip

The GAP-geometry \cite{GAP} is a generalization of the classical AP-geometry. As in the case of Finsler geometry, the geometric objects of the
GAP-space live in the pull-back bundle $\pi^{-1}(TM)$. Accordingly, they depend
in general on both the positional argument $x$ and the directional argument $y$. Many
geometric objects which have no counterpart in the classical AP-geometry emerge in this more general context.

\bigskip

Since both the AP and Finsler geometries have been successfully used in describing physical phenomena, it is natural and useful to construct a more general
geometric structure that would enjoy the advantages of both geometries. To accomplish this, we construct what we refer to as a Finsleraized Absolute Parallelism space or a Finslerized Parallelizable space (FP-space). Due to its richer and wider geometric structure compared to both
AP-geometry and Finsler geometry, an FP-space  may be a better candidate for the
attempts of unification of fundamental interactions. This is because an FP-space has more degrees of freedom than either AP-space or
Finsler space; both spaces already being used in the process of unification and solving physical problems.

\bigskip

In the present paper, we construct a Finsler structure on a GAP-space. The paper consists mainly of two (related) parts; Finslerized parallelizable  spaces and some  special Finslerized Parallelizable spaces. The paper is organized in the following manner. In section 1, following the introduction, we give a short survey of the basic concepts of
Finsler geometry. In section 2,  we give a brief account of the GAP-geometry. In section 3, we introduce a Finsler structure on the GAP-space defined in terms of the $\pi$-vector fields $\lambda$'s forming the parallelization of the GAP-space. We show that the Finsler metric coincides with the GAP-metric. This gives a strong link between the two geometric structures. A study of the geometric objects of the
resulting Finsler structure and the geometric objects of the GAP-space is accomplished. We show that all geometric objects of an FP-space can be expressed in terms of the $\lambda$'s (including the nonlinear connection). In section 4, some special Finslerized Parallelizable spaces are studied, namely, Landsberg, Berwald, Minkowskian and Riemannian spaces. Some interesting results concerning these spaces are obtained and the relationship between them is investigated. We end the paper with some concluding remarks.


\Section{General review on Finsler geometry}

 We give here a brief account of Finsler geometry. Most of the material presented here may be found in \cite{FB} and \cite{SG}. For more details we refer, for example, to \cite{Bejancu}, \cite{R}, \cite{MC} and \cite{TS}. \par
Let $M$ be a differentiable manifold of dimension $n$ and of class $C^{\infty}$. Let $\pi:TM\to M$
be its tangent bundle. If $(U,\, x^{\mu})$ is a local chart on $M$,
then ($\pi^{-1}(U), \ (x^{\mu},\, y^{\mu}$)) is the corresponding
local chart on $TM$. The coordinate transformation law on $TM$ is
given by:
$$x^{\mu'} = x^{\mu'}(x^{\nu}), \ \ \ y^{\mu'} = p^{\mu'}_{\nu} \, y^{\nu},$$
where $p^{\mu'}_{\nu} := \frac{\partial {x^{\mu'}}}{{\partial
x^{\nu}}}$ and $\text {det} (p^{\mu'}_{\nu})\neq 0.$
\begin{definition}
A generalized Lagrange space is a pair $\textbf{GL}^n :=(M,\, g(x,\,y))$, where $M$ is a smooth $n$-dimensional manifold and $g(x,\,y)$ is a symmetric non degenerate tensor field of type $(0,2)$ on TM.
\end{definition}

\begin{definition}
A Lagrange structure on a manifold $M$ is a mapping ${L}: TM \longrightarrow \mathbb{R}$ with the following properties:
\begin{description}
    \item [(a)] ${L}$ is $C^{\infty}$ on {\fontsize{18}{10}{$\tau$}}$M$ $:= TM\setminus \{0\}$ and $C^{0}$ on TM,
    \item [(b)] The Hessian of ${L}$, having components $${g_{\alpha \beta}}(x,\,y) := \frac{1}{2} {\dot{\partial}_ {\alpha}  \dot{\partial}_ {\beta}} {L},$$
is non degenerate  at each point of {\fontsize{18}{10}{$\tau$}}$M$, where $\dot{\partial}_\mu := \frac{\partial}{\partial y^\mu}$.
  \end{description}
The pair $\textbf{L}^{n}:=(M,\,L(x,\,y))$ is called a Lagrange manifold or a Lagrange space.
\end{definition}
\begin{definition}
A Finsler structure on a manifold $M$ is a mapping $F : TM \longrightarrow \mathbb{R}$ with the following properties:
  \begin{description}
    \item [(a)] $F$ is $C^{\infty}$ on {\fontsize{18}{10}{$\tau$}}$M$ and $C^{0}$ on TM,
    \item [(b)] For all $ (x,y)\in TM,\, F(x,\,y) \ge 0 \, \text{ and } \, F(x,\,y) = 0 \, \Longleftrightarrow \,y=0 $,
    \item [(c)] $F$ is positively homogenous of degree one in y; that is, $F(x,\,ay)= aF(x,\,y) \, \, \forall a >0,$
    \item [(d)] The Hessian of $ {F}^{2}$, having components $${g_{\alpha \beta}}(x,\,y) := \frac{1}{2} {\dot{\partial}_ {\alpha}  \dot{\partial}_ {\beta}} {F} ^{2},$$
is positive definite at each point of {\fontsize{18}{10}{$\tau$}}$M$.
  \end{description}
\end{definition}
The pair $\textbf{F}^{n}:=(M,\, F(x,\,y))$ is called a Finsler manifold or a Finsler space and the symmetric bilinear form  ${g_{\alpha \beta}}(x,\,y)$ is called the Finsler metric tensor of  the Finsler space $\textbf{F}^{n}$.
\begin{remark}
\em{ If ${g_{\alpha \beta}}(x,\,y)$ doesn't depend on $y^i$, then $\textbf{F}^n$ is a Riemannian space $\textbf{R}^n$. In fact, we have the following sequence of proper inclusions:
$$\{\textbf{R}^n\} \subset \{\textbf{F}^n\} \subset \{\textbf{L}^n\} \subset  \{\textbf{GL}^n\}.$$}
\end{remark}
\begin{definition} A nonlinear connection $N$ on $TM$ is a system of $n^{2}$ functions
$N^{\alpha}_{\beta}(x, \ y)$, defined on every local chart of TM, transforming according to the law
\begin{equation}N^{\alpha'}_{\beta'} = p^{\alpha'}_{\alpha} \, p^{\beta}_{\beta'} \, N^{\alpha}_{\beta} +
p^{\alpha'}_{\mu} \,
p^{\mu}_{\beta'\sigma'} \, y^{\sigma'}\end{equation}
under any change of coordinates $(x^{\mu}) \longrightarrow (x^{\mu'})$, where
$p^{\mu}_{\beta'\sigma'} := \frac{\partial
{p^{\mu}_{\beta'}}} {\partial x^{\sigma'}} = \frac{\partial^{2}
x^{\mu}}{\partial x^{\beta'}\partial x^{\sigma'}}.$
\end{definition}
The nonlinear connection $N$ leads to the direct sum decomposition
\begin{center}
$T_{u}(TM) = H_{u}(TM)\oplus V_{u}(TM) \ \ \forall u\in \, TM,$
\end{center}
where $H_u(TM)$ is the {\it horizontal} space at $u$ associated with
$N$ supplementary to the {\it vertical} space $V_{u}(TM)$. If
$$\delta_{\mu}: = \partial_{\mu} - N^{\alpha}_{\mu}\dot
{\partial}_{\alpha},$$ where $\partial_{\mu}: =
\frac{\partial}{\partial x^{\mu}}$, then $(\dot{\partial_{\mu}})$ is
the natural basis of $V_u(TM)$ and  $(\delta_{\mu})$ is the natural
basis of $H_{u}(TM)$ adapted to $N$.
\begin{definition}
  A Finsler connection on $M$ is a triplet
$F\Gamma = (\textbf{F}^{\alpha}_{\mu\nu}(x, y),\,\textbf{N}^{\alpha}_{\mu}(x, y),\,
\textbf{C}^{\alpha}_{\mu\nu}(x, y))$, where $\textbf{F}^{\alpha}_{\mu\nu}(x,\,y)$ transform as the
coefficients of a linear connection, $\textbf{N}^{\alpha}_{\mu}(x, y)$ transform as the
coefficients of a
nonlinear connection and $\textbf{C}^{\alpha}_{\mu\nu}(x,\,y)$
transform as the components of a tensor field of type $(1,2)$.
\par
The horizontal ($h$-) and vertical ($v$-) covariant derivatives of a $(1,1)$-tensor field
$X^{\alpha}_{\mu}$
with respect to the Finsler connection $F\Gamma = (\textbf{F}^{\alpha}_{\mu \nu},\, \textbf{N}^{\alpha}_{\mu},\, \textbf{C}^{\alpha}_{\mu \nu})$ are defined respectively by:
\begin{equation}X^{\alpha}_{\mu|\nu}: =
\delta_{\nu}X^{\alpha}_{\mu} +
X^{\eta}_{\mu} \, \textbf{F}^{\alpha}_{\eta\nu} -
X^{\alpha}_{\eta} \, \textbf{F}^{\eta}_{\mu\nu},
\end{equation}
\begin{equation}
X^{\alpha}_{\mu||\nu} := \dot{\partial}_{\nu}X^{\alpha}_{\mu} +
X^{\eta}_{\mu} \, \textbf{C}^{\alpha}_{\eta\nu} - X^{\alpha}_{\eta} \,
\textbf{C}^{\eta}_{\mu\nu}.
\end{equation}
The above rules can be evidently generalized to any tensor of arbitrary type. \end{definition}
\begin{definition}The torsion tensors of a Finsler connection $F\Gamma= (\textbf{F}^{\alpha}_{\mu \nu},\, \textbf{N}^{\alpha}_{\mu},\, \textbf{C}^{\alpha}_{\mu \nu})$ on $M$ are given by:
\begin{description}
  \item[(a)] $(h)h$-torsion tensor $\textbf{T}^{\alpha}_{\mu \nu} := \textbf{F}^{\alpha}_{\mu \nu} - \textbf{F}^{\alpha}_{\nu \mu},$
  \item[(b)] $(h)hv$-torsion tensor $\textbf{C}^{\alpha}_{\mu \nu} := \text{ the third connection coefficients } \textbf{C}^{\alpha}_{\mu \nu},$
  \item[(c)] $(v)h$-torsion tensor $\textbf{R}^{\alpha}_{\mu \nu} := \delta_{\nu} \textbf{N}^{\alpha}_{\mu} - \delta_{\mu} \textbf{N}^{\alpha}_{\nu},$
  \item[(d)] $(v)hv$-torsion tensor $\textbf{P}^{\alpha}_{\mu \nu} := {\dot{\partial}}_{\nu} \textbf{N}^{\alpha}_{\mu} -  \textbf{F}^{
  \alpha}_{\mu \nu},$
  \item[(e)] $(v)v$-torsion tensor $\textbf{S}^{\alpha}_{\mu \nu} := \textbf{C}^{\alpha}_{\mu \nu} - \textbf{C}^{\alpha}_{\nu \mu}.$
\end{description}
\end{definition}
\begin{definition}The curvature tensors of a Finsler connection $\textbf{F}\Gamma = (\textbf{F}^{\alpha}_{\mu \nu},\, \textbf{N}^{\alpha}_{\mu},\, \textbf{C}^{\alpha}_{\mu \nu})$ on $M$ are given by:
\begin{description}
\item[(a)] $h$-curvature tensor ${\textbf{R}^{\alpha}_{\mu \nu \sigma}} := \mathfrak{U}_{(\nu,\sigma)} \{ \delta_{\sigma} \textbf{F}^{\alpha}_{\mu \nu} + \textbf{F}^{\eta}_{\mu \nu} \, \textbf{F}^{\alpha}_{\eta \sigma}\} + \textbf{C}^{\alpha}_{\mu \eta} \, \textbf{R}^{\eta}_{\nu \sigma},$
   \item[(b)] $hv$-curvature tensor ${\textbf{P}^{\alpha}_{\mu \nu \sigma}} := \dot{\partial_{\sigma}} \textbf{F}^{\alpha}_{\mu \nu} - \textbf{C}^{\alpha}_{\mu \sigma | \nu} + \textbf{C}^{\alpha}_{\mu \eta} \, \textbf{P}^{\eta}_{\nu \sigma}, $
   \item[(c)] $v$-curvature tensor ${\textbf{S}^{\alpha}_{\mu \nu \sigma}} := \mathfrak{U}_{(\nu,\sigma)} \{ \dot{\partial}_{\sigma} \textbf{C}^{\alpha}_{\mu \nu} + \textbf{C}^{\eta}_{\mu \sigma } \, \textbf{C}^{\alpha}_{\eta \nu} \}, $
\end{description}
 where $ \mathfrak{U}_{(\nu, \sigma)} \{ A_{\nu \sigma} \} := A_{\nu \sigma} - A_{\sigma \nu}  $ is the alternate sum with respect to the indices $\nu$ and $\sigma $.
\end{definition}
\begin{definition}
For a Finsler space, $F^{n} = (M, {F})$, we have the following geometric objects:
\end{definition}
\vspace{-0.8cm}
\begin{equation}\label{Cartan tensor}\!
\textbf{(a)}\,\,\displaystyle{C_{\beta \mu \nu } := \frac{1}{2} \,
\dot{\partial}_{\beta} g_{\mu \nu} = \frac{1}{4} \,
\dot{\partial}_{\beta} \dot{\partial}_{\mu} \dot{\partial}_{\nu}
{F}^{2}}; \text{ \emph{the Cartan tensor},} \qquad \qquad \qquad \qquad
\qquad  \qquad \, \, \, \, \, \, \, \,
\end{equation}
\vspace{-0.4cm}
\begin{equation}\! \label{dot Christoffel}
\textbf{(b)}\,\, \displaystyle{C^{\alpha}_{\mu \nu}:= g^{
\alpha\eta} \, C_{\eta \mu \nu} = \frac{1}{2}
g^{\alpha\eta}(\dot{\partial}_{\mu} g_{\nu\eta} +
\dot{\partial}_{\nu} g_{\mu\eta} - \dot{\partial}_{\eta}
g_{\mu\nu})}; \text{ the } \dot{\partial}\text{-\emph{Christoffel
symbols},} \qquad \, \,\, \, \,\, \, \, \,
\end{equation}
\begin{equation}
\!\label{formal Christoffel}
\textbf{(c)}\,\,\displaystyle{\gamma^{\alpha}_{\mu \nu} :=
\frac{1}{2} \, g^{\alpha \eta} ( \partial_{\mu}g_{ \nu\eta} +
\partial_{\nu} g_{\mu \eta} -\partial_{\eta} g_{\mu \nu})}; \text{ \emph{the formal
Christoffel symbols},} \qquad \,\,\,\,\,\,\,\,\,\,\,\,\,\,\,
\end{equation}
\begin{equation}\label{delta Christoffel}\!
\textbf{(d)}\,\,\displaystyle{\Gamma^{\alpha}_{\mu\nu} := \frac{1}{2}\,
g^{\alpha\eta}(\delta_{\mu} g_{\nu\eta} + \delta_{\nu} g_{\mu\eta} -
\delta_{\eta} g_{\mu\nu})};\text{ the } \delta\text{-\emph{Christoffel
symbols}.}\qquad \qquad \qquad \qquad \qquad
\end{equation}
 \begin{theorem}
Let $(M,\,F)$ be a Finsler space.
\begin{description}
  \item[(a)]  The $n$ functions on $TM$, defined by
\begin{equation}\label{spray}
    G^{\alpha}(x,\,y):=\frac{1}{2}\gamma^{\alpha}_{\mu\nu}\,y^{\mu} \,y^{\nu},
\end{equation}are the components of a spray called the canonical spray associated with $(M,\, F)$.

  \item[(b)] The $n^2$ functions $N^{\alpha}_{\beta}$  on $TM$, defined by
\begin{equation}\label{Nonlinear}
    N^{\alpha}_{\beta}:=\dot{\partial}_{\beta}G^{\alpha},
\end{equation}
are the coefficients of a nonlinear connection called the Barthel
connection, or the Cartan nonlinear connection, associated with $(M,\,F)$.
  \item[(c)] The $n^3$ functions $G^{\alpha}_{\sigma \beta}$ on $TM$, defined by
\begin{equation}\label{berwald linear}
G^{\alpha}_{\sigma
\beta}:={{\dot{\partial}_{\beta}}N^{\alpha}_{\sigma}},
\end{equation}
are the coefficients of a linear connection, called the Berwald (linear) connection.
\end{description}
\end{theorem}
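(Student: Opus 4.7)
The plan is to verify the three parts in sequence, each obtained from the preceding transformation law by a single vertical differentiation $\dot{\partial}_{\beta}$. For (a) I must establish two things: that the $n$ functions $G^{\alpha}$ are positively homogeneous of degree $2$ in $y$, and that they transform as the coefficients of a spray on $TM$. Parts (b) and (c) then follow almost mechanically by applying $\dot{\partial}$ twice in succession.

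For (a), homogeneity is the easier half. Since $F$ is $1$-homogeneous in $y$, $F^{2}$ is $2$-homogeneous, $g_{\mu\nu}=\tfrac12 \dot{\partial}_{\mu}\dot{\partial}_{\nu} F^{2}$ is $0$-homogeneous, and the positional derivatives $\partial_{\eta} g_{\mu\nu}$ preserve this degree; hence $\gamma^{\alpha}_{\mu\nu}$ is $0$-homogeneous and $G^{\alpha}=\tfrac12 \gamma^{\alpha}_{\mu\nu} y^{\mu} y^{\nu}$ is $2$-homogeneous. For the transformation law I would compute $\gamma^{\alpha'}_{\mu'\nu'} y^{\mu'} y^{\nu'}$ directly, using $g_{\mu'\nu'}=p^{\mu}_{\mu'} p^{\nu}_{\nu'} g_{\mu\nu}$ together with the chain rule on $TM$,
\[
\partial_{\mu'} \;=\; p^{\mu}_{\mu'}\,\partial_{\mu} \;+\; p^{\nu}_{\mu'\sigma'}\, y^{\sigma'}\, \dot{\partial}_{\nu},
\]
where the extra $\dot{\partial}$-term appears precisely because $g$ depends on $y$. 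Expanding $\partial_{\mu'} g_{\nu'\eta'}$ in this way and assembling $\gamma^{\alpha'}_{\mu'\nu'}$, the spurious contributions involving $\dot{\partial} g$ are symmetric in $(\mu',\nu',\sigma')$ in the correct way to cancel after contraction with $y^{\mu'} y^{\nu'}$, and what survives is the spray transformation law
\[
2\,G^{\alpha'} \;=\; 2\,p^{\alpha'}_{\alpha}\,G^{\alpha} \;+\; p^{\alpha'}_{\mu}\,p^{\mu}_{\rho'\sigma'}\,y^{\rho'}\,y^{\sigma'}.
\]

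Part (b) is then immediate: applying $\dot{\partial}_{\beta'}=p^{\beta}_{\beta'}\,\dot{\partial}_{\beta}$ to both sides of the spray law and using $\dot{\partial}_{\beta'} y^{\rho'} = \delta^{\rho'}_{\beta'}$ together with the symmetry $p^{\mu}_{\rho'\sigma'}=p^{\mu}_{\sigma'\rho'}$ reproduces exactly the nonlinear connection rule (1.1). One further vertical derivative $\dot{\partial}_{\gamma'}$ of (1.1) yields
\[
G^{\alpha'}_{\beta'\gamma'} \;=\; p^{\alpha'}_{\alpha}\,p^{\beta}_{\beta'}\,p^{\gamma}_{\gamma'}\,G^{\alpha}_{\beta\gamma} \;+\; p^{\alpha'}_{\mu}\,p^{\mu}_{\beta'\gamma'},
\]
which is the classical transformation rule for linear connection coefficients, proving (c). The main obstacle is the computation in (a): because $g_{\mu\nu}$ genuinely depends on $y$, the formal Christoffel symbols $\gamma^{\alpha}_{\mu\nu}$ do not individually transform as a linear connection, and one must check carefully that every residual $\dot{\partial} g$ contribution vanishes upon contraction with $y^{\mu'} y^{\nu'}$; once that identity is in hand, the remainder of the theorem reduces to routine chain-rule bookkeeping.
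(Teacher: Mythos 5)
Your proposal is correct, but note that the paper itself offers no proof of this theorem: it is stated in the review Section~1 as standard material quoted from the Finsler literature (the references cited there), so there is no in-paper argument to compare against. Your verification is the standard one and is sound: homogeneity of degree $2$ of $G^{\alpha}$ follows exactly as you say, and the chain rule $\partial_{\mu'}=p^{\mu}_{\mu'}\partial_{\mu}+p^{\nu}_{\mu'\sigma'}y^{\sigma'}\dot{\partial}_{\nu}$ together with $g_{\mu'\nu'}=p^{\mu}_{\mu'}p^{\nu}_{\nu'}g_{\mu\nu}$ gives the spray transformation law, after which (b) and (c) are indeed obtained by successive vertical differentiation, reproducing the law (1.1) and then the affine-connection law. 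The one step you leave implicit deserves to be named: the residual $\dot{\partial}g$ contributions do not merely cancel against one another, they vanish term by term, because the Cartan tensor $C_{\beta\mu\nu}=\tfrac12\dot{\partial}_{\beta}g_{\mu\nu}=\tfrac14\dot{\partial}_{\beta}\dot{\partial}_{\mu}\dot{\partial}_{\nu}F^{2}$ is totally symmetric and $g_{\mu\nu}$ is $0$-homogeneous, so Euler's theorem gives
\begin{equation*}
y^{\mu}\,\dot{\partial}_{\rho}g_{\mu\nu}=y^{\mu}\,\dot{\partial}_{\mu}g_{\rho\nu}=0 ,
\end{equation*}
and every $\dot{\partial}g$ term in $\gamma^{\alpha'}_{\mu'\nu'}y^{\mu'}y^{\nu'}$ carries such a contraction. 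With that identity made explicit (it is precisely the point where the Finsler, as opposed to generalized Lagrange, hypothesis enters), your argument is complete; your sign convention for the inhomogeneous term is also consistent with (1.1), since $p^{\alpha'}_{\mu}p^{\mu}_{\rho'\sigma'}y^{\rho'}y^{\sigma'}=-p^{\alpha'}_{\mu\nu}y^{\mu}y^{\nu}$ by differentiating $p^{\alpha'}_{\mu}p^{\mu}_{\beta'}=\delta^{\alpha'}_{\beta'}$.
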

\begin{remark}
\em{ It should be noted that $\gamma^{\alpha}_{\mu \nu}$ are neither the coefficients of a linear connection nor the components of a tensor. However, it gives rise to a nonlinear connection (Barthel connection) and a linear connection (Berwald connection) as shown above.}\end{remark}
\par
In Finsler geometry there are at least four Finsler connections which are the most famous: the Cartan, Berwald, Chern and Hashiguashi connections. The nonlinear connection associated with any one of these Finsler connections coincides with the Barthel connection. The widely using connection in Finsler geometry is the Cartan connection in the sense that the other Finsler connections can be obtained from it by applying certain processes \cite{Mats}. In this paper, we deal only with two Finsler connections: the Cartan connection and the Berwald connection which are the most important ones. In the next two theorems, we state the existence and uniqueness theorems for these two Finsler connections.

\begin{theorem}
On a Finsler manifold $(M,\, F)$, there exists a unique Finsler connection $F\Gamma= (\textbf{F}^{\alpha}_{\mu \nu},\, \textbf{N}^{\alpha}_{\mu},\, \textbf{C}^{\alpha}_{\mu \nu})$ having the following properties:
 \begin{description}
   \item[$C_1:$] ${y^{\alpha}}_{| \,\beta} = 0,$ i.e. $\textbf{N}^{\alpha}_{\beta} =y^{\mu}\,\textbf{F}^{\alpha}_{\beta \mu}$,
   \item[$C_2:$] $F \Gamma$ is $h$-metric: $g_{\mu \nu |\alpha}=0$, i.e. $\displaystyle{\delta_\alpha g_{\mu \nu}=g_{\nu \beta} \textbf{F}^\beta_{\mu\alpha}+g_{\mu \beta}
     \textbf{F}^\beta_{\nu\alpha}}$,
   \item[$C_3:$] $F \Gamma$ is $v$-metric: $g_{\mu \nu ||\alpha}=0$, i.e. $\displaystyle{\dot{\partial}_\alpha g_{\mu \nu}=g_{ \nu \beta}  \textbf{C}^\beta_{\mu\alpha}+g_{\mu \beta}
     \textbf{C}^\beta_{\nu\alpha}}$,
   \item[$C_4:$] $F \Gamma$ is $h$-symmetric: $\textbf{T}^{\alpha}_{\mu \nu}=0$, i.e. $\textbf{F}^{\alpha}_{\mu \nu} = \textbf{F}^{\alpha}_{\nu \mu}$,
   \item[$C_5:$] $F \Gamma$ is $v$-symmetric: $\textbf{S}^{\alpha}_{\mu \nu}=0$, i.e. $\textbf{C}^{\alpha}_{\mu \nu} = \textbf{C}^{\alpha}_{\nu \mu}$.
 \end{description}
 This connection is called the \textbf{Cartan} connection.
 \end{theorem}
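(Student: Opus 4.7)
The plan is to prove uniqueness first and then existence, treating the three pieces $\textbf{C}^\alpha_{\mu\nu}$, $\textbf{N}^\alpha_\beta$, $\textbf{F}^\alpha_{\mu\nu}$ in an order that breaks the apparent circularity between the linear and nonlinear parts. Note that axiom $C_3$ together with $C_5$ is a pointwise algebraic system in the $y$-derivatives of $g$, while $C_2$ together with $C_4$ is a formally identical system in the $\delta$-derivatives of $g$; in both cases the standard Christoffel cyclic permutation argument produces a unique solution. The subtle point is that the $\delta$-derivative already depends on $\textbf{N}^\alpha_\beta$, which in turn is tied to $\textbf{F}^\alpha_{\mu\nu}$ through $C_1$. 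That is the circularity I will have to cut.

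First I would treat $\textbf{C}$: write down $C_3$ with the three cyclic permutations of $(\mu,\nu,\alpha)$, add two and subtract one, and use $C_5$ to solve for $\textbf{C}^\alpha_{\mu\nu}$. This recovers exactly formula (1.2). Next I pretend $\textbf{N}$ is known and run the same cyclic trick on $C_2$, using $C_4$, obtaining formula (1.5) for $\textbf{F}^\alpha_{\mu\nu}$ in terms of the (still hypothetical) $\textbf{N}$. Hence once $\textbf{N}$ is pinned down, both $\textbf{F}$ and $\textbf{C}$ are forced.

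To pin down $\textbf{N}$, I would exploit the homogeneity built into the Finsler structure. Contract the formula for $\textbf{F}^\alpha_{\mu\nu}$ with $y^\mu y^\nu$; every $\delta$-derivative of $g$ becomes $\partial$-derivative of $g$ plus a term $-2 \textbf{N}^\sigma_\cdot C_{\sigma\cdot\cdot}$, and each such extra term vanishes after contraction with $y$ because $F^2$ is $2$-homogeneous, so $g_{\mu\nu}$ is $0$-homogeneous and $y^\sigma C_{\sigma\mu\nu}=0$. The upshot is
\begin{equation*}
y^\mu y^\nu\,\textbf{F}^\alpha_{\mu\nu}\;=\;\gamma^\alpha_{\mu\nu}\,y^\mu y^\nu\;=\;2G^\alpha .
\end{equation*}
Combining this with $C_1$ (i.e. $\textbf{N}^\alpha_\beta=y^\mu\textbf{F}^\alpha_{\beta\mu}$) gives $y^\beta \textbf{N}^\alpha_\beta = 2G^\alpha$. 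Differentiating with respect to $y^\beta$ and using the positive $1$-homogeneity of $\textbf{N}^\alpha_\beta$ in $y$ (which one verifies from the explicit expression of $\textbf{F}$ and the homogeneity degrees of $g$ and $\delta g$), one concludes $\textbf{N}^\alpha_\beta=\dot\partial_\beta G^\alpha$, i.e. $\textbf{N}$ must be the Barthel connection (1.7). Uniqueness is then complete.

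Existence is the easier half: take $\textbf{N}^\alpha_\beta:=\dot\partial_\beta G^\alpha$, $\textbf{F}^\alpha_{\mu\nu}$ defined by (1.5), $\textbf{C}^\alpha_{\mu\nu}$ by (1.2), and verify the five axioms. Axiom $C_4$ and $C_5$ are immediate from the symmetry of the defining formulas; $C_2$ and $C_3$ are straightforward reversals of the cyclic derivations used for uniqueness; $C_1$ amounts to checking $y^\mu\textbf{F}^\alpha_{\beta\mu}=\dot\partial_\beta G^\alpha$, which is exactly the content of the homogeneity computation above applied in reverse. I expect the main obstacle to be handling the mixed-dependence point cleanly: one must be careful that the Cartan-tensor cancellation under contraction with $y^\mu y^\nu$ really uses only the Finslerian homogeneity of $F$ and not any prior knowledge of $\textbf{N}$, so that the argument is genuinely noncircular.
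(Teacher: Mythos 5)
The paper itself gives no proof of this theorem; it is quoted as the classical existence--uniqueness theorem for the Cartan connection, so your attempt can only be measured against the standard argument, whose architecture you do reproduce: $C_3$ and $C_5$ force $\textbf{C}^\alpha_{\mu\nu}$ to be the $\dot\partial$-Christoffel symbols (1.2); $C_2$ and $C_4$ force $\textbf{F}^\alpha_{\mu\nu}$ to be the $\delta$-Christoffel symbols (1.5) once $\textbf{N}$ is fixed; and your observation that $y^\mu y^\nu\textbf{F}^\alpha_{\mu\nu}=\gamma^\alpha_{\mu\nu}y^\mu y^\nu=2G^\alpha$ independently of $\textbf{N}$, because $y^\sigma C_{\sigma\mu\nu}=0$ and $C$ is totally symmetric, is correct and is indeed the non-circularity point.

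However, the step that pins down $\textbf{N}$ has a genuine gap. From $C_1$ you only extract the fully contracted identity $y^\beta\textbf{N}^\alpha_\beta=2G^\alpha$, and then claim that differentiating in $y^\beta$ plus positive $1$-homogeneity of $\textbf{N}$ yields $\textbf{N}^\alpha_\beta=\dot\partial_\beta G^\alpha$. Differentiation gives $\textbf{N}^\alpha_\beta+y^\mu\dot\partial_\beta\textbf{N}^\alpha_\mu=2\dot\partial_\beta G^\alpha$, and Euler's theorem controls $y^\mu\dot\partial_\mu\textbf{N}^\alpha_\beta$ (contraction on the differentiation index), not $y^\mu\dot\partial_\beta\textbf{N}^\alpha_\mu$; identifying the latter with $\textbf{N}^\alpha_\beta$ would require $\dot\partial_\beta\textbf{N}^\alpha_\mu$ to be symmetric in $(\beta,\mu)$, which is a property of the Barthel connection and cannot be assumed of the unknown $\textbf{N}$. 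Indeed, $y^\beta\textbf{N}^\alpha_\beta=2G^\alpha$ alone does not determine $\textbf{N}$: any $1$-homogeneous correction annihilated by contraction with $y^\beta$ is invisible to it, so at this stage uniqueness is not established. The repair is to use $C_1$ uncontracted: substitute (1.5) into $\textbf{N}^\alpha_\beta=y^\mu\textbf{F}^\alpha_{\beta\mu}$, expand $\delta_\mu=\partial_\mu-\textbf{N}^\sigma_\mu\dot\partial_\sigma$, and use $y^\mu C_{\sigma\mu\eta}=0$ to kill two of the three $\textbf{N}$-terms, leaving $\textbf{N}^\alpha_\beta=\gamma^\alpha_{\beta\mu}\,y^\mu-C^\alpha_{\beta\sigma}\,\bigl(y^\mu\textbf{N}^\sigma_\mu\bigr)$; inserting your identity $y^\mu\textbf{N}^\sigma_\mu=2G^\sigma$ then gives the explicit formula $\textbf{N}^\alpha_\beta=\gamma^\alpha_{\beta\mu}\,y^\mu-2C^\alpha_{\beta\sigma}\,G^\sigma$, which is the classical expression for $\dot\partial_\beta G^\alpha$, and uniqueness follows. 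The same identity is what you currently gloss over in the existence half: verifying $C_1$ for the constructed connection is not the $y^\mu y^\nu$-contraction ``in reverse'' but exactly the computation $y^\mu\Gamma^\alpha_{\beta\mu}=\gamma^\alpha_{\beta\mu}\,y^\mu-2C^\alpha_{\beta\sigma}\,G^\sigma=\dot\partial_\beta G^\alpha$, where the middle equality again uses $y^\sigma C_{\sigma\mu\nu}=0$ and the $2$-homogeneity of $G^\alpha$.
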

 The Cartan connection $C\Gamma$ associated with the Finsler space $(M,\, F)$  is given by
 $$C\Gamma := (\Gamma^{\alpha}_{\mu\nu} , N^{\alpha}_{\mu} , C^{\alpha}_{\mu\nu}),$$
 where $\Gamma^{\alpha}_{\mu\nu}, \, N^{\alpha}_{\mu}$ and $C^{\alpha}_{\mu\nu}$ are defined above by $(\ref{delta Christoffel}), \, (\ref{Nonlinear})$ and $(\ref{dot Christoffel})$, respectively.
\begin{theorem}
On a Finsler manifold (M,\,F), there exists a unique Finsler connection $F\Gamma= (\textbf{F}^{\alpha}_{\mu \nu},\, \textbf{N}^{\alpha}_{\mu},\, \textbf{C}^{\alpha}_{\mu \nu})$ with the following properties:
\begin{description}
  \item[$B_1:$] $y^{\alpha}_{| \, \beta} = 0,$ i.e. $\textbf{N}^{\alpha}_{\beta} =y^{\mu}\,\textbf{F}^{\alpha}_{\beta \mu}$,
  \item[$B_2:$] $F_{ | \alpha} = 0,$ i.e. $\delta_{\alpha}F =0$,
  \item[$B_3:$]  $F \Gamma$ is $h$-symmetric: $\textbf{T}^{\alpha}_{\mu \nu}=0$, i.e. $\textbf{F}^{\alpha}_{\mu \nu} = \textbf{F}^{\alpha}_{\nu \mu}$,
  \item[$B_4:$]  $\textbf{C}^{\alpha}_{\mu \nu} = 0,$
  \item[$B_5:$]  $\textbf{P}^{\alpha}_{\mu \nu} = 0$ i.e. $\textbf{F}^{\alpha}_{\mu \nu} = \dot{\partial}_{\mu} \textbf{N}^\alpha_\nu$.
\end{description}
 This connection is called the \textbf{Berwald} connection.
 \end{theorem}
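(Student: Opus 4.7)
The plan is to establish existence by explicitly producing the Berwald connection and to establish uniqueness by decoding the five axioms $B_{1}$--$B_{5}$ in turn. For existence I would take $(\textbf{F}^{\alpha}_{\mu\nu},\textbf{N}^{\alpha}_{\mu},\textbf{C}^{\alpha}_{\mu\nu}):=(G^{\alpha}_{\mu\nu},N^{\alpha}_{\mu},0)$ with $G^{\alpha}$, $N^{\alpha}_{\mu}$ and $G^{\alpha}_{\mu\nu}$ defined by (1.6)--(1.8), and then verify the five properties. Axiom $B_{4}$ is trivial, $B_{3}$ and $B_{5}$ follow from the symmetry of the mixed operators $\dot{\partial}_{\mu}\dot{\partial}_{\nu}$ acting on $G^{\alpha}$ (so $G^{\alpha}_{\mu\nu}=\dot{\partial}_{\mu}\dot{\partial}_{\nu}G^{\alpha}$ is symmetric and equal to $\dot{\partial}_{\mu}N^{\alpha}_{\nu}$), and $B_{1}$ is Euler's theorem applied to $N^{\alpha}_{\beta}=\dot{\partial}_{\beta}G^{\alpha}$, which is positively $1$-homogeneous because $G^{\alpha}$ is positively $2$-homogeneous. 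Axiom $B_{2}$, i.e. $\delta_{\alpha}F=0$, is the classical horizontal constancy of $F$ along the Cartan (Barthel) nonlinear connection; it is verified by plugging the explicit formula (1.6) for the canonical spray into $\partial_{\alpha}F^{2}-2 N^{\mu}_{\alpha}\ell_{\mu}$ (where $\ell_{\mu}:=g_{\mu\nu}y^{\nu}$) and collecting terms via the definition of $\gamma^{\alpha}_{\mu\nu}$.

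For uniqueness, let $(\textbf{F},\textbf{N},\textbf{C})$ satisfy $B_{1}$--$B_{5}$. Axiom $B_{4}$ fixes $\textbf{C}\equiv 0$, and $B_{5}$ reduces everything to determining $\textbf{N}^{\alpha}_{\beta}$, since $\textbf{F}^{\alpha}_{\mu\nu}=\dot{\partial}_{\mu}\textbf{N}^{\alpha}_{\nu}$. Combining $B_{3}$ and $B_{5}$ produces the $y$-closedness relation $\dot{\partial}_{\mu}\textbf{N}^{\alpha}_{\nu}=\dot{\partial}_{\nu}\textbf{N}^{\alpha}_{\mu}$; substituting this into $B_{1}$ rewrites it as $\textbf{N}^{\alpha}_{\beta}=y^{\mu}\dot{\partial}_{\mu}\textbf{N}^{\alpha}_{\beta}$, so by Euler's theorem $\textbf{N}^{\alpha}_{\beta}$ is positively $1$-homogeneous in $y$. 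Setting $H^{\alpha}:=\tfrac{1}{2}y^{\mu}\textbf{N}^{\alpha}_{\mu}$ and applying the same closedness once more yields $\textbf{N}^{\alpha}_{\beta}=\dot{\partial}_{\beta}H^{\alpha}$, so that $\textbf{N}$ arises from a semispray $H^{\alpha}$ in exactly the way $N$ arises from $G^{\alpha}$.

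The main obstacle is then to identify $H^{\alpha}$ with the canonical spray $G^{\alpha}$, and this is where $B_{2}$ must be fully exploited. Rewriting $B_{2}$ as $\partial_{\alpha}F^{2}=2\textbf{N}^{\mu}_{\alpha}\ell_{\mu}$ and applying $\dot{\partial}_{\sigma}$ (using that $\dot{\partial}_{\sigma}\ell_{\mu}=g_{\mu\sigma}$, since the Cartan tensor is $y$-orthogonal) gives the auxiliary identity $\partial_{\eta}\ell_{\sigma}=\textbf{F}^{\mu}_{\eta\sigma}\ell_{\mu}+\textbf{N}^{\mu}_{\eta}g_{\mu\sigma}$. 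Swapping $\eta\leftrightarrow\sigma$ with the help of $B_{3}$, contracting with $y^{\sigma}$, and invoking $B_{1}$ and $B_{2}$ to reduce the right-hand side yields
\[
y^{\sigma}y^{\nu}\partial_{\sigma}g_{\eta\nu}=\tfrac{1}{2}\partial_{\eta}F^{2}+2H^{\mu}g_{\mu\eta}.
\]
Substituting $\partial_{\eta}F^{2}=y^{\sigma}y^{\nu}\partial_{\eta}g_{\sigma\nu}$ and symmetrising in $\sigma,\nu$ turns the right-hand side into the formal Christoffel expression, giving $2H^{\mu}g_{\mu\eta}=\gamma^{\mu}_{\sigma\nu}g_{\mu\eta}y^{\sigma}y^{\nu}=2G^{\mu}g_{\mu\eta}$. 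Non-degeneracy of $g_{\mu\nu}$ then forces $H^{\alpha}=G^{\alpha}$, whence $\textbf{N}^{\alpha}_{\beta}=N^{\alpha}_{\beta}$ and $\textbf{F}^{\alpha}_{\mu\nu}=G^{\alpha}_{\mu\nu}$, completing the uniqueness argument.
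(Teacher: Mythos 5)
Your proposal is correct, but note that the paper itself offers no proof of this statement: Theorem 1.3 is quoted as a classical existence--uniqueness result from the Finsler literature (the survey references at the start of Section 1), and the paper only records the explicit form $B\Gamma=(G^{\alpha}_{\mu\nu},N^{\alpha}_{\mu},0)$ afterwards. Your argument is essentially the standard textbook one and it goes through: existence by exhibiting exactly that triple and checking $B_1$--$B_5$, and uniqueness by showing $B_3$--$B_5$ force $\textbf{F}^{\alpha}_{\mu\nu}=\dot{\partial}_{\mu}\textbf{N}^{\alpha}_{\nu}$ with $\dot{\partial}_{\mu}\textbf{N}^{\alpha}_{\nu}=\dot{\partial}_{\nu}\textbf{N}^{\alpha}_{\mu}$, so that $B_1$ and the converse of Euler's theorem make $\textbf{N}^{\alpha}_{\beta}$ positively $1$-homogeneous and equal to $\dot{\partial}_{\beta}H^{\alpha}$ with $H^{\alpha}=\tfrac12 y^{\mu}\textbf{N}^{\alpha}_{\mu}$, after which $B_2$ pins $H^{\alpha}$ down to the canonical spray; I checked your chain $\partial_{\eta}\ell_{\sigma}=\textbf{F}^{\mu}_{\eta\sigma}\ell_{\mu}+\textbf{N}^{\mu}_{\eta}g_{\mu\sigma}$, contraction with $y^{\sigma}$, and the symmetrization yielding $2H_{\eta}=\gamma_{\eta\sigma\nu}y^{\sigma}y^{\nu}=2G_{\eta}$, and the index bookkeeping is consistent with the paper's conventions $\textbf{N}^{\alpha}_{\beta}=y^{\mu}\textbf{F}^{\alpha}_{\beta\mu}$ and $\textbf{F}^{\alpha}_{\mu\nu}=\dot{\partial}_{\mu}\textbf{N}^{\alpha}_{\nu}$. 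The only thin spot is the existence half of $B_2$, which you leave as ``collecting terms''; the needed identity is $N^{\mu}_{\alpha}\,y_{\mu}=\tfrac12\,\partial_{\alpha}F^{2}$ (equivalently $\delta_{\alpha}F^{2}=0$), which does follow from $G_{\alpha}=\tfrac12\gamma_{\alpha\mu\nu}y^{\mu}y^{\nu}$ together with $\dot{\partial}_{\alpha}y_{\mu}=g_{\mu\alpha}$ (Cartan tensor annihilated by $y$), so the sketch is sound, but spelling out that identity would make the existence part as airtight as your uniqueness part.
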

 The Berwald connection $B \Gamma$ associated with the Finsler space $(M,\, F)$ is given by
$$ B \Gamma = (G^{\alpha}_{\mu \nu} , N^{\alpha}_{\mu } , 0),$$
where $G^{\alpha}_{\mu \nu}$ and $N^{\alpha}_{\mu}$ are defined by $(\ref{berwald linear})$ and $(\ref{Nonlinear})$, respectively.


\Section{Survey on generalized absolute parallelism geometry}
\hspace{12pt}
We here give a short survey on generalized absolute parallelism geometry. For more details we refer to \cite{GAP}.
\begin{definition} An $n$-dimensional manifold $M$ is called d-parallelizable,
or generalized absolute parallelism space (GAP-space), if  the
pull-back bundle $\pi^{-1}(TM)$ admits  $n$ global
independent sections ($\pi$-vector fields)\
$\undersym{\lambda}{i}(x,\,y)$, $i = 1, ..., n$.
\end{definition}
The components of the vector field $\ \undersym{\lambda}{i}(x,\,y)$ in the natural basis are denoted by $\ {\undersym{\lambda}{i}}^\alpha, \, \alpha=1,\cdots,n$. Latin indices are numbering (mesh) indices and the Greek indices are coordinate (world) indices.
The Einstein summation convention is applied on both Latin (mesh) and Greek
(world) indices, where all Latin indices are written in a lower position and in most cases, when mesh indices
appear they will be in pairs meaning summation.

\vspace{5pt}
If \ $\undersym{\lambda}{i} := (\,\undersym{\lambda}{i}^{\alpha}(x,\, y))$; \,$i,\alpha = 1, ..., n$,
then
\begin{equation}\undersym{\lambda}{i}^{\alpha} \ \undersym{\lambda}{i}_{\beta} = \delta^{\alpha}_{\beta}, \ \ \
\undersym{\lambda}{i}^{\alpha} \ \undersym{\lambda}{j}_{\alpha} =
\delta_{ij},\vspace{-5pt}\end{equation} where $( \
\undersym{\lambda}{i}\,_{\alpha})$ denotes the inverse of the matrix
$( \ \undersym{\lambda}{i}^{\alpha})$. \vspace{7pt}

\begin{theorem} A GAP-space is a generalized Lagrange space.
\end{theorem}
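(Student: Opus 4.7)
The plan is to produce the desired generalized Lagrange metric as the natural ``sum of squares'' metric built from the parallelization. Specifically, I would define
\[
g_{\alpha\beta}(x,y) \;:=\; \undersym{\lambda}{i}_{\alpha}(x,y)\,\undersym{\lambda}{i}_{\beta}(x,y),
\]
with summation over the mesh index $i$, and then check the three conditions required by Definition~1.1: that $g_{\alpha\beta}$ is a symmetric $(0,2)$ tensor field on the appropriate bundle, and that it is non-degenerate.

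Symmetry is immediate from the definition, since swapping $\alpha$ and $\beta$ leaves the product $\undersym{\lambda}{i}_{\alpha}\undersym{\lambda}{i}_{\beta}$ unchanged. For the tensorial character, I would invoke the fact that each $\undersym{\lambda}{i}$ is by hypothesis a $\pi$-vector field (a section of the pull-back bundle $\pi^{-1}(TM)$), so its dual components $\undersym{\lambda}{i}_{\alpha}$ transform as the components of a $\pi$-form under the coordinate change $x^\mu \mapsto x^{\mu'}$ by multiplication with $p^{\alpha}_{\alpha'}$; because the mesh index $i$ is summed and carries no world transformation, the product $\undersym{\lambda}{i}_{\alpha}\undersym{\lambda}{i}_{\beta}$ acquires exactly the factor $p^{\alpha}_{\alpha'}p^{\beta}_{\beta'}$, which is the transformation law of a $(0,2)$ tensor on $\pi^{-1}(TM)$.

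For non-degeneracy, I would use the relations displayed just before the theorem, namely $\undersym{\lambda}{i}^{\alpha}\undersym{\lambda}{i}_{\beta} = \delta^{\alpha}_{\beta}$ and $\undersym{\lambda}{i}^{\alpha}\undersym{\lambda}{j}_{\alpha} = \delta_{ij}$. These identities say that the $n\times n$ matrix $\Lambda := (\undersym{\lambda}{i}_{\alpha})$ (rows indexed by $i$, columns by $\alpha$) is invertible with inverse $\Lambda^{-1} = (\undersym{\lambda}{i}^{\alpha})$. But $g_{\alpha\beta} = (\Lambda^{T}\Lambda)_{\alpha\beta}$, hence $\det(g_{\alpha\beta}) = (\det \Lambda)^{2} \neq 0$ at every point of $TM$, giving non-degeneracy. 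An inverse metric $g^{\alpha\beta} = \undersym{\lambda}{i}^{\alpha}\undersym{\lambda}{i}^{\beta}$ can also be written down explicitly if desired, as a sanity check.

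None of the steps is really an obstacle here; the argument is essentially a repackaging of the dual-frame identities. The only place where one has to be slightly careful is in verifying that the $(x,y)$-dependence is compatible with the definition of generalized Lagrange space: since the $\undersym{\lambda}{i}$ are $\pi$-vector fields defined on $\pi^{-1}(TM)$, the resulting $g$ depends on both $x$ and $y$, but transforms tensorially under changes of chart on $M$ (world indices only), which is precisely what Definition~1.1 requires of a $(0,2)$ tensor field on $TM$ in this setting.
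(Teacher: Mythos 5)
Your proposal is correct and matches the paper's own argument: the paper likewise defines $g_{\mu\nu}(x,y):=\undersym{\lambda}{i}_{\mu}\,\undersym{\lambda}{i}_{\nu}$ and establishes non-degeneracy by exhibiting the explicit inverse $g^{\mu\nu}=\undersym{\lambda}{i}^{\mu}\,\undersym{\lambda}{i}^{\nu}$ via the dual-frame identities. Your determinant argument and the tensoriality check are just a slightly more detailed write-up of the same route.
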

In fact, the covariant tensor field $g_{\mu\nu}(x, y)$ of order $2$
given by\vspace{-5pt}
\begin{equation}g_{\mu\nu}(x, y): =
{\undersym{\lambda}{i}_{\mu}}(x,\,y) \
{\undersym{\lambda}{i}_{\nu}}(x,\,y),\vspace{-5pt}\end{equation} defines a
metric in the pull-back bundle $\pi^{-1}(TM)$ with inverse given
by\vspace{-5pt}
\begin{equation}g^{\mu\nu}(x, y) ={\undersym{\lambda}{i}^{\mu}}(x,\,y) \ {\undersym{\lambda}{i}^{\nu}}(x,\,y).
\vspace{-5pt}\end{equation}
\begin{definition} Let $(M, g)$ be a generalized Lagrange space equipped
with a nonlinear connection $N^{\alpha}_{\mu}$. Then a Finsler connection $D = (F^{\alpha}_{\mu\nu},N^{\alpha}_{\mu},
C^{\alpha}_{\mu\nu})$ is said to be metrical with respect to the metric $g$ if
\begin{equation}g_{\mu\nu|\alpha} = 0, \ \ \ g_{\mu\nu||\alpha} = 0.
\end{equation}
\end{definition}
\vspace{6pt}
The following result was proved by R. Miron \cite{reiman}. It
guarantees the existence of a unique {\it torsion-free} metrical
 Finsler connection on any generalized Lagrange space equipped with a
nonlinear connection.
\begin{theorem}\label{Miron} Let $(M, g(x, y))$ be a generalized Lagrange space.
Let $N^{\alpha}_{\mu}$ be a given nonlinear connection on $TM$.
Then there exists a unique metrical Finsler connection $\overcirc{D}
= (\Ahmed{\Gamma}{\circ}\!^{\alpha}_{\mu\nu},N^{\alpha}_{\mu},
\Ahmed{C}{\circ}\!^{\alpha}_{\mu\nu})$ such that \
$\Ahmed{T}{\circ}^{\alpha}_{\mu\nu}: =
 \ \Ahmed{\Gamma}{\circ}^{\alpha}_{\mu\nu} - \ \Ahmed{\Gamma}{\circ}^{\alpha}_{\nu\mu} = 0$
and \ $\Ahmed{S}{\circ}^{\alpha}_{\mu\nu}: = \
\Ahmed{C}{\circ}^{\alpha}_{\mu\nu} -
 \ \Ahmed{C}{\circ}^{\alpha}_{\nu\mu} = 0$.\\
 \par
This Finsler connection is given by $N^{\alpha}_{\mu}$ and the generalized Christoffel symbols:
\vspace{-0.15cm}
\begin{equation}\Ahmed{\Gamma}{\circ}^{\alpha}_{\mu\nu} = \frac{1}{2}\,
g^{\alpha\epsilon}(\delta_{\mu} g_{\nu\epsilon} + \delta_{\nu}
g_{\mu\epsilon} - \delta_{\epsilon} g_{\mu\nu}),\end{equation}
\begin{equation}\Ahmed{C}{\circ}^{\alpha}_{\mu\nu} = \frac{1}{2}\,
g^{\alpha\epsilon}(\dot{\partial}_{\mu} g_{\nu\epsilon} +
\dot{\partial}_{\nu} g_{\mu\epsilon} - \dot{\partial}_{\epsilon}
g_{\mu\nu}).\end{equation}
This connection will be referred to as the \textbf{Miron} connection.
\end{theorem}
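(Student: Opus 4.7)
The plan is to mimic the classical Levi-Civita argument twice, once in the horizontal sector to pin down $\overcirc{\Gamma}^{\alpha}_{\mu\nu}$ and once in the vertical sector to pin down $\overcirc{C}^{\alpha}_{\mu\nu}$. Uniqueness and the explicit formulas come together from this algebraic manipulation, so I would do uniqueness first and then verify existence by checking that the formulas actually define a Finsler connection.

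For uniqueness of $\overcirc{\Gamma}^{\alpha}_{\mu\nu}$: since the nonlinear connection $N^{\alpha}_{\mu}$ is given, the horizontal derivatives $\delta_{\mu}$ are fixed. The $h$-metric axiom, once written out, gives
\[
\delta_{\alpha} g_{\mu\nu} = g_{\nu\beta}\,\Gamma^{\beta}_{\mu\alpha} + g_{\mu\beta}\,\Gamma^{\beta}_{\nu\alpha}.
\]
Cyclically permuting $(\alpha,\mu,\nu)$ to obtain the analogous identities for $\delta_{\mu}g_{\nu\alpha}$ and $\delta_{\nu}g_{\alpha\mu}$, then forming the combination (first) $+$ (second) $-$ (third) and invoking $h$-symmetry $\Gamma^{\beta}_{\mu\alpha}=\Gamma^{\beta}_{\alpha\mu}$ to collapse four of the six terms, one is left with $2\,g_{\nu\beta}\,\Gamma^{\beta}_{\alpha\mu}$ on the right. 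Contracting with $\tfrac{1}{2}g^{\nu\epsilon}$ gives the stated formula for $\overcirc{\Gamma}^{\alpha}_{\mu\nu}$. The same procedure, carried out with $\dot{\partial}_{\mu}$ in place of $\delta_{\mu}$ and using the $v$-metric and $v$-symmetry axioms, yields the formula for $\overcirc{C}^{\alpha}_{\mu\nu}$. This also proves uniqueness: any metrical torsion-free Finsler connection extending $N^{\alpha}_{\mu}$ must satisfy these identities.

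For existence, define $\overcirc{\Gamma}^{\alpha}_{\mu\nu}$ and $\overcirc{C}^{\alpha}_{\mu\nu}$ by the displayed formulas. The verification splits into two checks. First, the metric and symmetry properties are immediate: the combination is manifestly symmetric in the lower indices, and by construction the cyclic-sum manipulation is reversible, so plugging the formulas back produces $g_{\mu\nu|\alpha}=0$ and $g_{\mu\nu||\alpha}=0$. Second, one must verify the correct transformation laws under a change of chart. For $\overcirc{C}^{\alpha}_{\mu\nu}$ this is painless: $g_{\mu\nu}$ is a tensor on $\pi^{-1}(TM)$, $\dot{\partial}_{\mu}$ acts tensorially in the vertical direction, and contracting with $g^{\alpha\epsilon}$ yields a $(1,2)$-tensor as required.

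The main obstacle is the horizontal transformation law, which is where the inhomogeneous term in Definition 1.5 must be produced. I would show that $\delta_{\mu}$ behaves tensorially on scalars but picks up an inhomogeneous piece on tensor components; combined across the three permuted terms and contracted with $g^{\alpha\epsilon}$, these pieces must assemble precisely into $p^{\alpha'}_{\mu}\,p^{\mu}_{\beta'\sigma'}\,y^{\sigma'}$ (or the linear-connection analogue thereof). This is a bookkeeping argument that rests on the transformation law (1.1) of $N^{\alpha}_{\mu}$ and the fact that the $\delta_{\mu}$'s transform as a linear combination of coordinate frames; once it is carried out, $\overcirc{\Gamma}^{\alpha}_{\mu\nu}$ is confirmed to transform as the coefficients of a linear connection. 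With both coefficient families in hand, the triple $(\overcirc{\Gamma}^{\alpha}_{\mu\nu},\,N^{\alpha}_{\mu},\,\overcirc{C}^{\alpha}_{\mu\nu})$ is a Finsler connection, metrical by construction and torsion-free by symmetry, completing the proof.
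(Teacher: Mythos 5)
A point of comparison first: the paper does not prove this theorem at all --- it is stated as Miron's result, quoted from \cite{reiman} --- so there is no in-paper argument to measure your proposal against. What you propose is the standard proof from the cited literature: the Christoffel/Levi-Civita trick run twice, once in the $\delta_{\mu}$-sector and once in the $\dot{\partial}_{\mu}$-sector. Your uniqueness computation is correct ($h$-metricity plus $h$-symmetry collapses the cyclic combination to $2\,g_{\nu\beta}\,\Gamma^{\beta}_{\alpha\mu}$, and likewise vertically), and so is the reverse substitution giving $g_{\mu\nu|\alpha}=0=g_{\mu\nu||\alpha}$ for the defined coefficients. Two details in your existence step should be tightened. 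First, the inhomogeneous term you quote, $p^{\alpha'}_{\mu}\,p^{\mu}_{\beta'\sigma'}\,y^{\sigma'}$, is the one in the transformation law (1.1) of a \emph{nonlinear} connection; what $\overcirc{\Gamma}{}^{\alpha}_{\mu\nu}$ must acquire is the linear-connection term $p^{\alpha'}_{\alpha}\,p^{\alpha}_{\mu'\nu'}$, with no $y$ --- your parenthetical ``linear-connection analogue'' is the statement you actually need. Second, the deferred bookkeeping rests on one clean fact worth stating explicitly: precisely because $N^{\alpha}_{\mu}$ obeys (1.1), the adapted frame transforms homogeneously, $\delta_{\mu'}=p^{\mu}_{\mu'}\,\delta_{\mu}$; hence the non-tensorial part of $\delta_{\mu}g_{\nu\epsilon}$ consists only of terms involving the second derivatives $p^{\nu}_{\mu'\nu'}$ (the $p$'s depend on $x$ only), and in the combination $\delta_{\mu}g_{\nu\epsilon}+\delta_{\nu}g_{\mu\epsilon}-\delta_{\epsilon}g_{\mu\nu}$ these cancel in pairs except for $2\,p^{\nu}_{\mu'\nu'}\,p^{\epsilon}_{\epsilon'}\,g_{\nu\epsilon}$, which after contraction with $\tfrac{1}{2}g^{\alpha'\epsilon'}$ yields exactly $p^{\alpha'}_{\nu}\,p^{\nu}_{\mu'\nu'}$, i.e.\ the correct law for linear connection coefficients; meanwhile $\dot{\partial}_{\mu}g_{\nu\epsilon}$ is already tensorial for the same reason, so $\overcirc{C}{}^{\alpha}_{\mu\nu}$ is a genuine $(1,2)$-tensor, as you say. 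With these two points made explicit, your argument is complete and is essentially the proof given in the source the paper cites.
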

\vspace{-0.1cm}
We define another Finsler connection $D = (\Gamma^{\alpha}_{\mu\nu}, N^{\alpha}_{\mu}, C^{\alpha}_{\mu\nu})$ in terms of
$\,\overcirc{D}$ by:
\begin{equation}
\Gamma^{\alpha}_{\mu\nu}: = \ \Ahmed{\Gamma}{\circ}^{\alpha}_{\mu\nu} +\,
\undersym{\lambda}{i}^{\alpha} \ \undersym{\lambda}{i}_{\mu{o\atop{|}}\nu}, \ \ \ \ N^{\alpha}_{\mu}= N^{\alpha}_{\mu}, \ \ \ \ C^{\alpha}_{\mu\nu}: = \ \Ahmed{C}{\circ}^{\alpha}_{\mu\nu}+ \, \undersym{\lambda}{i}^{\alpha} \ \undersym{\lambda}{i}_{\mu{o\atop{||}}\nu},\vspace{-0.25cm}\end{equation}
where \lq\lq\,$\Ahmed{|}{\circ}\,$\,\rq\rq\, and
\lq\lq\,$\Ahmed{||}{\circ}\,$\,\rq\rq\, denote the $h$- and $v$-covariant
derivatives with respect to the Miron connection
$\,\Ahmed{D}{\circ}$ . The Finsler connection $D$ is called the \textbf{canonical} connection.
\par
 If \lq\lq\,$|$\,\rq\rq\, and
\lq\lq\,$||$\,\rq\rq\, denote the $h$- and $v$-covariant derivatives
with respect to the canonical connection $D$, then one can show that
\begin{equation}{\lambda}^{\alpha}\!\, _{|\mu} = 0, \ \ \
{\lambda}^{\alpha}\!\,_{||\mu} = 0.
\end{equation}
 In analogy to the classical AP-space \cite{AMR}, the above relations are referred to as the generalized AP-condition.\\
 \par

The canonical connection can be explicitly expressed in terms of the ${\lambda}$'s in the form
\begin{equation}\label{canonical GAP}
\Gamma^{\alpha}_{\mu\nu} = \undersym{\lambda}{i}^{\alpha}
(\delta_{\nu} \ \undersym{\lambda}{i}_{\mu}), \ \ \
{C}^{\alpha}_{\mu\nu} =
\undersym{\lambda}{i}^{\alpha}(\dot{\partial}_{\nu} \
\undersym{\lambda}{i}_{\mu}).
\end{equation}
\par
We now list some important tensors which prove useful later on.
Let
\begin{equation} T^{\alpha}_{\mu\nu} :=
{\Gamma}^{\alpha}_{\mu\nu} - {\Gamma}^{\alpha}_{\nu\mu}
, \ \ \ \ S^{\alpha}_{\mu\nu}: = {C}^{\alpha}_{\mu\nu} - {C}^{\alpha}_{\nu\mu}.
\end{equation}
Then $T^{\alpha}_{\mu\nu}$ and $S^{\alpha}_{\mu\nu}$ are referred to as the $(h)h$- and $(v)v$-torsion tensor of the canonical connection.\\
\par
Let
\begin{equation}\label{contortion} A^{\alpha}_{\mu\nu}: = \
\undersym{\lambda}{i}^{\alpha} \ \undersym{\lambda}{i}_{\mu{o \atop
|}\nu} = {\Gamma}^{\alpha}_{\mu\nu} -  \
\Ahmed{\Gamma}{\circ}^{\alpha}_{\mu\nu}, \ \ \ \ B^{\alpha}_{\mu\nu}: =
\ \undersym{\lambda}{i}^{\alpha} \ \undersym{\lambda}{i}_{\mu{o\atop
||}\nu} = {C}^{\alpha}_{\mu\nu} -
\Ahmed{C}{\circ}^{\alpha}_{\mu\nu}.
\end{equation}
We refer to $A^{\alpha}_{\mu\nu}$
and $B^{\alpha}_{\mu\nu}$ as the $h$- and $v$-contortion tensors,
respectively.\\
\par
We have the following important relations between the torsion and the contortion tensors:
\begin{equation} T^{\alpha}_{\mu\nu} = A^{\alpha}_{\mu\nu} - A^{\alpha}_{\nu\mu}, \ \ \ \ S^{\alpha}_{\mu\nu}=
B^{\alpha}_{\mu\nu} - B^{\alpha}_{\nu\mu}.
\end{equation}

\par
In addition to the Miron and the canonical connections, our space admits other natural
Finsler connection. In analogy to the classical AP-space \cite{AMR}, we define the
\textbf{dual} Finsler connection \\ $\widetilde{D} = (\widetilde{\Gamma}^{\alpha}_{\mu\nu},\widetilde{N}^{\alpha}_{\mu},\widetilde{C}^{\alpha}_{\mu\nu})$ by
\begin{equation}\label{dual GAP}
\widetilde{\Gamma}^{\alpha}_{\mu\nu} :=
 \Gamma^{\alpha}_{\nu\mu}, \ \ \widetilde{N}^{\alpha}_{\mu} = N^{\alpha}_{\mu} ,  \ \
\widetilde{C}^{\alpha}_{\mu\nu} := C^{\alpha}_{\nu\mu}
\end{equation}
The $h$- and $v$-covariant derivatives with
respect to the dual connection $\widetilde{D}$ will be denoted by
\lq\lq\,$\widetilde{|}\,"$ and \lq\lq\,$\widetilde{||}\,"$.
\par
Summing up, in a GAP-space we have at least three naturel Finsler connections, namely the Miron, canonical and dual connections. The first two connections are metric while the third is not. The first one is symmetric while the others are not. The second one is flat while the others are not. The nonlinear connection $N^{\alpha}_{\beta}$ of these three Finsler connections is the same. For a fixed nonlinear connection, we refer to it as the nonlinear connection of the GAP-space.

\Section{Finslerized Parallelizable space}
\hspace{12pt}
In this section, we construct a Finsler structure on a GAP-space. We refer to this new space as a Finslerized Parallelizable  space or simply an FP-space. An FP-space combines within its geometric structure the richness of Finsler geometry and the richness and simplicity of the GAP-geometry. Moreover, in an FP-space both   geometric structures are closely interrelated and intertwined. This follows from the fact that the two metrics of the two geometric structures coincide (Theorem $3.2$).\\
 \par
 The next theorem plays the key role in the construction of a Finsler structure on a GAP-space.
\begin{theorem}
Let $(M, \,\, \undersym{\lambda}{i}(x,\,y))$ be a GAP-space.
\begin{description}
  \item[(a)] If $g_{\mu \nu}(x,\,y) := \undersym{\lambda}{i}_{\mu}(x,\,y) \,\, \undersym{\lambda}{i}_{\nu}(x,\,y)$,
  then $(M,\,g)$ is a generalized Lagrange space.
  \item[(b)] If $L(x,\,y) := g_{\mu \nu}(x,\,y) \, y^{\mu}\, y^{\nu}$
  such that $y^{\mu}\, \dot{\partial}_{\alpha} g_{\mu \nu} = 0,$
  then $(M,\,L(x,\,y))$ is \;a Lagrange space.
  \item[(c)] If $ {F}(x,\,y) := \sqrt{g_{\mu \nu}(x,\,y)\, y^{\mu}\, y^{\nu}}$ \footnote{We note that $g_{\mu \nu}(x,\,y)\, y^{\mu}\, y^{\nu}=\,  \undersym{\lambda}{i}_{\mu} \,\, \undersym{\lambda}{i}_{\nu}\, y^{\mu}\, y^{\nu} = \,(\undersym{\lambda}{i}_{\mu} \, y^{\mu})\, ( \undersym{\lambda}{i}_{\nu}\,y^{\nu}) = (\,\undersym{\lambda}{i}_{\mu} \, y^{\mu})^{2} \geq 0 $.} such that
        \begin{enumerate}
      \item The tensor $\dot{\partial}_{\alpha} g_{\mu \nu} \text{ is totally symmetric},$
       \item The parallelization vector fields $\,\undersym{\lambda}{i}(x,\,y)$ are positively homogenous of degree zero in y,
\end{enumerate}
\end{description}
\hspace{21pt}then $(M,\, F(x,\,y))$ is a Finsler space.
\footnote{A result similar to \textbf{(c)} is found in \cite{Wanas 2009}, but in a completely different context and under more restrictive conditions.}
\end{theorem}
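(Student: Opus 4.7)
The plan is to verify the defining conditions for each of the three ambient categories directly. Part (a) is essentially the content of the earlier Theorem 2.2: symmetry of $g_{\mu\nu}$ is immediate from its definition, while non-degeneracy is witnessed by the explicit inverse $g^{\mu\nu} = \undersym{\lambda}{i}^\mu\,\undersym{\lambda}{i}^\nu$, which exists because the frame matrix $(\undersym{\lambda}{i}^\alpha)$ is invertible by hypothesis; the smoothness requirement is inherited from that of the $\lambda$'s. So the real work lies in (b) and (c), both of which ultimately rest on a single computation: showing that the $y$-Hessian of the quadratic form $g_{\mu\nu}(x,y)\,y^\mu y^\nu$ equals $2\,g_{\alpha\beta}$ under the respective hypotheses.

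For part (b), I would differentiate $L=g_{\mu\nu}y^\mu y^\nu$ once to get $\dot{\partial}_\alpha L = (\dot{\partial}_\alpha g_{\mu\nu})\,y^\mu y^\nu + 2\,g_{\alpha\nu}\,y^\nu$. The hypothesis $y^\mu\,\dot{\partial}_\alpha g_{\mu\nu}=0$ kills the first term outright (the $y^\nu$-contraction also vanishes by the $\mu\leftrightarrow\nu$ symmetry of $g_{\mu\nu}$), leaving $\dot{\partial}_\alpha L = 2\,g_{\alpha\nu}\,y^\nu$. A second differentiation yields $\dot{\partial}_\beta\dot{\partial}_\alpha L = 2\,(\dot{\partial}_\beta g_{\alpha\nu})\,y^\nu + 2\,g_{\alpha\beta}$; relabeling indices and invoking the same hypothesis (together with the symmetry of $g$) eliminates the first term, so that $\tfrac{1}{2}\,\dot{\partial}_\beta\dot{\partial}_\alpha L = g_{\alpha\beta}$, which is non-degenerate by part (a). Combined with the inherited smoothness, this verifies Definition 1.2.

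For part (c), I would verify the four conditions of Definition 1.3 in turn. The identity $F^2 = \sum_i(\undersym{\lambda}{i}_\mu y^\mu)^2$ noted in the statement's footnote immediately gives $F\ge 0$ with $F=0\iff y=0$ (using invertibility of the frame matrix), the required continuity on $TM$ and smoothness on $\tau M$, and---via hypothesis (c.2)---the positive homogeneity $F(x,ay)=aF(x,y)$ for $a>0$, since homogeneity of degree zero of the $\lambda$'s propagates to $g_{\mu\nu}$ and hence gives $F^2(x,ay)=a^2F^2(x,y)$. For the positive-definite Hessian, the crucial observation is that hypothesis (c.2) together with Euler's theorem gives $y^\alpha\dot{\partial}_\alpha g_{\mu\nu}=0$, and under the total symmetry of $\dot{\partial}_\alpha g_{\mu\nu}$ in (c.1) this upgrades to $y^\mu\dot{\partial}_\alpha g_{\mu\nu}=0$---exactly the pointwise identity assumed by hand in part (b). The calculation of part (b) then produces Hessian $=g_{\alpha\beta}$, which is positive definite because $g_{\alpha\beta}v^\alpha v^\beta=\sum_i(\undersym{\lambda}{i}_\alpha v^\alpha)^2>0$ for $v\ne 0$ by invertibility of the frame.

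The main subtlety of the whole argument is precisely this last deduction: recognizing that the two conditions in (c) together encode, via Euler's theorem and index permutation, exactly the single condition imposed in (b), and carefully tracking which slot of $\dot{\partial}_\alpha g_{\mu\nu}$ is being contracted with $y$ when the total symmetry is invoked. Once this is done, both (b) and (c) reduce to the same short Hessian computation, and positive-definiteness/non-degeneracy of the result is handled uniformly by the frame decomposition of $g_{\alpha\beta}$.
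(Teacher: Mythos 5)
Your proposal is correct and follows essentially the same route as the paper: the same twice-differentiation of $g_{\mu\nu}y^\mu y^\nu$ for (b), and for (c) the same use of degree-zero homogeneity plus Euler's theorem and the total symmetry of $\dot{\partial}_\alpha g_{\mu\nu}$ to reduce to the computation of (b). You merely spell out a couple of details the paper leaves implicit (positive definiteness of $g_{\alpha\beta}$ and the implication $F=0\Rightarrow y=0$ via invertibility of the frame), which is fine.
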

\noindent In this case, we refer to $(M,\,\, \undersym{\lambda}{i}(x,\,y),\, {F}(x,\,y))$ as a Finslerized Absolute Parallelism space or a Finslerized Parallelizable  space (FP-space).
\begin{proof}
\begin{description}
 \item[]
  \item[(a)] See Theorem $2.2$.
  \item[(b)] \label{Lagrange} By hypothesis, we have $y^\beta \dot{\partial}_{\nu} g_{\alpha \beta} = 0$. Consequently, $\dot{\partial}_{\nu} L = (\dot{\partial}_{\nu} y^{\alpha})\, y^\beta \, g_{\alpha \beta}+ y^\alpha \, (\dot{\partial}_{\nu} y^{\beta})\, g_{\alpha \beta}+  y^{\alpha}\, y^\beta \,\dot{\partial}_{\nu} g_{\alpha \beta} = \delta^\alpha_\nu \, y^\beta \, g_{\alpha \beta}+ y^\alpha \, \delta^\beta_\nu \, g_{\alpha \beta} = 2 \, y^\alpha \, g_{\nu \alpha}.$
 Again, using $y^\beta \dot{\partial}_{\nu} g_{\alpha \beta} = 0$, we conclude that
   $$ \frac{1}{2}\dot{\partial}_{\mu} \dot{\partial}_{\nu} L = (\dot{\partial}_{\mu} y^{\alpha})\, g_{\nu \alpha} + y^{\alpha}\,(\dot{\partial}_{\mu}g_{\nu \alpha}) = \delta^\alpha_\mu \, g_{\nu \alpha}= g_{\mu \nu}.$$
  \item[(c)]  We show that the four conditions defining a Finsler structure are satisfied:
It is clear that $F$ is $C^{\infty}$ on {\fontsize{18}{10}{$\tau$}}$M$ and $C^{0}$ on $TM$. On the other hand, $ {F}(x,\,y) \geq 0$ as the square root always give non-negative values and $F(x,0) =0$.
Moreover, if $\alpha$ is a positive number then, noting that $g_{\mu \nu}$ is positively homogenous of degree zero
   in $y$, $F(x, \alpha y)= \sqrt{g_{\mu \nu}(x,\alpha y)\, (\alpha y)^{\mu}\, (\alpha y)^{\nu}} =  \sqrt{ \alpha^{0}\, g_{\mu \nu}(x, y) \alpha^{2}\, y^{\mu}\, y^{\nu}} = \alpha \sqrt{ g_{\mu \nu}(x, y) \, y^{\mu} \,y^{\nu}} = \alpha\, {F}(x,\,y).$ Finally, the total symmetry of the tensor  $\dot{\partial}_{\mu} g_{\alpha \beta}$ implies that $y^{\alpha}\, \dot{\partial}_{\mu} g_{\alpha \beta}= y^{\alpha}\, \dot{\partial}_{\alpha} g_{\mu \beta} = 0$, by Euler theorem on homogenous functions.
Thus, similar to the proof of $(b)$, we get $$\frac{1}{2}
\dot{\partial_{\mu}} {\dot{\partial_{\nu}}} F^{2} = g_{\mu \nu}$$
\end{description}
\vspace{-0.8cm}
\end{proof}
As a consequence of the above theorem, we have the following simple and important result.
\begin{theorem}
The GAP-metric and the Finsler metric coincide: $$g_{\mu \nu} = \undersym{\lambda}{i}_{\mu}(x,\,y) \,\, \undersym{\lambda}{i}_{\nu}(x,\,y) =\frac{1}{2} \dot{\partial_{\mu}} {\dot{\partial_{\nu}}} F^{2}$$
\end{theorem}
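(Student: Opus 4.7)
The plan is to show directly that the Hessian $\tfrac{1}{2}\dot\partial_\mu\dot\partial_\nu F^2$ equals the GAP-metric $g_{\mu\nu}:=\undersym{\lambda}{i}_\mu\undersym{\lambda}{i}_\nu$. The first equality in the statement is just the definition of the GAP-metric, so the content lies entirely in the second equality. Note that this computation is essentially the last step in the proof of Theorem~3.1(c); the present theorem just isolates it as a useful fact in its own right, and the same hypotheses (total symmetry of $\dot\partial_\alpha g_{\mu\nu}$ and degree-zero homogeneity of the $\undersym{\lambda}{i}$) are in force.

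The key preliminary lemma I would establish is
\[
y^{\alpha}\,\dot\partial_{\mu} g_{\alpha\beta}=0.
\]
This is where both hypotheses enter. Since each $\undersym{\lambda}{i}$ is positively homogeneous of degree $0$ in $y$, so is $g_{\mu\nu}=\undersym{\lambda}{i}_\mu\undersym{\lambda}{i}_\nu$, and Euler's theorem on homogeneous functions gives $y^\alpha\dot\partial_\alpha g_{\mu\beta}=0$. Then total symmetry of $\dot\partial_\alpha g_{\mu\beta}$ in its three indices allows me to permute $\alpha$ with $\mu$ and conclude $y^\alpha\dot\partial_\mu g_{\alpha\beta}=0$, as required.

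Armed with this identity, I would carry out the Hessian computation in two steps, exactly as in Theorem~3.1(b)--(c). Writing $F^2=g_{\alpha\beta}y^\alpha y^\beta$ and applying $\dot\partial_\nu$ with the product rule, the term $y^\alpha y^\beta\dot\partial_\nu g_{\alpha\beta}$ vanishes by the lemma, leaving $\dot\partial_\nu F^2 = 2\,g_{\nu\alpha}y^\alpha$. Differentiating a second time and again using the lemma to kill the $y^\alpha\dot\partial_\mu g_{\nu\alpha}$ term yields $\tfrac{1}{2}\dot\partial_\mu\dot\partial_\nu F^2=g_{\mu\nu}$, which is the claimed coincidence.

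There is no substantial obstacle here; the argument is a two-line calculation once the lemma is in hand. The only subtle point is accounting for the two hypotheses cleanly: homogeneity alone would give only $y^\alpha\dot\partial_\alpha g_{\mu\beta}=0$, and total symmetry is precisely what upgrades this to the contraction $y^\alpha\dot\partial_\mu g_{\alpha\beta}=0$ that the Hessian computation requires. Once that observation is made explicit, the rest is routine.
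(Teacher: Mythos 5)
Your proposal is correct and follows essentially the same route as the paper: the paper also obtains this theorem as an immediate consequence of the computation in Theorem~3.1(b)--(c), using Euler's theorem for the degree-zero homogeneity of $g_{\mu\nu}$ together with the total symmetry of $\dot{\partial}_{\alpha}g_{\mu\nu}$ to get $y^{\alpha}\dot{\partial}_{\mu}g_{\alpha\beta}=0$ and then differentiating $F^{2}=g_{\alpha\beta}y^{\alpha}y^{\beta}$ twice. Your explicit isolation of the lemma and of where each hypothesis enters is a clean presentation of exactly the paper's argument.
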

\begin{corollary}Let $(M,\, \, \undersym{\lambda}{i}(x,\,y),\,{F}(x,\,y))$ be an FP-space.
Then the following hold:
\begin{description}
\item[(a)] Both $\dot{\partial}$-Christoffel symbols of the GAP-space and the Finsler space coincide.
\item[(b)]Both formal Christoffel symbols of the GAP-space and the Finsler space coincide.
\item[(c)]The nonlinear connection of the GAP-space coincides with the Barthel connection.
  \item[(d)]
  Both the $\delta$-Christoffel symbols of the GAP-space and the Finsler space coincide. 
\end{description}
\end{corollary}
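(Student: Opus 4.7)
The plan is to leverage Theorem $3.2$, which asserts that the GAP-metric $g_{\mu\nu}=\undersym{\lambda}{i}_\mu\undersym{\lambda}{i}_\nu$ and the Finsler metric $\tfrac{1}{2}\dot{\partial}_\mu\dot{\partial}_\nu F^2$ coincide as tensors on the pullback bundle; in particular, their inverses $g^{\mu\nu}$ agree. All four assertions of the corollary will then cascade: (a) and (b) directly, (c) as a consequence of (b) plus the Barthel construction, and (d) as a consequence of (c) together with Theorem $3.2$.

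For part (a), I would note that the $\dot{\partial}$-Christoffel symbols defined in equation $(\ref{dot Christoffel})$ depend only on $g_{\mu\nu}$, $g^{\alpha\eta}$, and the vertical derivatives $\dot{\partial}_\mu g_{\nu\eta}$. Since the GAP-metric and the Finsler metric are the same tensor, applying the same defining formula with the same inputs necessarily yields the same $C^{\alpha}_{\mu\nu}$. Part (b) is entirely analogous: the formal Christoffel symbols $\gamma^{\alpha}_{\mu\nu}$ of equation $(\ref{formal Christoffel})$ are built from $g$, $g^{-1}$, and the horizontal derivatives $\partial_\mu g_{\nu\eta}$, so again the equality of metrics forces the equality of the $\gamma^\alpha_{\mu\nu}$.

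For part (c), the canonical spray $G^{\alpha}=\tfrac12\gamma^{\alpha}_{\mu\nu}y^\mu y^\nu$ of equation $(\ref{spray})$ is expressed purely in terms of the formal Christoffel symbols and $y$, so by (b) the two sprays coincide. Applying $\dot{\partial}_\beta$ to this common spray, as in equation $(\ref{Nonlinear})$, produces the Barthel nonlinear connection of the Finsler structure on one hand and the canonical nonlinear connection of the GAP-space on the other, so the two agree. Part (d) then follows at once: the coincidence of the nonlinear connections in (c) means that the horizontal frame $\delta_\mu=\partial_\mu-N^\alpha_\mu\dot{\partial}_\alpha$ is literally the same first-order operator in the two contexts, and combining this with the coincidence of $g$ (and therefore of $\delta_\mu g_{\nu\eta}$ and $g^{\alpha\eta}$) shows that the defining formula $(\ref{delta Christoffel})$ gives the same $\Gamma^{\alpha}_{\mu\nu}$.

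There is no genuine computational obstacle; the whole corollary is a structural consequence of the metric identity in Theorem $3.2$. The only point requiring care is the interpretation in (c): one must take the nonlinear connection of the GAP-space to be the one canonically induced by the $\undersym{\lambda}{i}$'s via the associated spray (as introduced in \cite{GAP}), rather than an arbitrary fixed $N^\alpha_\mu$; otherwise the identification with the Barthel connection would not be automatic. Once this convention is in place, the argument is just the chain (a)$\Rightarrow$unchanged, (b)$\Rightarrow$(c)$\Rightarrow$(d), with Theorem $3.2$ as the single input at each step.
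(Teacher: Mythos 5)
Your proposal is correct and follows essentially the same route as the paper, which states this corollary as an immediate consequence of Theorem~3.2: since all four objects are built from the metric (and, for (c) and (d), from the spray and nonlinear connection derived from it), the coincidence of the GAP-metric and the Finsler metric forces them to coincide. Your cautionary remark on (c) --- that the GAP nonlinear connection must be taken as the one canonically induced by the $\lambda$'s via the associated spray, as in the cited GAP reference --- is exactly the convention the paper relies on.
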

Consequently, the nonlinear connection of all Finsler connections of the FP-space is the Barthel connection. It can be written in terms of the building blocks $\lambda(x,y)$'s of the FP-space only as follows
$$N^{\alpha}_{\mu}=y^{\beta}\, \Ahmed{\Gamma}{\circ}\!^{\alpha}_{\beta\mu}= y^{\beta}\, (\Gamma^{\alpha}_{\beta\mu} -A^{\alpha}_{\beta\mu})= y^{\beta}\, \undersym{\lambda}{i}^{\alpha}\,(\delta_{\mu} \,\undersym{\lambda}{i}_{\beta}- \,\undersym{\lambda}{i}_{\beta {o\atop |} \mu}).$$
Therefore, we have the following results.
\begin{corollary}
The $(v)h$-torsion tensor $R^\alpha_{\mu \nu}$ of all connections, defined on an FP-space, is the same and is given by
$$R^\alpha_{\mu \nu} = \delta_{\nu} N^\alpha_{\mu}- \delta_{\mu} N^\alpha_{\nu}.$$
\end{corollary}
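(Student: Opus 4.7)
The plan is to leverage the immediately preceding observation that every Finsler connection of the FP-space shares the same nonlinear connection, namely the Barthel connection. Once that is in hand, the claim reduces to observing that the $(v)h$-torsion tensor, as defined in Definition~1.7(c), depends only on data built from the nonlinear connection.

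First I would recall the list of natural Finsler connections on the FP-space: from Section~2 we have the Miron connection $\overcirc{D}$, the canonical connection $D$ and the dual connection $\widetilde{D}$, all of which share the same nonlinear connection $N^{\alpha}_{\mu}$ by the remark at the end of that section. From Section~3 (Corollary~3.3(c) together with the discussion that follows), this common nonlinear connection coincides with the Barthel connection, so it is also the nonlinear connection of the Cartan and Berwald connections attached to the underlying Finsler space $(M,F)$. Hence every Finsler connection naturally defined on the FP-space has exactly the same $N^{\alpha}_{\mu}$.

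Next, I would recall that the horizontal basis vectors $\delta_{\mu}=\partial_{\mu}-N^{\alpha}_{\mu}\dot{\partial}_{\alpha}$ depend only on $N^{\alpha}_{\mu}$, so they also coincide across all the Finsler connections under consideration. By Definition~1.7(c), the $(v)h$-torsion tensor of any Finsler connection $F\Gamma=(\mathbf{F}^{\alpha}_{\mu\nu},\mathbf{N}^{\alpha}_{\mu},\mathbf{C}^{\alpha}_{\mu\nu})$ is
\[
\mathbf{R}^{\alpha}_{\mu\nu}=\delta_{\nu}\mathbf{N}^{\alpha}_{\mu}-\delta_{\mu}\mathbf{N}^{\alpha}_{\nu},
\]
which involves only $\mathbf{N}^{\alpha}_{\mu}$ and the associated $\delta_{\mu}$. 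Substituting the common value $\mathbf{N}^{\alpha}_{\mu}=N^{\alpha}_{\mu}$ gives the stated formula, and the resulting expression is manifestly independent of which Finsler connection one started from.

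There is essentially no obstacle here: the result is a direct corollary of the fact, established in the preceding discussion, that the nonlinear connection of every natural Finsler connection on the FP-space is the Barthel connection. The only thing worth stressing in the write-up is that the $(v)h$-torsion, unlike the other torsions, is constructed purely out of nonlinear-connection data, which is exactly why the coincidence propagates to it (and which distinguishes this corollary from analogous statements that would fail for, say, the $(h)h$- or $(v)hv$-torsions).
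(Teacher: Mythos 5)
Your proposal is correct and follows essentially the same route as the paper: the paper derives this corollary directly from the observation, stated just before it, that all Finsler connections of the FP-space share the Barthel connection as their common nonlinear connection, so the $(v)h$-torsion of Definition~1.7(c), being built solely from $N^{\alpha}_{\mu}$ and the adapted basis $\delta_{\mu}$, is the same for all of them. Your added remark about why this argument is special to the $(v)h$-torsion is a fair clarification but does not change the substance.
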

\begin{theorem}
Let $(M,\,\,\undersym{\lambda}{i}(x,\,y),\,{F}(x,\,y))$ be an FP-space. Then the Cartan connection coincides with the Miron connection.
\end{theorem}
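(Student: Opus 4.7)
The cleanest approach is to invoke the uniqueness statement of Miron's theorem (Theorem 2.3). The Miron connection is characterized as the \emph{unique} torsion-free metrical Finsler connection on a generalized Lagrange space once a nonlinear connection is fixed. So the plan is to exhibit the Cartan connection of the underlying Finsler structure as a particular Finsler connection satisfying these three defining properties (same nonlinear connection, $h$- and $v$-metric, $h$- and $v$-symmetric) and conclude equality by uniqueness.

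The first thing to check is that the two candidates share the same nonlinear connection. The Miron connection is built on the GAP nonlinear connection $N^\alpha_\mu$, while the Cartan connection of $(M,F)$ carries the Barthel connection $\dot\partial_\beta G^\alpha$. By Corollary 3.3(c), these two coincide in an FP-space, so the nonlinear connection component agrees. Next, the Cartan connection satisfies $C_2$ and $C_3$ of Theorem 1.6, i.e.\ it is $h$- and $v$-metric with respect to the Finsler metric $g_{\mu\nu}=\tfrac12\dot\partial_\mu\dot\partial_\nu F^2$; by Theorem 3.2 this metric equals the GAP-metric $\undersym{\lambda}{i}_\mu\undersym{\lambda}{i}_\nu$, so the Cartan connection is also metrical in the sense of Definition 2.3. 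Finally, conditions $C_4$ and $C_5$ of Theorem 1.6 state that $T^\alpha_{\mu\nu}=0$ and $S^\alpha_{\mu\nu}=0$, so the Cartan connection is torsion-free in both the $h$- and $v$-senses, matching the torsion requirements of Theorem 2.3.

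Having verified these three ingredients, the Cartan connection meets all hypotheses for the Miron uniqueness theorem applied to the nonlinear connection $N^\alpha_\mu$ of the GAP-space. Hence $C\Gamma=\overcirc{D}$, which is the desired conclusion. As a sanity check, one can also compare the connection coefficients directly: the $h$-coefficients of Miron are the $\delta$-Christoffel symbols of the GAP-metric, and those of Cartan are the $\delta$-Christoffel symbols of the Finsler metric, which coincide by Corollary 3.3(d); the $v$-coefficients are the respective $\dot\partial$-Christoffel symbols, which coincide by Corollary 3.3(a).

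There is really no hard step here: once Corollary 3.3 is in hand, the statement reduces to matching up defining properties. The only thing that requires a moment of care is making sure one applies the \emph{uniqueness} half of Theorem 2.3 rather than its existence half, since the Cartan connection is already known to exist and all that is being shown is that it fits the Miron template. In that sense, the real content of the result was absorbed earlier, in Theorem 3.2 and Corollary 3.3; the theorem stated here is the formal consequence.
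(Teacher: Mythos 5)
Your proposal is correct. The paper states this theorem without a printed proof; the argument it intends, which your closing ``sanity check'' reproduces, is the direct identification of coefficients: the Cartan connection is the triple formed by the $\delta$-Christoffel symbols, the Barthel connection and the $\dot{\partial}$-Christoffel symbols of the Finsler metric, while the Miron connection is given by exactly the same Christoffel-type formulas built from the GAP-metric and the GAP nonlinear connection, so the two coincide term by term by Theorem 3.2 together with Corollary 3.3 (a), (c), (d). Your main route is a mild repackaging of this: you feed the Cartan connection into the \emph{uniqueness} half of Miron's theorem (Theorem 2.3), checking that it is metrical for the common metric (via $C_2$, $C_3$ and Theorem 3.2), torsion-free in the required senses ($C_4$, $C_5$), and carries the same nonlinear connection (Corollary 3.3 (c)). That argument is valid and slightly more economical in its inputs, needing only Theorem 3.2 and part (c) of Corollary 3.3 rather than the coincidence of the Christoffel symbols themselves, whereas the coefficient comparison is what the paper's explicit formulas make immediate; as you observe, in either formulation the real content was already absorbed into Theorem 3.2 and Corollary 3.3, and the theorem is the formal consequence.
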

 In an FP-space, there are both Finsler and GAP-connections. Our aim now is to express all connections defined on this space in terms of one of them.
  We here express all the FP-connections in terms of the canonical connection only, which is, in turn, expressed in terms of the vector fields forming the parallelization $\,\undersym{\lambda}{i}(x,y)$ (cf. (2.9)).
\begin{theorem}
The Cartan (Miron), Berwald and dual connections can be expressed in terms of the canonical connection $D$ as follows:
\begin{description}
  \item[(a)] The Cartan connection $C\Gamma = (\Ahmed{\Gamma}{\circ}\!^{\alpha}_{\mu\nu},  N^{\alpha}_{\mu}, \Ahmed{C}{\circ}\!^{\alpha}_{\mu\nu})$ is given by
  \begin{equation}\label{Cartan}
  \Ahmed{\Gamma}{\circ}\!^{\alpha}_{\mu\nu} = {\Gamma}^{\alpha}_{\mu\nu} - A^{\alpha}_{\mu\nu}, \ \ \ \ \Ahmed{C}{\circ}\!^{\alpha}_{\mu\nu} = {C}^{\alpha}_{\mu\nu} - B^{\alpha}_{\mu\nu}.
  \end{equation}
\item [(b)]The Berwald connection $B \Gamma = (G^{\alpha}_{\mu\nu}, N^{\alpha}_{\beta}, 0)$ is given by
      \begin{equation}\label{Berwald}
        G^{\alpha}_{\mu\nu} ={\Gamma}^{\alpha}_{\mu\nu} +{P}^{\alpha}_{\mu\nu}.\end{equation}
         \item [(c)]
         The dual connection $\widetilde{D}= (\widetilde{\Gamma}^{\alpha}_{\mu\nu} ,N^{\alpha}_{\beta} ,\widetilde{C}^{\alpha}_{\mu\nu})$ is given by
  \begin{equation}
\widetilde{\Gamma}^{\alpha}_{\mu\nu} = \Gamma^{\alpha}_{\nu\mu}, \ \ \ \
\widetilde{C}^{\alpha}_{\mu\nu} = C^{\alpha}_{\nu\mu}.
  \end{equation}
\end{description}
\end{theorem}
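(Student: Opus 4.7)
The plan is to recognize that all three parts of the theorem are essentially unpackings of the definitions already made in the previous section, combined with Theorem 3.5 (which identifies the Cartan and Miron connections on an FP-space) and the observation that the nonlinear connection $N^{\alpha}_{\mu}$ is common to all four Finsler connections we are comparing. So the proof will be short; the subtlety is mostly in making sure the right identifications are invoked in the right places.

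For part (a), my starting point will be the definitions of the $h$- and $v$-contortion tensors (equation (\ref{contortion})), namely $A^{\alpha}_{\mu\nu}=\Gamma^{\alpha}_{\mu\nu}-\Ahmed{\Gamma}{\circ}^{\alpha}_{\mu\nu}$ and $B^{\alpha}_{\mu\nu}=C^{\alpha}_{\mu\nu}-\Ahmed{C}{\circ}^{\alpha}_{\mu\nu}$. Solving each for the Miron coefficients gives the two displayed formulas. I then invoke Theorem 3.5 to replace Miron by Cartan, which is legitimate exactly in the FP-setting where these two connections agree; I will state this identification explicitly so that the equality with $C\Gamma=(\Ahmed{\Gamma}{\circ}^{\alpha}_{\mu\nu},N^{\alpha}_{\mu},\Ahmed{C}{\circ}^{\alpha}_{\mu\nu})$ is visible.

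For part (b), I will combine Theorem 1.2(c), which says that the Berwald coefficients are $G^{\alpha}_{\mu\nu}=\dot{\partial}_{\nu}N^{\alpha}_{\mu}$, with the definition of the $(v)hv$-torsion tensor from Definition 1.5(d) applied to the canonical connection $D$, giving $P^{\alpha}_{\mu\nu}=\dot{\partial}_{\nu}N^{\alpha}_{\mu}-\Gamma^{\alpha}_{\mu\nu}$. Adding $\Gamma^{\alpha}_{\mu\nu}$ to both sides produces $\Gamma^{\alpha}_{\mu\nu}+P^{\alpha}_{\mu\nu}=\dot{\partial}_{\nu}N^{\alpha}_{\mu}=G^{\alpha}_{\mu\nu}$, which is exactly (\ref{Berwald}). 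Here I will point out that the nonlinear connection $N^{\alpha}_{\mu}$ appearing in both connections is the same object, by the corollary following Theorem 3.4 and the remark that the Barthel connection of the underlying Finsler structure coincides with the nonlinear connection of the GAP-space.

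Part (c) is immediate: it is literally the defining equation (\ref{dual GAP}) of the dual Finsler connection $\widetilde{D}$, which expresses $\widetilde{\Gamma}^{\alpha}_{\mu\nu}$ and $\widetilde{C}^{\alpha}_{\mu\nu}$ by transposing the lower indices of the canonical connection components, with the same nonlinear connection $N^{\alpha}_{\mu}$. I expect no real obstacle in this theorem; the only thing to be careful about is bookkeeping of which connection's torsion and contortion tensors one is referring to, and consistently citing Theorem 3.5 and the shared-nonlinear-connection property so that the formulas genuinely reconstruct each of the three connections from $D$ alone (together with the auxiliary tensors $A,B,P$ which are themselves expressible via the $\lambda$'s through (\ref{canonical GAP}) and (\ref{contortion})).
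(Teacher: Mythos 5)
Your proposal is correct and follows essentially the same route as the paper: part (a) from the contortion-tensor definitions in (2.11) together with the Cartan--Miron identification of Theorem 3.5, part (b) from the definition of the $(v)hv$-torsion applied to the canonical connection combined with $G^{\alpha}_{\mu\nu}=\dot{\partial}_{\nu}N^{\alpha}_{\mu}$, and part (c) directly from the defining relation (2.13) of the dual connection. Your explicit remark that all these connections share the same (Barthel) nonlinear connection is exactly the point the paper establishes just before the theorem, so nothing is missing.
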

In fact, \textbf{(a)} follows from (2.11), \textbf{(b)} follows from definition 1.7 \textbf{(d)}, and \textbf{(c)} follows from (2.13).



\begin{proposition}
The $(h)h$-, $(h)hv$-, $(v)hv$- and $(v)v$-torsion tensors of the canonical connection
${D}$ are  given respectively by:
\begin{description}
  \item[(a)] ${T}^\alpha_{\mu \nu}={\Gamma}^\alpha_{\mu \nu}- {\Gamma}^\alpha_{\nu \mu},$
  \item[(b)] ${C}^\alpha_{\mu \nu}:= \undersym{\lambda}{i}^{\alpha}(\dot{\partial}_{\nu}\, \undersym{\lambda}{i}_{\mu})$ the connection parameters,
  \item[(c)] ${P}^\alpha_{\mu \nu}= y^{\beta} \, [{C}^{\alpha}_{\mu \nu | \beta} - B^{\alpha}_{\mu \nu | \beta} - A^{\alpha}_{\eta \beta} ({C}^{\eta}_{\mu \nu}\,+ B^{\eta}_{\mu \nu}) +  A^{\eta}_{\mu \beta}(C^{\alpha}_{\eta \nu}\,+ B^{\alpha}_{\eta \nu}) +  A^{\eta}_{\nu \beta}({C}^{\alpha}_{\mu \eta} + B^{\alpha}_{\mu \eta}\,)] - A^{\alpha}_{\mu\nu},$
  \item[(d)] ${S}^\alpha_{\mu \nu}={C}^\alpha_{\mu \nu}-{C}^\alpha_{\nu \mu}.$
\end{description}
\end{proposition}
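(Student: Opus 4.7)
Parts (a), (b), and (d) are tautological restatements of Definition 1.7(a), (b), (e) applied to the canonical connection $D=(\Gamma^\alpha_{\mu\nu},N^\alpha_\mu,C^\alpha_{\mu\nu})$, together with the explicit formula $C^\alpha_{\mu\nu}=\undersym{\lambda}{i}^\alpha(\dot\partial_\nu\undersym{\lambda}{i}_\mu)$ from (2.9) for the $v$-connection coefficient. These require no further work.

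For (c), the plan is to begin from the definition $P^\alpha_{\mu\nu}:=\dot\partial_\nu N^\alpha_\mu-\Gamma^\alpha_{\mu\nu}$ (Definition 1.7(d) specialized to $D$). By Theorem 3.5 and Corollary 3.3 the canonical connection shares its Barthel nonlinear connection with the Cartan/Miron connection, so condition $C_1$ of Theorem 1.6 yields $N^\alpha_\mu=y^\beta\Ahmed{\Gamma}{\circ}^\alpha_{\mu\beta}$. Differentiating, and invoking the $h$-symmetry of the Miron connection together with $\Ahmed{\Gamma}{\circ}^\alpha_{\mu\nu}=\Gamma^\alpha_{\mu\nu}-A^\alpha_{\mu\nu}$ from (2.11), one obtains the reduction
\[
P^\alpha_{\mu\nu} \;=\; -A^\alpha_{\mu\nu} \;+\; y^\beta\,\dot\partial_\nu\Ahmed{\Gamma}{\circ}^\alpha_{\mu\beta},
\]
which already accounts for the outer $-A^\alpha_{\mu\nu}$ summand in the target formula.

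It then remains to rewrite $y^\beta\dot\partial_\nu\Ahmed{\Gamma}{\circ}^\alpha_{\mu\beta}$ as the bracketed quantity. I would substitute $\Ahmed{\Gamma}{\circ}^\alpha_{\mu\beta}=\Gamma^\alpha_{\mu\beta}-A^\alpha_{\mu\beta}$ and trade each $\dot\partial_\nu$-derivative of a (non-tensorial) connection coefficient, respectively of the tensorial contortion, against covariant-derivative identities driven by the generalized AP-condition $\undersym{\lambda}{i}^\alpha_{\ |\mu}=\undersym{\lambda}{i}^\alpha_{\ ||\mu}=0$. This re-expresses the partial derivatives of $\Gamma$ and $A$ in terms of the tensor fields $C^\alpha_{\mu\nu}$ (the $v$-connection of $D$) and $B^\alpha_{\mu\nu}=C^\alpha_{\mu\nu}-\Ahmed{C}{\circ}^\alpha_{\mu\nu}$ (the $v$-contortion from (2.12)), together with their $h$-covariant derivatives $(\,\cdot\,)_{|\beta}$. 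The cross-terms produced both by swapping partial for covariant derivatives and by the $A$-correction inherent in $\Gamma=\Ahmed{\Gamma}{\circ}+A$ collect into the three bracket contributions $A^\alpha_{\eta\beta}(C^\eta_{\mu\nu}+B^\eta_{\mu\nu})$, $A^\eta_{\mu\beta}(C^\alpha_{\eta\nu}+B^\alpha_{\eta\nu})$ and $A^\eta_{\nu\beta}(C^\alpha_{\mu\eta}+B^\alpha_{\mu\eta})$, with exactly the signs stated.

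The main obstacle is precisely this final bookkeeping step: since $\Ahmed{\Gamma}{\circ}^\alpha_{\mu\beta}$ is not a tensor, one must perform the $\dot\partial_\nu$-to-$|\beta$ conversion only on tensorial combinations ($A^\alpha_{\mu\beta}$, $C^\alpha_{\mu\nu}$, $B^\alpha_{\mu\nu}$), while correctly accounting for the asymmetry between the three free indices $\mu,\nu,\beta$ when unpacking each contracted pair. A single sign or index-position error would destroy the identification. As a safeguard I would cross-check by computing the same $P^\alpha_{\mu\nu}$ through the alternative route $P^\alpha_{\mu\nu}=G^\alpha_{\mu\nu}-\Gamma^\alpha_{\mu\nu}=\Ahmed{P}{\circ}^\alpha_{\mu\nu}-A^\alpha_{\mu\nu}$, using Theorem 3.6(b) and the $(v)hv$-torsion $\Ahmed{P}{\circ}$ of the Cartan connection, and verifying that the two presentations agree term-by-term.
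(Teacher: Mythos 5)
Parts (a), (b) and (d) of your proposal are handled exactly as the paper handles them---they are immediate from Definition 1.7 applied to $D=(\Gamma^{\alpha}_{\mu\nu},N^{\alpha}_{\mu},C^{\alpha}_{\mu\nu})$ together with (2.9)---and your reduction for (c),
\[
P^{\alpha}_{\mu\nu}\;=\;y^{\beta}\,\dot{\partial}_{\nu}\Ahmed{\Gamma}{\circ}^{\alpha}_{\mu\beta}\;-\;A^{\alpha}_{\mu\nu}
\;=\;\overcirc{P}\!^{\alpha}_{\mu\nu}-A^{\alpha}_{\mu\nu},
\]
is correct and is where the paper's argument effectively begins.

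The gap is that the bracketed expression is never actually derived, and the plan you give for deriving it leans on the wrong ingredient. The paper obtains (c) from two specific facts: the standard identity for the Cartan (= Miron) connection
\[
\overcirc{P}\!^{\alpha}_{\mu\nu}\;=\;y^{\beta}\ \overcirc{C}\!^{\alpha}_{\mu\nu{o\atop|}\beta},
\]
a consequence of the $h$- and $v$-metricity of that connection (the paper invokes it again explicitly in the proof of Theorem 4.10(b)), followed by the substitution $\overcirc{C}\!^{\alpha}_{\mu\nu}=C^{\alpha}_{\mu\nu}-B^{\alpha}_{\mu\nu}$ and the passage from the Miron to the canonical $h$-covariant derivative via relation (3.4), whose $A$-contraction corrections are exactly the bracket terms. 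Your proposal instead tries to process $y^{\beta}\dot{\partial}_{\nu}\Ahmed{\Gamma}{\circ}^{\alpha}_{\mu\beta}$ ``driven by the generalized AP-condition''; but the AP-condition only encodes $\Gamma^{\alpha}_{\mu\nu}=\undersym{\lambda}{i}^{\alpha}(\delta_{\nu}\,\undersym{\lambda}{i}_{\mu})$ and $C^{\alpha}_{\mu\nu}=\undersym{\lambda}{i}^{\alpha}(\dot{\partial}_{\nu}\,\undersym{\lambda}{i}_{\mu})$, gives no access to the metric compatibility of $\Ahmed{\Gamma}{\circ}$ on which the displayed identity rests, and differentiating $\undersym{\lambda}{i}^{\alpha}(\delta_{\beta}\,\undersym{\lambda}{i}_{\mu})$ with $\dot{\partial}_{\nu}$ produces commutator terms $[\dot{\partial}_{\nu},\delta_{\beta}]=-G^{\eta}_{\beta\nu}\dot{\partial}_{\eta}$ that your sketch never confronts. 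Moreover, the step you yourself call ``the main obstacle''---that the cross-terms collect ``with exactly the signs stated''---is precisely where the content lies and cannot simply be asserted: carrying out the paper's substitution of $\overcirc{C}=C-B$ into (3.4) produces the three $A$-contractions with the combinations $C-B$, so matching the $C+B$ combinations appearing in the statement requires the explicit computation (and careful scrutiny of the signs), not a declaration that they agree. Your fallback cross-check $P^{\alpha}_{\mu\nu}=G^{\alpha}_{\mu\nu}-\Gamma^{\alpha}_{\mu\nu}=\overcirc{P}\!^{\alpha}_{\mu\nu}-A^{\alpha}_{\mu\nu}$ is sound but lands you back at the same missing identity for $\overcirc{P}\!^{\alpha}_{\mu\nu}$.
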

We note that in the proof of item $\textbf{(c)}$ above, we have used the relation:
\begin{equation}\label{relation bet. canonical and cartan}
X^{\alpha}_{\mu \nu{o \atop
|} \beta} = {X^{\alpha}_{\mu \nu}}_{ | \beta} - X^{\eta}_{\mu \nu}\, A^{\alpha}_{\eta \beta} + X^{\alpha}_{\eta \nu} \, A^{\eta}_{\mu \beta} +  X^{\alpha}_{\mu \eta} \, A^{\eta}_{\nu \beta} \end{equation}
\par
     We have the following crucial result.
\begin{theorem}\cite{GAP} The $h$-, $v$- and $hv$-curvature tensors of the canonical connection
vanish identically.
\end{theorem}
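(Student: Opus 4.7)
The plan is to exploit the generalized AP-condition $\lambda^{\alpha}_{\ |\mu} = 0$ and $\lambda^{\alpha}_{\ ||\mu} = 0$, which says that the $n$ parallelization vector fields forming the $\lambda$-basis are absolutely parallel with respect to the canonical connection $D$. Since the $\undersym{\lambda}{i}$'s span the pullback bundle pointwise, any tensor on $\pi^{-1}(TM)$ that annihilates all of them must vanish. So the strategy reduces to showing that each of the three curvature operators kills the $\lambda$'s.

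First I would write down the Ricci identities for the canonical connection $D$ applied to a general $\pi$-vector field $X^{\alpha}$. These identities have the schematic form
\begin{equation*}
X^{\alpha}_{\ |\nu|\sigma} - X^{\alpha}_{\ |\sigma|\nu} \;=\; X^{\eta}\, R^{\alpha}_{\ \eta\nu\sigma} - X^{\alpha}_{\ ||\eta}\, R^{\eta}_{\ \nu\sigma} - X^{\alpha}_{\ |\eta}\, T^{\eta}_{\ \nu\sigma},
\end{equation*}
\begin{equation*}
X^{\alpha}_{\ |\nu||\sigma} - X^{\alpha}_{\ ||\sigma|\nu} \;=\; X^{\eta}\, P^{\alpha}_{\ \eta\nu\sigma} - X^{\alpha}_{\ ||\eta}\, P^{\eta}_{\ \nu\sigma} - X^{\alpha}_{\ |\eta}\, C^{\eta}_{\ \nu\sigma},
\end{equation*}
\begin{equation*}
X^{\alpha}_{\ ||\nu||\sigma} - X^{\alpha}_{\ ||\sigma||\nu} \;=\; X^{\eta}\, S^{\alpha}_{\ \eta\nu\sigma} - X^{\alpha}_{\ ||\eta}\, S^{\eta}_{\ \nu\sigma},
\end{equation*}
which are standard commutation formulas for a Finsler connection on the pullback bundle (derivable directly from Definition~1.8 by expanding $\delta_{\sigma}$ and $\dot{\partial}_{\sigma}$).

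Next I would substitute $X^{\alpha} = \undersym{\lambda}{i}^{\alpha}$ into each of the three identities. The generalized AP-condition makes the entire left-hand side and every term on the right containing a first covariant derivative of $\lambda$ vanish identically. What is left is
\begin{equation*}
\undersym{\lambda}{i}^{\eta}\, R^{\alpha}_{\ \eta\nu\sigma} = 0, \qquad \undersym{\lambda}{i}^{\eta}\, P^{\alpha}_{\ \eta\nu\sigma} = 0, \qquad \undersym{\lambda}{i}^{\eta}\, S^{\alpha}_{\ \eta\nu\sigma} = 0,
\end{equation*}
valid for each $i = 1,\dots,n$. Contracting each equation with the dual coframe $\undersym{\lambda}{i}_{\beta}$ and using the duality relation $\undersym{\lambda}{i}^{\eta}\,\undersym{\lambda}{i}_{\beta} = \delta^{\eta}_{\beta}$ of Section~2 yields $R^{\alpha}_{\ \beta\nu\sigma} = P^{\alpha}_{\ \beta\nu\sigma} = S^{\alpha}_{\ \beta\nu\sigma} = 0$, as required.

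The only step that is not completely routine is the careful derivation of the three Ricci identities with the correct torsion corrections: one must check that the $(h)h$-, $(v)hv$- and $(v)v$-torsion terms appearing in the commutators are indeed the ones that get hit by $X_{|\eta}^{\alpha}$ or $X_{||\eta}^{\alpha}$ (so that they drop out once $X = \undersym{\lambda}{i}$), and that no residual torsion-only term is produced. This is essentially bookkeeping in the expansion of $[\delta_{\nu},\delta_{\sigma}]$, $[\delta_{\nu},\dot{\partial}_{\sigma}]$ and $[\dot{\partial}_{\nu},\dot{\partial}_{\sigma}]$, but it is where all the care is required; everything else is a one-line consequence of the parallelism of the $\lambda$'s.
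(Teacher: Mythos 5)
Your argument is correct and it is the standard proof of this fact; note that the paper itself does not prove the theorem but imports it from the reference [GAP] (where it is obtained from the explicit $\lambda$-expressions (2.9) of the canonical connection, equivalently from the generalized AP-condition (2.10)), so your route via the Ricci identities is exactly the expected one: the commutation formulas for a general Finsler connection contain the torsion tensors only in contraction with $X^{\alpha}_{\ |\eta}$ or $X^{\alpha}_{\ ||\eta}$, so putting $X=\undersym{\lambda}{i}$ kills everything except $\undersym{\lambda}{i}^{\eta}$ times the curvature, and contracting with $\undersym{\lambda}{i}_{\beta}$ and using $\undersym{\lambda}{i}^{\eta}\,\undersym{\lambda}{i}_{\beta}=\delta^{\eta}_{\beta}$ finishes the proof.

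Two remarks on the step you rightly flag as the delicate one. First, for the $h$-identity the commutator $[\delta_{\nu},\delta_{\sigma}]=R^{\eta}_{\nu\sigma}\dot{\partial}_{\eta}$ produces, after rewriting $\dot{\partial}_{\eta}X^{\alpha}=X^{\alpha}_{\ ||\eta}-X^{\mu}C^{\alpha}_{\mu\eta}$, precisely the term $C^{\alpha}_{\mu\eta}R^{\eta}_{\nu\sigma}$ appearing in Definition 1.8(a), so the commutator curvature is exactly the paper's $h$-curvature; the $hv$-case works out likewise with Definition 1.8(b). Second, for the $v$-case a direct expansion gives the commutator curvature $\dot{\partial}_{\sigma}C^{\alpha}_{\mu\nu}-\dot{\partial}_{\nu}C^{\alpha}_{\mu\sigma}+C^{\eta}_{\mu\nu}C^{\alpha}_{\eta\sigma}-C^{\eta}_{\mu\sigma}C^{\alpha}_{\eta\nu}$, whose quadratic part has the opposite ordering to Definition 1.8(c) as printed; the printed form is inconsistent with the pattern of 1.8(a) and appears to be a misprint (with the printed form the canonical $v$-curvature would not vanish), so you should state and use the standard ordering, for which your argument closes. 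Finally, an even shorter alternative avoiding the Ricci identities altogether is to substitute $\Gamma^{\alpha}_{\mu\nu}=\undersym{\lambda}{i}^{\alpha}(\delta_{\nu}\,\undersym{\lambda}{i}_{\mu})$ and $C^{\alpha}_{\mu\nu}=\undersym{\lambda}{i}^{\alpha}(\dot{\partial}_{\nu}\,\undersym{\lambda}{i}_{\mu})$ directly into Definition 1.8 and use $\undersym{\lambda}{i}^{\alpha}\,\undersym{\lambda}{i}_{\beta}=\delta^{\alpha}_{\beta}$ together with $[\delta_{\nu},\delta_{\sigma}]=R^{\eta}_{\nu\sigma}\dot{\partial}_{\eta}$; all terms cancel in a few lines.
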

 The above theorem, namely, the vanishing of the curvature tensors of the canonical connection, allows us to express all curvature tensors of the FP-space in terms of the torsion tensors of the canonical connection only as will be shown in the following three propositions. The proofs of the first two propositions are similar to the proofs of Proposition $6.1$ and Proposition $6.3$ of \cite{GAP} \footnote{It should be noted that the $hv$-curvature tensor of the dual connection in Proposition $3.9$ differs by a sign from the $hv$-curvature tensor of the dual connection in Proposition $6.1$ of \cite{GAP}}. Proposition $3.11$ can be proved in a similar fashion.

\begin{proposition}
The torsion and curvature tensors of the dual connection $\widetilde{D} $ can be expressed in the form:
\begin{description}
\item[(a)] The $(h)h$-, $(h)hv$-, $(v)hv$- and $(v)v$-torsion tensors of $\widetilde{D}$ are given by:\vspace{5pt}\\
\begin{minipage}{0.30 \textwidth}
\begin{description}
\item[(1)] $\widetilde{T}^{\alpha}_{\mu \nu}= - T^{\alpha}_{\mu\nu},$
\item[(3)] $\widetilde{P}^{\alpha}_{\mu \nu} = {P}^{\alpha}_{\nu \mu},$
\end{description}
\end{minipage}
\begin{minipage}{0.49 \textwidth}
\vspace*{-.2cm}
\begin{description}
\item[(2)] $\widetilde{C}^{\alpha}_{\mu \nu} := {C}^{\alpha}_{\nu\mu},$
\item[(4)]$\widetilde{S}^{\alpha}_{\mu \nu} = -S^{\alpha}_{\mu\nu}.$
\end{description}
\end{minipage}

\item[(b)] The $h$-, $v$- and $hv$-curvature tensors of $\widetilde{D}$ are given by:
\begin{description}
\item[(1)] $\widetilde{R}^{\alpha}_{\mu\sigma\nu} =
{T}^{\alpha}_{\sigma\nu |\mu} + {C}^{\alpha}_{\eta\mu}
{R}^{\eta}_{\sigma\nu} + {C}^{\alpha}_{\sigma \eta} {R}^{\eta}_{\nu\mu} +
{C}^{\alpha}_{\nu \eta }{R}^{\eta}_{\mu\sigma},$
\item[(2)] $\widetilde{S}^{\alpha}_{\mu\sigma\nu} = {S}^{\alpha}_{\sigma\nu ||\mu}$,
\item[(3)] $\widetilde{P}^{\alpha}_{\nu\mu\sigma} = {S}^{\alpha}_{\nu\mu |\sigma} +
{T}^{\alpha}_{\sigma\nu ||\mu} - {S}^{\eta}_{\mu\nu}
{T}^{\alpha}_{\sigma\eta} + {S}^{\alpha}_{\mu\eta}
{T}^{\eta}_{\sigma\nu} + {T}^{\alpha}_{\eta\nu} \, {C}^{\eta}_{\sigma\mu} +
{P}^{\eta}_{\sigma\mu} \, {S}^{\alpha}_{\eta\nu}.$
\end{description}
\end{description}
\end{proposition}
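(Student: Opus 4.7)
The plan is to split the proposition into its two parts and treat them with different techniques: part (a) is direct substitution from the definition of the dual connection, while part (b) is a systematic exploitation of the vanishing of the canonical connection's curvature tensors (Theorem 3.8).

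For part (a), I would substitute the defining relations $\widetilde{\Gamma}^{\alpha}_{\mu\nu}=\Gamma^{\alpha}_{\nu\mu}$, $\widetilde{N}^{\alpha}_{\mu}=N^{\alpha}_{\mu}$, $\widetilde{C}^{\alpha}_{\mu\nu}=C^{\alpha}_{\nu\mu}$ of (2.13) into Definition 1.7. Items (1), (2) and (4) then come out in one line each. For item (3), the key observation is that, by the Barthel connection property, $N^{\alpha}_{\mu}=\dot{\partial}_{\mu}G^{\alpha}$, so $\dot{\partial}_{\nu}N^{\alpha}_{\mu}=\dot{\partial}_{\nu}\dot{\partial}_{\mu}G^{\alpha}$ is symmetric in $(\mu,\nu)$. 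Hence
$$\widetilde{P}^{\alpha}_{\mu\nu}=\dot{\partial}_{\nu}\widetilde{N}^{\alpha}_{\mu}-\widetilde{\Gamma}^{\alpha}_{\mu\nu}=\dot{\partial}_{\mu}N^{\alpha}_{\nu}-\Gamma^{\alpha}_{\nu\mu}=P^{\alpha}_{\nu\mu}.$$

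For part (b), the uniform strategy is to expand each curvature of $\widetilde{D}$ via Definition 1.8, convert every $\widetilde{\Gamma}$, $\widetilde{C}$ back to canonical quantities with swapped lower indices, and subtract the corresponding canonical curvature (which vanishes by Theorem 3.8). The residual terms are then reorganized as $h$- or $v$-covariant derivatives of torsions plus admissible correction terms. Concretely, for (1) one writes $\widetilde{R}^{\alpha}_{\mu\sigma\nu}=\mathfrak{U}_{(\sigma,\nu)}\{\delta_{\nu}\Gamma^{\alpha}_{\sigma\mu}+\Gamma^{\eta}_{\sigma\mu}\Gamma^{\alpha}_{\nu\eta}\}+C^{\alpha}_{\eta\mu}R^{\eta}_{\sigma\nu}$, replaces $\Gamma^{\alpha}_{\nu\mu}=\Gamma^{\alpha}_{\mu\nu}-T^{\alpha}_{\mu\nu}$, and uses $R^{\alpha}_{\mu\sigma\nu}=0$; the remainder collects into $T^{\alpha}_{\sigma\nu|\mu}$ after inverting the definition of the $|$-derivative, plus the three $C\cdot R$ terms produced by the alternator (noting $\widetilde{R}^{\eta}_{\sigma\nu}=R^{\eta}_{\sigma\nu}$ from Corollary 3.4). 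For (2) the $v$-curvature involves only $\dot{\partial}$ and $C$; using $\widetilde{C}^{\alpha}_{\mu\nu}=C^{\alpha}_{\nu\mu}$ together with the vanishing canonical $v$-curvature, the expression collapses to $\dot{\partial}_{\mu}S^{\alpha}_{\sigma\nu}$ plus the $C$-correction terms making up $S^{\alpha}_{\sigma\nu||\mu}$. Both computations parallel those of Proposition 6.1 of \cite{GAP}, as the authors note.

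For (3), the $hv$-curvature of $\widetilde{D}$ is the most involved. Starting from
$$\widetilde{P}^{\alpha}_{\nu\mu\sigma}=\dot{\partial}_{\sigma}\widetilde{\Gamma}^{\alpha}_{\nu\mu}-\widetilde{C}^{\alpha}_{\nu\sigma\widetilde{|}\mu}+\widetilde{C}^{\alpha}_{\nu\eta}\widetilde{P}^{\eta}_{\mu\sigma},$$
I would first use part (a) to rewrite $\widetilde{C}$ and $\widetilde{P}$ in terms of $C$ and $P$ with swapped indices, yielding the $P^{\eta}_{\sigma\mu}S^{\alpha}_{\eta\nu}$ contribution directly. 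Next, I would express the dual $h$-derivative $\widetilde{|}$ in terms of the canonical $|$-derivative by adding the difference terms $X^{\ldots}(\widetilde{\Gamma}-\Gamma)^{\ldots}=X^{\ldots}T^{\ldots}$; this is where $T^{\alpha}_{\sigma\nu||\mu}$ and the mixed terms $S^{\eta}_{\mu\nu}T^{\alpha}_{\sigma\eta}$, $S^{\alpha}_{\mu\eta}T^{\eta}_{\sigma\nu}$, $T^{\alpha}_{\eta\nu}C^{\eta}_{\sigma\mu}$ arise. Finally, the bare $\dot{\partial}_{\sigma}\Gamma^{\alpha}_{\mu\nu}$ term is eliminated via the vanishing of the canonical $hv$-curvature, which by Definition 1.8(b) with $P^{\alpha}_{\mu\nu\sigma}=0$ gives $\dot{\partial}_{\sigma}\Gamma^{\alpha}_{\mu\nu}=C^{\alpha}_{\mu\sigma|\nu}-C^{\alpha}_{\mu\eta}P^{\eta}_{\nu\sigma}$, and the remaining piece becomes $S^{\alpha}_{\nu\mu|\sigma}$ after using $\widetilde{C}^{\alpha}_{\mu\nu}=C^{\alpha}_{\nu\mu}$.

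The main obstacle is the bookkeeping in (b)(3): many a priori distinct cross-contributions appear when converting $\widetilde{|}$ to $|$ and $\dot{\partial}\Gamma$ to $C_{|}+CP$, and they must cancel down to exactly the six stated summands. The vanishing of all three canonical curvatures (Theorem 3.8) is what makes this collapse possible; without it, the formula would contain residual curvature terms of $D$. The computation is routine in spirit but requires careful index tracking, fully analogous to the template of Proposition 6.1 in \cite{GAP}.
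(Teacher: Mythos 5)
Your proposal takes essentially the same route as the paper, which proves this proposition by referring to the computations of Propositions 6.1 and 6.3 of \cite{GAP}: substitute the duality relations (2.13) into Definitions 1.7 and 1.8 and exploit the vanishing of the canonical connection's curvature tensors (Theorem 3.8) to reorganize the residue into covariant derivatives of the torsion tensors. Your additional observation that $\dot{\partial}_{\nu}N^{\alpha}_{\mu}=\dot{\partial}_{\mu}N^{\alpha}_{\nu}$ (Barthel connection, Corollary 3.3) is exactly what justifies item (a)(3) in the FP-setting, so the argument is sound and consistent with the paper's.
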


\begin{proposition}
The torsion and curvature tensors of the Cartan (Miron) connection $C \Gamma$ can be expressed in the form:
\begin{description}
\item[(a)] The $(h)h$- and $(v)v$-torsion tensors of $C \Gamma$ vanish and the  $(h)hv$-torsion tensors of $C \Gamma$ is given as in $(\ref{Cartan})$ and $(v)hv$-torsion tensors of $C \Gamma$ is given by:
\begin{equation}\label{p-torsion cartan}   \overcirc{P}\!^{\alpha}_{\mu \nu} ={P}^{\alpha}_{\mu \nu} + A^{\alpha}_{\mu \nu}  \end{equation}
\item[(b)] The $h$-, $hv$- and $v$-curvature tensors of $C \Gamma$ are given by:
    \begin{description}
\item[(1)]  $\overcirc{R}\!^{\alpha}_{\mu \nu \sigma} = \mathfrak{U}_{(\sigma,\nu)} \{A^{\alpha}_{\mu \sigma | \nu} + A^{\eta}_{\mu \nu} \, A^{\alpha}_{\eta\sigma}\} +A^{\alpha}_{\mu \eta} {T}^{\eta}_{\sigma \nu} + B^{\alpha}_{\mu \eta} {R}^{\eta}_{\sigma\nu},$
   \item[(2)] $\overcirc{P}\!^{\alpha}_{\mu \nu \sigma} =  B^{\alpha}_{\mu \sigma |\nu }- A^{\alpha}_{\mu \nu ||\sigma } + {T}^{\eta}_{\sigma \nu} ({C}^{\alpha}_{\mu \eta} - B^{\alpha}_{\mu \eta}) - A^{\alpha}_{\mu\eta}\, {C}^{\eta}_{\nu \sigma}- B^{\eta}_{\mu\sigma} \, A^{\alpha}_{\eta\nu }+ B^{\alpha}_{\eta\sigma} \, A^{\eta}_{\mu\nu}- B^{\alpha}_{\mu\eta} \, {P}^{\eta}_{\nu \sigma}, $
   \item[(3)] $\overcirc{S}\!^{\alpha}_{\mu \nu \sigma} = \mathfrak{U}_{(\nu,\sigma)} \{B^{\alpha}_{\mu \nu || \sigma} + B^{\eta}_{\mu \sigma} \, B^{\alpha}_{\eta\nu}\} +B^{\alpha}_{\mu \eta} {S}^{\eta}_{\nu \sigma}$.

 \end{description}
\end{description}
\end{proposition}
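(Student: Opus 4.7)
The plan is to exploit three ingredients supplied by the earlier material: (i) the decomposition $\Ahmed{\Gamma}{\circ}^{\alpha}_{\mu\nu} = \Gamma^{\alpha}_{\mu\nu} - A^{\alpha}_{\mu\nu}$ and $\Ahmed{C}{\circ}^{\alpha}_{\mu\nu} = C^{\alpha}_{\mu\nu} - B^{\alpha}_{\mu\nu}$ from equation (\ref{Cartan}) together with the fact that all three Finsler connections on the FP-space share the same nonlinear connection $N^{\alpha}_{\mu}$; (ii) Theorem 3.8, which asserts that the $h$-, $v$- and $hv$-curvature tensors of the canonical connection $D$ vanish identically; and (iii) the translation formula (\ref{relation bet. canonical and cartan}), which exchanges a Miron-covariant derivative for a canonical-covariant derivative at the price of three contortion corrections.

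For part \textbf{(a)}, the vanishing of $\overcirc{T}\!^{\alpha}_{\mu\nu}$ and $\overcirc{S}\!^{\alpha}_{\mu\nu}$ is built into Theorem \ref{Miron}, since the Miron connection is the unique torsion-free metric Finsler connection. The $(h)hv$-torsion of $C\Gamma$ equals $\Ahmed{C}{\circ}^{\alpha}_{\mu\nu}$ by Definition 1.7(b), which is (\ref{Cartan}). For the $(v)hv$-torsion, I would start from $\overcirc{P}\!^{\alpha}_{\mu\nu} := \dot{\partial}_{\nu} N^{\alpha}_{\mu} - \Ahmed{\Gamma}{\circ}^{\alpha}_{\mu\nu}$, subtract the analogous identity $P^{\alpha}_{\mu\nu} = \dot{\partial}_{\nu} N^{\alpha}_{\mu} - \Gamma^{\alpha}_{\mu\nu}$ for the canonical connection, and use (\ref{Cartan}); this yields $\overcirc{P}\!^{\alpha}_{\mu\nu} = P^{\alpha}_{\mu\nu} + A^{\alpha}_{\mu\nu}$ directly, which is (\ref{p-torsion cartan}).

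For part \textbf{(b)}, I would write each of $\overcirc{R}\!^{\alpha}_{\mu\nu\sigma}$, $\overcirc{P}\!^{\alpha}_{\mu\nu\sigma}$, $\overcirc{S}\!^{\alpha}_{\mu\nu\sigma}$ via Definition 1.9 applied to the triplet $(\Ahmed{\Gamma}{\circ}, N, \Ahmed{C}{\circ})$, then substitute the decompositions $\Ahmed{\Gamma}{\circ} = \Gamma - A$, $\Ahmed{C}{\circ} = C - B$ and $\overcirc{P}\!^{\eta}_{\nu\sigma} = P^{\eta}_{\nu\sigma} + A^{\eta}_{\nu\sigma}$. Expanding the alternating sums and products gives a large collection of terms; the crucial observation is that those built exclusively from $\Gamma$, $C$ and $R$ reassemble to form the curvature tensors $R^{\alpha}_{\mu\nu\sigma}$, $P^{\alpha}_{\mu\nu\sigma}$, $S^{\alpha}_{\mu\nu\sigma}$ of the canonical connection, which are zero by Theorem 3.8 and therefore drop out of the final expression. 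The remaining terms involve $A$, $B$, their Miron derivatives, and the torsions $T$, $S$, $R$, $P$. Applying (\ref{relation bet. canonical and cartan}) converts each Miron-covariant derivative $A^{\alpha}_{\mu\sigma \,\Ahmed{|}{\circ}\, \nu}$ into $A^{\alpha}_{\mu\sigma|\nu}$ plus contortion corrections, and similarly for $B$; collecting these and tidying up the index gymnastics produces the stated formulas for $\overcirc{R}\!$, $\overcirc{P}\!$ and $\overcirc{S}\!$.

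The main obstacle is purely combinatorial: in the $hv$-curvature, several contortion corrections coming from (\ref{relation bet. canonical and cartan}) interact with the torsion-substitutions arising from $\overcirc{P} = P + A$, and getting the signs and free indices to line up correctly so that the canonical-connection curvature assembles cleanly (and vanishes) requires careful bookkeeping. Once this cancellation is carried out for $\overcirc{P}\!^{\alpha}_{\mu\nu\sigma}$, the analogous manipulations for $\overcirc{R}\!^{\alpha}_{\mu\nu\sigma}$ and $\overcirc{S}\!^{\alpha}_{\mu\nu\sigma}$ follow the same template, mirroring the proofs of Propositions 6.1 and 6.3 of \cite{GAP} as indicated in the remark preceding the statement.
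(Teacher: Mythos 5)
Your proposal is correct and follows essentially the same route as the paper: the paper gives no detailed computation for this proposition, merely noting that the proof parallels Propositions 6.1 and 6.3 of the cited GAP paper, i.e.\ it rests on exactly the ingredients you list --- the contortion decompositions $\overcirc{\Gamma}=\Gamma-A$, $\overcirc{C}=C-B$, $\overcirc{P}\!^{\alpha}_{\mu\nu}=P^{\alpha}_{\mu\nu}+A^{\alpha}_{\mu\nu}$ (all connections sharing the nonlinear connection $N^{\alpha}_{\mu}$), the vanishing of the canonical curvature tensors (Theorem 3.8), and the derivative-conversion formula (3.4), with part (a) coming from Miron's uniqueness theorem and Definition 1.7. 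Your outline is, if anything, more explicit than what the paper records.
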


\begin{proposition}The torsion and curvature tensors of the Berwald connection $B\Gamma$ can be expressed as follows:
\begin{description}
\item[(a)]
 The $(h)h$-, $(h)hv$-, $(v)hv$- and $(v)v$-torsion tensors of $B\Gamma$ vanish.
\item[(b)] The $h$-, $v$- and $hv$-curvature tensors of the $B\Gamma$  are given by:
\begin{description}
\item[(1)] $\bar{R}^{\alpha}_{\mu\nu \sigma} = \mathfrak{U}_{(\nu,\sigma)} \{{P}^{\alpha}_{\mu \nu | \sigma} + {P}^{\eta}_{\mu \nu} \, {P}^{\alpha}_{\eta\sigma}\} +{P}^{\alpha}_{\mu \eta} {T}^{\eta}_{\nu \sigma} - {C}^{\alpha}_{\mu \eta} {R}^{\eta}_{\nu\sigma},$
\item[(2)] $\bar{S}^{\alpha}_{\mu\sigma\nu} = 0.$
\item[(3)] $\bar{P}^{\alpha}_{\nu\mu\sigma} = \dot{\partial}_{\sigma}{P}^{\alpha}_{\nu\mu}+ \dot{\partial}_{\sigma}\,{\Gamma}^{\alpha}_{\nu\mu}.$
\end{description}
\end{description}
\end{proposition}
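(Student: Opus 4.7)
The plan is to split the proof along the natural lines of the statement. Item (a) should follow at once from the characterization of the Berwald connection in Theorem 1.10. Item (b) should be obtained by plugging the decomposition $G^{\alpha}_{\mu\nu}=\Gamma^{\alpha}_{\mu\nu}+P^{\alpha}_{\mu\nu}$ from Theorem 3.6(b) into the abstract curvature formulas of Definition 1.8 and then using the vanishing of the canonical curvatures (Theorem 3.8) to kill the pure $\Gamma$-pieces.

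For (a), the Berwald triple is $B\Gamma=(G^{\alpha}_{\mu\nu},N^{\alpha}_{\mu},0)$, so $\bar{C}^{\alpha}_{\mu\nu}=0$ is tautological, forcing $\bar{S}^{\alpha}_{\mu\nu}=\bar{C}^{\alpha}_{\mu\nu}-\bar{C}^{\alpha}_{\nu\mu}=0$; property $B_3$ (h-symmetry) yields $\bar{T}^{\alpha}_{\mu\nu}=0$; and property $B_5$ (i.e.\ $\bar{F}^{\alpha}_{\mu\nu}=\dot{\partial}_{\mu}N^{\alpha}_{\nu}$) is precisely $\bar{P}^{\alpha}_{\mu\nu}=0$. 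So all four torsions vanish.

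For (b), the easy cases come first. Since $\bar{C}=0$, the $v$-curvature formula collapses term by term, giving $\bar{S}^{\alpha}_{\mu\sigma\nu}=0$, which is (2). The $hv$-curvature formula reduces for the same reason to $\bar{P}^{\alpha}_{\nu\mu\sigma}=\dot{\partial}_{\sigma}G^{\alpha}_{\nu\mu}$, and substituting $G=\Gamma+P$ gives (3) directly. For (1), I would expand
$$\bar{R}^{\alpha}_{\mu\nu\sigma}=\mathfrak{U}_{(\nu,\sigma)}\{\delta_{\sigma}G^{\alpha}_{\mu\nu}+G^{\eta}_{\mu\nu}G^{\alpha}_{\eta\sigma}\}$$
(the $\bar{C}\bar{R}$ tail vanishes), insert $G=\Gamma+P$, and rewrite each ordinary $\delta_{\sigma}P^{\alpha}_{\mu\nu}$ using the canonical $h$-covariant derivative identity $P^{\alpha}_{\mu\nu|\sigma}=\delta_{\sigma}P^{\alpha}_{\mu\nu}+P^{\eta}_{\mu\nu}\Gamma^{\alpha}_{\eta\sigma}-P^{\alpha}_{\eta\nu}\Gamma^{\eta}_{\mu\sigma}-P^{\alpha}_{\mu\eta}\Gamma^{\eta}_{\nu\sigma}$. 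The pure $\Gamma$-block $\mathfrak{U}_{(\nu,\sigma)}\{\delta_{\sigma}\Gamma^{\alpha}_{\mu\nu}+\Gamma^{\eta}_{\mu\nu}\Gamma^{\alpha}_{\eta\sigma}\}$ is replaced by $-C^{\alpha}_{\mu\eta}R^{\eta}_{\nu\sigma}$ using the vanishing $h$-curvature of the canonical connection from Theorem 3.8. After alternation, the cross terms involving $\Gamma^{\alpha}_{\eta\sigma}$ and $\Gamma^{\eta}_{\mu\sigma}$ cancel in pairs; one antisymmetrization bundles $\Gamma^{\eta}_{\nu\sigma}-\Gamma^{\eta}_{\sigma\nu}=T^{\eta}_{\nu\sigma}$ contracted with $P^{\alpha}_{\mu\eta}$; and the surviving terms reassemble into
$$\mathfrak{U}_{(\nu,\sigma)}\{P^{\alpha}_{\mu\nu|\sigma}+P^{\eta}_{\mu\nu}P^{\alpha}_{\eta\sigma}\}+P^{\alpha}_{\mu\eta}T^{\eta}_{\nu\sigma}-C^{\alpha}_{\mu\eta}R^{\eta}_{\nu\sigma},$$
which is (1).

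The only genuine obstacle is the bookkeeping in (1): keeping track of the several $\Gamma\cdot P$ cross terms produced by the alternation and checking that they cancel cleanly, leaving the three clean blocks above. No new geometric input is needed beyond (3.12), the tensorial conversion rule between $\delta$-derivatives and $h$-covariant derivatives, and the vanishing canonical curvatures of Theorem 3.8. The computation runs in close parallel to the analogous curvature derivations in \cite{GAP}, so the methodology is well-rehearsed and the risk is purely combinatorial rather than conceptual.
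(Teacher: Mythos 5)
Your proposal is correct and follows essentially the same route the paper indicates: the paper omits the details, saying Proposition 3.11 ``can be proved in a similar fashion'' to Propositions 6.1 and 6.3 of \cite{GAP}, i.e.\ by substituting $G^{\alpha}_{\mu\nu}=\Gamma^{\alpha}_{\mu\nu}+P^{\alpha}_{\mu\nu}$ into the curvature formulas of Definition 1.8 and invoking the vanishing of the canonical curvature tensors (Theorem 3.8), which is exactly your computation. Your cancellation bookkeeping for (1) checks out (the $P\Gamma$ cross terms cancel under the alternation, leaving $P^{\alpha}_{\mu\eta}T^{\eta}_{\nu\sigma}$ and the $-C^{\alpha}_{\mu\eta}R^{\eta}_{\nu\sigma}$ block), the only quibble being that the covariant-derivative identity you use is just the definition (1.2) of $h$-covariant differentiation with respect to the canonical connection rather than relation (3.12).
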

\begin{remark}
\em{The above expressions show that the curvature tensors of all connections existing in the FP-space are formulated in terms of the torsion and contortion  tensors of the canonical connection. However, there is a strong relation between the torsion and contortion tensors as shown in the following relations \cite{GAP}:
\begin{equation}
A_{\mu\nu\sigma} = \frac{1}{2}(T_{\mu\nu\sigma}+T_{\sigma\nu\mu}+T_{\nu\sigma\mu})
\end{equation}
and
\begin{equation}
B_{\mu\nu\sigma} = \frac{1}{2}(s_{\mu\nu\sigma}+s_{\sigma\nu\mu}+s_{\nu\sigma\mu})
\end{equation}
where \,$T_{\mu\nu\sigma}:= g_{\eta\mu}\,T^{\eta}_{\nu\sigma}, \, \, s_{\mu\nu\sigma}:= g_{\eta\mu}\, S^{\eta}_{\nu\sigma}, \,\, A_{\mu\nu\sigma}:= g_{\eta\mu}\,A^{\eta}_{\nu\sigma}$ \,and \,$B_{\mu\nu\sigma}:= g_{\eta\mu}\,B^{\eta}_{\nu\sigma}$.}\end{remark}
This means that all curvature tensors can be expressed solely in terms of the torsion tensors of the canonical connection.
\par
The next table provides a comparison concerning the different connections of an FP-space as well as their associated torsion and curvature tensors.
\vspace{8pt}
\begin{center}{{Table 1: Summary of the connections of an FP-space}}
\\[0.3 cm]

\small{\begin{tabular} {|c|c|c|c|c|}\hline
 \multirow{2}{*}{\textbf{} }&\multirow{2}{*}{Cartan (Miron)}&
 \multirow{2}{*}{Canonical} &\multirow{2}{*}{Dual }&\multirow{2}{*}{ Berwald}\\
 &&&&\\[0.1 cm]\hline
\multirow{2}{*}{$({F}^{\alpha}_{\mu\nu},
{N}^{\alpha}_{\mu},{C}^{\alpha}_{\mu\nu})$}&
\multirow{2}{*}{$(\,\overcirc{\Gamma}\!^{\alpha}_{\mu\nu}, N^{\alpha}_{\mu},\,\overcirc{C}\!^{\alpha}_{\mu\nu})$}&
\multirow{2}{*}{$( \Gamma^{\alpha}_{\mu\nu}, N^{\alpha}_{\mu}, C^{\alpha}_{\mu\nu})$}&
\multirow{2}{*}{$( \widetilde{\Gamma}^{\alpha}_{\mu\nu}, N^{\alpha}_{\mu}, \widetilde{C}^{\alpha}_{\mu\nu})$}&
\multirow{2}{*}{$( G^{\alpha}_{\mu\nu}, N^{\alpha}_{\mu}, 0)$}\\
&&&&
\\[0.1 cm]\hline
\multirow{2}{*}{{(h)h-torsion} ${T}^{\alpha}_{\mu\nu}$}&\multirow{2}{*}{$0$}&\multirow{2}{*}{${T}^{\alpha}_{\mu\nu}$}&
\multirow{2}{*}{$\widetilde{T}^{\alpha}_{\mu\nu}= -T^{\alpha}_{\mu\nu}$}&\multirow{2}{*}{$0$}\\[0.2 cm]
\multirow{2}{*}{{(h)hv-torsion}
${C}^{\alpha}_{\mu\nu}$}&\multirow{2}{*}{$\overcirc{C}\!^{\alpha}_{\mu\nu}= C^{\alpha}_{\mu\nu}-B^{\alpha}_{\mu\nu}$} &
\multirow{2}{*}{$C^{\alpha}_{\mu\nu}$}&
\multirow{2}{*}{$\widetilde{C}^{\alpha}_{\mu\nu}={C}^{\alpha}_{\nu\mu}$}&\multirow{2}{*}{$0$}
\\[0.3 cm]
{ (v)h-torsion} ${R}^{\alpha}_{\mu\nu}$&$\overcirc{R}\!^{\alpha}_{\mu\nu}={R}^{\alpha}_{\mu\nu}$
&$R^{\alpha}_{\mu\nu}$&
$\widetilde{R}^{\alpha}_{\mu\nu}=R^{\alpha}_{\mu\nu}$&
$\bar{R}^{\alpha}_{\mu\nu}={R}^{\alpha}_{\mu\nu}$\\[0.1 cm]
{ (v)hv-torsion}
${P}^{\alpha}_{\mu\nu}$&$\overcirc{P}\!^{\alpha}_{\mu\nu}={P}^{\alpha}_{\mu\nu}+ A^{\alpha}_{\mu\nu}$&
  ${P}^{\alpha}_{\mu\nu}$&$\widetilde{P}^{\alpha}_{\mu\nu}={P}^{\alpha}_{\nu\mu}$ &$0$
\\[0.1 cm]
{ (v)v-torsion} ${S}^{\alpha}_{\mu\nu}$&$0$ &${S}^{\alpha}_{\mu\nu}$&$\widetilde{S}^{\alpha}_{\mu\nu}= -S^{\alpha}_{\mu\nu}$ &$0$
\\[0.2 cm]\hline
{ h-curvature} ${R}^{\alpha}_{\mu\nu\sigma}$& $\overcirc{R}\!^{\alpha}_{\mu\nu\sigma}$&
$0$&
$\widetilde{R}^{\alpha}_{\mu\nu\sigma}$&$\bar{R}^{\alpha}_{\mu\nu\sigma}$
\\[0.1 cm]
{hv-curvature} ${P}^{\alpha}_{\mu\nu\sigma}$& $\overcirc{P}\!^{\alpha}_{\mu\nu\sigma}$&
$0$&
$\widetilde{P}^{\alpha}_{\mu\nu\sigma}$&$\bar{P}^{\alpha}_{\mu\nu\sigma}$
\\[0.1 cm]

{v-curvature}
${S}^{\alpha}_{\mu\nu\sigma}$& ${S}^{\alpha}_{\mu\nu\sigma}$&
$0$&$\widetilde{S}^{\alpha}_{\mu\nu\sigma}$ &$0$
\\[0.1 cm]\hline 
\end{tabular}}
\end{center}


\Section{Special Finslerized Parallelizable spaces}
\hspace{12pt}
In this section we study  some special cases of an FP-space. Further conditions are assumed. In both the FP-Landsberg and FP-Minkowskian spaces, the extra conditions are imposed on the Finsler structure only whereas in the FP-Riemannian space the extra conditions are imposed on the GAP-structure only. Finally, in the FP-Berwald space the extra conditions are imposed on both the Finsler and the GAP-structures. The definitions of special Finsler spaces treated here may be found in \cite{Mats} and \cite{Amr soliman}.

\subsection{FP-Landsberg and FP-Berwald spaces}
\begin{definition}
A Landsberg space is a Finsler space satisfying the condition $\overcirc{P}\!^{\alpha}_{\mu \nu \sigma} \, y^{\mu}=0$.\end{definition}

\begin{definition}
A Finsler space (M,\,F) is said to be a Berwald space if the $hv$-Cartan curvature tensor vanishes i.e. $\Ahmed{P}{\circ}^{\alpha}_{\mu\nu \sigma}=0$.\end{definition}
\par
We introduce the following definitions.
\begin{definition}
An FP-Landsberg space is an FP-space equipped with the condition $\overcirc{P}\!^{\alpha}_{\mu \nu \sigma} \, y^{\mu}=0$.
\end{definition}
\begin{definition}
An FP-Berwald space is an FP-space for which $\Ahmed{P}{\circ}^{\alpha}_{\mu\nu \sigma}=0$ and ${\Gamma}^{\alpha}_{\mu \nu}(x, y) \equiv {\Gamma}^{\alpha}_{\mu \nu}(x)$.\end{definition}
\begin{remark}
\em{ It should be noted that in an FP-Berwald manifold the Cartan tensor $\Ahmed{C}{\circ}\!\!_{\alpha\mu\nu}$ and the vertical counterpart ${C}^{\alpha}_{\mu\nu}$ of the canonical connection remain unaltered. Hence, the $v$-contortion tensor and the $v$-curvature tensors are the same as in the general case.}
\end{remark}
\begin{proposition}
Let $(M,\,\undersym{\lambda}{i}(x,\,y),F(x,\,y))$ be an FP-Berwald manifold.
\begin{description}
\item[(a)] The horizontal counterpart of the Cartan and Berwald connections coincide.
\item[(b)]The horizontal contortion tensor $A^{\alpha}_{\mu\nu}$  and the horizontal counterpart of the canonical and dual connections are functions of the positional argument only.
\end{description}
\end{proposition}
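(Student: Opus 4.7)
For part \textbf{(a)}, the plan is to exploit Theorem $3.6$ together with the torsion identity (\ref{p-torsion cartan}) of Proposition $3.10$(a). Writing $\overcirc{\Gamma}\!^{\alpha}_{\mu\nu} = \Gamma^{\alpha}_{\mu\nu}-A^{\alpha}_{\mu\nu}$ and $G^{\alpha}_{\mu\nu} = \Gamma^{\alpha}_{\mu\nu}+P^{\alpha}_{\mu\nu}$ and subtracting yields
$$G^{\alpha}_{\mu\nu}-\overcirc{\Gamma}\!^{\alpha}_{\mu\nu} = P^{\alpha}_{\mu\nu}+A^{\alpha}_{\mu\nu} = \overcirc{P}\!^{\alpha}_{\mu\nu},$$
so proving \textbf{(a)} reduces to showing that the Cartan $(v)hv$-torsion $\overcirc{P}\!^{\alpha}_{\mu\nu}$ vanishes on an FP-Berwald space. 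I would then contract the defining formula for $\overcirc{P}\!^{\alpha}_{\mu\nu\sigma}$ (Definition $1.9$(b)) with $y^{\mu}$ and invoke three short auxiliary identities: (i) $y^{\mu}\dot{\partial}_{\sigma}\overcirc{\Gamma}\!^{\alpha}_{\mu\nu}=\overcirc{P}\!^{\alpha}_{\nu\sigma}$, obtained by applying $\dot{\partial}_{\sigma}$ to $y^{\mu}\overcirc{\Gamma}\!^{\alpha}_{\mu\nu}=N^{\alpha}_{\nu}$ (which follows from Cartan's $C_{1}$ and the symmetry $C_{4}$); (ii) $y^{\mu}\overcirc{C}\!^{\alpha}_{\mu\eta}=0$, from Euler's theorem on the $0$-homogeneous metric $g_{\mu\nu}$; and (iii) $y^{\mu}\overcirc{C}\!^{\alpha}_{\mu\sigma | \nu}=0$, obtained by $h$-differentiating (ii) and using $y^{\mu}{}_{|\nu}=0$ from $C_{1}$. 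Combined with the Berwald hypothesis $\overcirc{P}\!^{\alpha}_{\mu\nu\sigma}=0$, these three identities leave $\overcirc{P}\!^{\alpha}_{\nu\sigma}=0$, completing~\textbf{(a)}.

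For part \textbf{(b)}, the $y$-independence of $\Gamma^{\alpha}_{\mu\nu}$ is built into the definition of an FP-Berwald space, and by the dual relation (\ref{dual GAP}) one immediately inherits $\widetilde{\Gamma}^{\alpha}_{\mu\nu}(x,y)=\Gamma^{\alpha}_{\nu\mu}(x)$. For the contortion $A^{\alpha}_{\mu\nu}=\Gamma^{\alpha}_{\mu\nu}-\overcirc{\Gamma}\!^{\alpha}_{\mu\nu}$, it therefore suffices to show that $\overcirc{\Gamma}\!^{\alpha}_{\mu\nu}$ depends only on $x$. But part \textbf{(a)} identifies $\overcirc{\Gamma}\!^{\alpha}_{\mu\nu}$ with $G^{\alpha}_{\mu\nu}$, and the classical Finsler characterization of Berwald spaces (the equivalence $\overcirc{P}\!^{\alpha}_{\mu\nu\sigma}=0 \iff G^{\alpha}_{\mu\nu}(x,y)=G^{\alpha}_{\mu\nu}(x)$) supplies precisely the required positional dependence.

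The main obstacle is this last classical step invoked in \textbf{(b)}: deducing that $G^{\alpha}_{\mu\nu}$ (equivalently $\overcirc{\Gamma}\!^{\alpha}_{\mu\nu}$) is purely positional from the vanishing of the full $hv$-Cartan curvature, as opposed merely to its Landsberg contraction. If one prefers a self-contained derivation, Proposition $3.11$(b)(3) expresses the Berwald $hv$-curvature as $\bar{P}^{\alpha}_{\nu\mu\sigma}=\dot{\partial}_{\sigma}P^{\alpha}_{\nu\mu}+\dot{\partial}_{\sigma}\Gamma^{\alpha}_{\nu\mu}$, whose second term vanishes by the hypothesis on $\Gamma$; linking the vanishing of $\bar{P}^{\alpha}_{\nu\mu\sigma}$ with that of $\overcirc{P}\!^{\alpha}_{\mu\nu\sigma}$ via Theorem $3.6$(b) then forces $\dot{\partial}_{\sigma}G^{\alpha}_{\nu\mu}=\dot{\partial}_{\sigma}P^{\alpha}_{\nu\mu}=0$. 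Everything else in the proposition is short bookkeeping with Theorem $3.6$ and the definitions of the connections involved.
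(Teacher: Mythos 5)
Your proof is correct, and for part \textbf{(a)} it takes a genuinely different route from the paper. The paper simply cites the classical chain of equivalences from the literature, $\overcirc{P}\!^{\alpha}_{\mu\nu\sigma}=0 \Longleftrightarrow \overcirc{\Gamma}\!^{\alpha}_{\mu\nu}(x,y)\equiv\overcirc{\Gamma}\!^{\alpha}_{\mu\nu}(x) \Longleftrightarrow \overcirc{C}\!^{\alpha}_{\mu\nu{o\atop|}\beta}=0 \Longrightarrow \overcirc{P}\!^{\alpha}_{\mu\nu}=0 \Longrightarrow \overcirc{\Gamma}\!^{\alpha}_{\mu\nu}=G^{\alpha}_{\mu\nu}$, whereas you derive the key step $\overcirc{P}\!^{\alpha}_{\nu\sigma}=y^{\mu}\,\overcirc{P}\!^{\alpha}_{\mu\nu\sigma}$ directly by contracting Definition 1.9(b) with $y^{\mu}$ and using $C_{1}$, $C_{4}$ and the homogeneity of $g_{\mu\nu}$; your three auxiliary identities all check out. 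This buys two things: the argument is self-contained (no appeal to the cited equivalences), and it shows that \textbf{(a)} already holds under the weaker FP-Landsberg hypothesis $y^{\mu}\,\overcirc{P}\!^{\alpha}_{\mu\nu\sigma}=0$, which the paper's citation-based proof does not make visible. For part \textbf{(b)} your primary argument coincides with the paper's: $\Gamma^{\alpha}_{\mu\nu}(x)$ by definition, $\widetilde{\Gamma}^{\alpha}_{\mu\nu}=\Gamma^{\alpha}_{\nu\mu}$, and $A^{\alpha}_{\mu\nu}=\Gamma^{\alpha}_{\mu\nu}-\overcirc{\Gamma}\!^{\alpha}_{\mu\nu}$ with the $x$-dependence of $\overcirc{\Gamma}\!^{\alpha}_{\mu\nu}$ supplied by the classical Berwald characterization, which is exactly what the paper imports from its reference. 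One caveat: the ``self-contained'' alternative you sketch for \textbf{(b)} is not actually self-contained as stated --- Theorem 3.6(b) only gives $G^{\alpha}_{\mu\nu}=\Gamma^{\alpha}_{\mu\nu}+P^{\alpha}_{\mu\nu}$ and does not by itself link the vanishing of $\bar{P}^{\alpha}_{\nu\mu\sigma}=\dot{\partial}_{\sigma}G^{\alpha}_{\nu\mu}$ to that of $\overcirc{P}\!^{\alpha}_{\mu\nu\sigma}$; after part \textbf{(a)} one only gets $\dot{\partial}_{\sigma}G^{\alpha}_{\mu\nu}=\dot{\partial}_{\sigma}\overcirc{\Gamma}\!^{\alpha}_{\mu\nu}=\overcirc{C}\!^{\alpha}_{\mu\sigma{o\atop|}\nu}$, and showing this vanishes is precisely the nontrivial content of the classical theorem. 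Since you flag this and rest \textbf{(b)} on the classical characterization anyway, the proof as a whole stands.
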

\begin{proof}
\begin{description}
  \item[(a)]It follows from \cite{LDG} that
 $$ \Ahmed{P}{\circ}^{\alpha}_{\mu\nu \sigma}=0 \Longleftrightarrow \,\overcirc{\Gamma}\!^{\alpha}_{\mu \nu}(x, y) \equiv \,\overcirc{\Gamma}\!^{\alpha}_{\mu \nu}(x) \Longleftrightarrow \, \Ahmed{C}{\circ}\!^{\alpha}_{\mu \nu {o \atop
|} \beta} =0 \Longrightarrow \overcirc{P}\!^{\alpha}_{\mu\nu}=0 \Longrightarrow \, \overcirc{\Gamma}\!^{\alpha}_{\mu \nu}(x) = G^{\alpha}_{\mu \nu}(x).$$

  \item[(b)] It is a direct outcome of the first part of equation $(3.1)$, the definition of the FP-Berwald manifold and equation $(3.3)$.
\end{description}
\end{proof}
Every FP-Berwald space is an FP-Landsberg space. The converse is not true in general. Nevertheless, we have the following partial converse.
\begin{theorem}
\begin{description}
\item[]
  \item[(a)] Assume that both ${\Gamma}^{\alpha}_{\mu\nu}$ and  $A^{\alpha}_{\mu\nu}$ are functions of $x$ only. Then an FP-Landsberg space is an FP-Berwald space.
  \item[(b)] Assume that both ${\Gamma}^{\alpha}_{\mu\nu}$ and ${P}^{\alpha}_{\mu\nu}$ are functions of $x$ only. Then an FP-Landsberg space is an FP-Berwald space.
\end{description}
\end{theorem}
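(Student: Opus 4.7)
The plan is to reduce both parts of the theorem to a single consequence of the Landsberg hypothesis, namely that the $(v)hv$-torsion $\overcirc{P}^{\alpha}_{\mu\nu}$ of the Cartan connection vanishes. By Proposition~$3.10\textbf{(a)}$ this is equivalent to $P^{\alpha}_{\mu\nu}+A^{\alpha}_{\mu\nu}=0$, and it is precisely this identity that allows the hypotheses of parts \textbf{(a)} and \textbf{(b)} to be converted into one another.

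To establish the reduction, I contract the defining expression for the Cartan $hv$-curvature (Definition~$1.8\textbf{(b)}$, with Cartan as the Finsler connection) with $y^{\mu}$. The term $y^{\mu}\overcirc{C}^{\alpha}_{\mu\eta}\,\overcirc{P}^{\eta}_{\nu\sigma}$ vanishes since $y^{\mu}\overcirc{C}^{\alpha}_{\mu\eta}=0$ (Euler's theorem, as $\overcirc{C}$ is a $y$-derivative of an object of $y$-degree zero); the term $y^{\mu}\overcirc{C}^{\alpha}_{\mu\sigma\,|\,\nu}$ vanishes as well, by the Leibniz rule together with Cartan's property $C_{1}$ ($y^{\alpha}{}_{|\beta}=0$). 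For the surviving piece $y^{\mu}\dot{\partial}_{\sigma}\overcirc{\Gamma}^{\alpha}_{\mu\nu}$, I rewrite $y^{\mu}\overcirc{\Gamma}^{\alpha}_{\mu\nu}=N^{\alpha}_{\nu}$ (from $C_{1}$ together with the $h$-symmetry $C_{4}$), apply $\dot{\partial}_{\sigma}$, and use $\dot{\partial}_{\sigma}N^{\alpha}_{\nu}=G^{\alpha}_{\nu\sigma}$ and $C_{4}$ once more to obtain
$$y^{\mu}\overcirc{P}^{\alpha}_{\mu\nu\sigma}=G^{\alpha}_{\nu\sigma}-\overcirc{\Gamma}^{\alpha}_{\nu\sigma}=\overcirc{P}^{\alpha}_{\nu\sigma},$$
the last equality being $(3.1)$ combined with $(3.2)$ (equivalently, Proposition~$3.10\textbf{(a)}$). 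Hence the Landsberg condition collapses to $\overcirc{P}^{\alpha}_{\mu\nu}=0$, i.e.\ to $P^{\alpha}_{\mu\nu}=-A^{\alpha}_{\mu\nu}$.

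Both parts now follow immediately. For \textbf{(a)}, the assumption $A^{\alpha}_{\mu\nu}(x)$ together with $P=-A$ forces $P^{\alpha}_{\mu\nu}(x)$; consequently $(3.1)$ shows that $\overcirc{\Gamma}^{\alpha}_{\mu\nu}=\Gamma^{\alpha}_{\mu\nu}-A^{\alpha}_{\mu\nu}$ depends only on $x$. The equivalence $\overcirc{\Gamma}^{\alpha}_{\mu\nu}(x,y)\equiv\overcirc{\Gamma}^{\alpha}_{\mu\nu}(x)\Longleftrightarrow\overcirc{P}^{\alpha}_{\mu\nu\sigma}=0$ recorded in the proof of Proposition~$4.5$ then delivers the vanishing of the $hv$-Cartan curvature, which combined with the assumed $\Gamma^{\alpha}_{\mu\nu}(x)$ is exactly the FP-Berwald condition of Definition~$4.4$. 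Part \textbf{(b)} is symmetric: the hypothesis $P^{\alpha}_{\mu\nu}(x)$ together with $A=-P$ forces $A^{\alpha}_{\mu\nu}(x)$, after which the same chain of equivalences applies.

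The main technical hurdle is cleanly extracting the classical Finsler identity $y^{\mu}\overcirc{P}^{\alpha}_{\mu\nu\sigma}=\overcirc{P}^{\alpha}_{\nu\sigma}$; once this is in place the theorem follows directly from Propositions~$3.10\textbf{(a)}$ and~$4.5$, and no calculation with the lengthy expression for $\overcirc{P}^{\alpha}_{\mu\nu\sigma}$ in Proposition~$3.10\textbf{(b)}$ is needed.
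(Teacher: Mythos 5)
Your proof is correct, but it is organized differently from the paper's. The paper's proof is a one-liner: for \textbf{(a)} it reads off from the first part of $(3.1)$ that $\overcirc{\Gamma}\!^{\alpha}_{\mu\nu}=\Gamma^{\alpha}_{\mu\nu}-A^{\alpha}_{\mu\nu}$ is a function of $x$ only, and for \textbf{(b)} it reads off from $(3.2)$ that $G^{\alpha}_{\mu\nu}=\Gamma^{\alpha}_{\mu\nu}+P^{\alpha}_{\mu\nu}$ is a function of $x$ only; each statement then gives the Berwald property by the standard characterizations cited to the Chern--Chen--Lam reference in the proof of Proposition 4.6, so the Landsberg hypothesis is never actually invoked. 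You instead first prove the contraction identity $y^{\mu}\,\overcirc{P}\!^{\alpha}_{\mu\nu\sigma}=\overcirc{P}\!^{\alpha}_{\nu\sigma}$ (correct under the paper's Definitions 1.7(d) and 1.8(b), using $C_1$, $C_4$ and $y^{\mu}\overcirc{C}\!^{\alpha}_{\mu\eta}=0$, though the paper never states it), so that the Landsberg condition becomes $P^{\alpha}_{\mu\nu}+A^{\alpha}_{\mu\nu}=0$ via Proposition 3.10(a); this lets you transfer the $x$-only dependence between $A$ and $P$ and funnel both parts through the single equivalence $\overcirc{\Gamma}\!^{\alpha}_{\mu\nu}(x)\Longleftrightarrow\overcirc{P}\!^{\alpha}_{\mu\nu\sigma}=0$. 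What your route buys is a unified treatment of \textbf{(a)} and \textbf{(b)} and an explicit role for the Landsberg hypothesis (genuinely needed in your version of \textbf{(b)} to get $A^{\alpha}_{\mu\nu}(x)$ from $P^{\alpha}_{\mu\nu}(x)$); what the paper's route buys is economy, and it shows in passing that the hypotheses of each part already force the Berwald property without Landsberg --- indeed in your part \textbf{(a)} the detour through $P=-A$ is redundant, since $\Gamma(x)$ and $A(x)$ alone give $\overcirc{\Gamma}(x)$, exactly as you then use. Two small slips: the equivalence you quote is recorded in the proof of Proposition 4.6, not 4.5 (4.5 is a Remark), and the phrase ``consequently'' in part \textbf{(a)} misattributes the $x$-dependence of $\overcirc{\Gamma}$ to the deduced $P(x)$ rather than to the stated hypothesis.
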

\begin{proof}
Clearly this holds from the first part of $(\ref{Cartan})$ and from $(\ref{Berwald})$, respectively.
\end{proof}

The next table provides a comparison concerning the connections of an FP-Berwald space and their associated torsion and curvature tensors.

\vspace{8pt}
{\begin{center}{{Table 2: Torsion and curvature tensors of an FP-Berwald space}}
\\[0.4 cm]
{
\small\fontsize{12}{15}\selectfont{\begin{tabular} {|l|c|c|c|c|c|}\hline
 &{{\fontsize{12}{12}\selectfont Cartan }}&
 { {\fontsize{12}{12}\selectfont Canonical }} &{{\fontsize{12}{12}\selectfont Dual }} &{{\fontsize{12}{12}\selectfont Berwald}}
\\[0.1 cm]\hline
\multirow{2}{*}{{ (h)h-torsion} $T^{\alpha}_{\mu\nu}$}&\multirow{2}{*}{$0$}&\multirow{2}{*}{${T}^{\alpha}_{\mu\nu}$}&\multirow{2}{*}{$ -{T}^{\alpha}_{\mu\nu}$}&\multirow{2}{*}{$0$}\\[0.1cm]
\multirow{2}{*}{{(h)hv-torsion}
$ C^{\alpha}_{\mu\nu}$}&\multirow{2}{*}{${C}^{\alpha}_{\mu\nu} - B^{\alpha}_{\mu \nu}$} &
\multirow{2}{*}{${C}^{\alpha}_{\mu\nu}$}&
\multirow{2}{*}{${C}^{\alpha}_{\nu\mu}$}&
\multirow{2}{*}{$0$}
\\[0.3 cm]
{(v)h-torsion} $R^{\alpha}_{\mu\nu}$&${R}^{\alpha}_{\mu\nu}$
&${R}^{\alpha}_{\mu\nu}$&
${R}^{\alpha}_{\mu\nu}$&
${R}^{\alpha}_{\mu\nu}$\\[0.1 cm]
{(v)hv-torsion}
$P^{\alpha}_{\mu\nu}$&$0$&
  $-A^{\alpha}_{\mu\nu}$&$-A^{\alpha}_{\nu\mu}$ &$0$
  \\[0.1 cm]
{(v)v-torsion} $S^{\alpha}_{\mu\nu}$&$0$ &${S}^{\alpha}_{\mu\nu}$&$-{S}^{\alpha}_{\mu\nu}$ &$0$
\\[0.1 cm]\hline
{ h-curvature} $R^{\alpha}_{\mu\nu \sigma}$& $\overcirc{R}\!^{\alpha}_{\mu\nu \sigma}$&
$0$&
$\widetilde{R}^{\alpha}_{\mu\nu \sigma}$&$\bar{R}^{\alpha}_{\mu\nu \sigma}$
\\[0.1 cm]
{hv-curvature} $P^{\alpha}_{\mu\nu \sigma}$&
$0$&$0$&$\widetilde{P}^{\alpha}_{\mu\nu \sigma}$&$0$
\\[0.1 cm]
{v-curvature}
$S^\alpha_{\mu\nu\sigma}$&$\overcirc{S}\!^\alpha_{\mu\nu\sigma}$&
$0$&$\widetilde{S}^{\alpha}_{\mu\nu \sigma}$ &$0$
\\[0.1 cm]\hline
\end{tabular}}
}
\end{center}}
\vspace{0.1cm}
\par
The form of the curvature tensors is similar to the general case with some geometric objects, namely, ${T}^{\alpha}_{\mu\nu}$ and ${P}^{\alpha}_{\mu\nu}$, loosing their dependence on $y$.

\subsection{FP-Minkowskian space}
\begin{definition}
A Finsler manifold (M,\,F) is said to be locally Minkowskian if the $h$- and $hv$- Cartan curvature tensors both vanish: $\overcirc{P}\!^{\alpha}_{\mu\nu \sigma}=0 =\, \overcirc{R}\!^{\alpha}_{\mu\nu \sigma}$.\end{definition}

We have the following useful characterization of locally Minkowskian spaces \cite{LDG}.
A Finsler manifold $(M, F)$ is locally Minkowskian if, and only if, at each $p \in M$ there exists a local coordinate system $(x^i, y^i)$ on $TM$ such that $F$ is a function of $y^i$ only. We refer to this local coordinate system as the \textbf{natural} coordinate system. It can be shown that $F$ is a function of $y^i$ only if, and only if, the Finsler metric $g_{\mu\nu}$ is a function of $y^i$ only. This follows from the fact that $F^{2}$ is expressed in the form $F^{2}= g_{\mu\nu}\, y^{\mu}\,y^{\nu}$. This implies, in particular, that $\,\overcirc{C}\!^{\alpha}_{\mu \nu} $ are functions of $y^{i}$ only. Moreover, as can be checked \cite{FB},
$$ g_{\mu \nu}(x, y) \equiv g_{\mu \nu}(y) \text{ in some coordinate system }
 \Longleftrightarrow \, \overcirc{C}\!_{\mu \nu {o \atop
|} \beta}^{\alpha} =0 \text{ and }  \overcirc{R}\!^{\alpha}_{\mu \nu \beta} =0.$$
\begin{remark}
\em{ The dependence of a tensor (or a linear connection) on $y$ only is not coordinate independent. If a tensor (or a linear connection) is a function of $y$ only in some coordinate system then this does not necessarily imply that it remains a function of $y$ in other coordinate systems. This is because the transformation formula involves $p^{\alpha'}_{\alpha}$ which are functions of $x$ only. On the other hand, for the same reason, the dependence on $x$ is coordinate independent.}
\end{remark}
\begin{theorem}
Let $(M, \,\undersym{\lambda}{i}(x,\,y),\,F(x,\,y))$ be an FP-Minkowskian manifold. Then, in the natural coordinate system, the following hold:
\begin{description}
  \item[(a)] The Barthel connection $N^{\alpha}_{\beta}$ vanishes.
      \item[(b)] The $\delta$-Christoffel symbols $\overcirc{\Gamma}\!^{\alpha}_{\mu\nu}$ vanish.
\item[(c)] The Berwald connection vanishes whereas the Cartan connection reduces to $(0, 0, \, \overcirc{C}\!^{\alpha}_{\mu\nu})$.
  \item[(d)] The horizontal covariant derivatives with respect to the Cartan and  Berwald connections reduce to  partial differentiation.
  \item[(e)] The canonical and the dual connections are reduced to $(A^{\alpha}_{\mu\nu}, 0, {C}^{\alpha}_{\mu\nu})\text{ and }  (A^{\alpha}_{\nu\mu}, 0, {C}^{\alpha}_{\nu\mu}).$
\item[(f)] The horizontal counterpart of the canonical and the dual connections have the same form as in the classical AP-space. (However, in the  classical AP-context the $\lambda$'s are function of $x$ only whereas in the FP-Minkowskian space the $\lambda$'s are function of $x$ and $y$).
\item[(g)] The horizontal covariant derivatives with respect to the canonical and the dual connections have the same form as in the classical AP-space.
\end{description}
\end{theorem}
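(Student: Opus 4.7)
The plan is to exploit the defining property of the natural coordinate system, namely that $F$ is a function of $y$ alone and hence, as recalled in the remark preceding the theorem, the Finsler metric $g_{\mu\nu}$ is also a function of $y$ alone; equivalently $\partial_\alpha g_{\mu\nu}=0$. All seven assertions then follow by a cascade of substitutions into the formulas of Sections~1 and 2, in the order dictated by their mutual dependencies; no new identities are needed.

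First I would establish \textbf{(a)}: since $\partial_\alpha g_{\mu\nu}=0$ in the natural chart, the formal Christoffel symbols $\gamma^{\alpha}_{\mu\nu}$ vanish identically, so the canonical spray $G^{\alpha}=\tfrac12\gamma^{\alpha}_{\mu\nu}y^\mu y^\nu$ is zero, and so is its vertical derivative $N^{\alpha}_\beta=\dot\partial_\beta G^{\alpha}$, which is precisely the Barthel connection. The crucial corollary for what follows is that $\delta_\mu=\partial_\mu-N^{\alpha}_\mu\dot\partial_\alpha=\partial_\mu$. For \textbf{(b)}, substituting $\delta_\mu=\partial_\mu$ into the definition of $\overcirc{\Gamma}\!^{\alpha}_{\mu\nu}$ and again using $\partial_\alpha g_{\mu\nu}=0$ gives $\overcirc{\Gamma}\!^{\alpha}_{\mu\nu}=0$. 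For \textbf{(c)}, the Berwald coefficients $G^{\alpha}_{\mu\nu}=\dot\partial_\nu N^{\alpha}_\mu$ vanish by (a), so the Berwald triple is $(0,0,0)$; combining (a) and (b), the Cartan triple $(\overcirc{\Gamma}\!^{\alpha}_{\mu\nu},N^{\alpha}_\mu,\overcirc{C}\!^{\alpha}_{\mu\nu})$ reduces to $(0,0,\overcirc{C}\!^{\alpha}_{\mu\nu})$. Item \textbf{(d)} is then immediate: the $h$-covariant derivative $X^{\alpha}_{\mu\,|\,\nu}=\delta_\nu X^{\alpha}_\mu+X^{\eta}_\mu F^{\alpha}_{\eta\nu}-X^{\alpha}_\eta F^{\eta}_{\mu\nu}$ collapses to $\partial_\nu X^{\alpha}_\mu$ for both the Cartan and the Berwald connections.

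For \textbf{(e)}, I invoke the relations expressing the Cartan connection in terms of the canonical one, $\Gamma^{\alpha}_{\mu\nu}=\overcirc{\Gamma}\!^{\alpha}_{\mu\nu}+A^{\alpha}_{\mu\nu}$ and $C^{\alpha}_{\mu\nu}=\overcirc{C}\!^{\alpha}_{\mu\nu}+B^{\alpha}_{\mu\nu}$ (equivalently, the contortion relations). By (b) the first reduces to $\Gamma^{\alpha}_{\mu\nu}=A^{\alpha}_{\mu\nu}$; combined with $N^{\alpha}_\mu=0$ from (a), this gives the canonical triple $(A^{\alpha}_{\mu\nu},0,C^{\alpha}_{\mu\nu})$. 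The dual triple $(A^{\alpha}_{\nu\mu},0,C^{\alpha}_{\nu\mu})$ then follows by transposing the lower indices per the definition of $\widetilde{D}$. Finally, \textbf{(f)} and \textbf{(g)} are read off from the explicit formula $\Gamma^{\alpha}_{\mu\nu}=\undersym{\lambda}{i}^{\alpha}(\delta_\nu\undersym{\lambda}{i}_\mu)$ and the $h$-covariant derivative formula: since $\delta_\nu=\partial_\nu$, both take the same syntactic shape as in the classical AP-geometry, the sole substantive difference being that the $\lambda$'s are still allowed to depend on $y$.

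The step that most deserves care is not any individual calculation but the preliminary bookkeeping, emphasised in the remark immediately preceding the theorem, that the $x$-independence of $g_{\mu\nu}$ is a coordinate-dependent property; every item of the theorem is correspondingly restricted to the natural chart. Once this is acknowledged, no genuine obstacle remains---the proof is a disciplined substitution of $\partial_\alpha g_{\mu\nu}=0$ into the formulas collected in Sections~1 and 2.
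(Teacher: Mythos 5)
Your proposal is correct and follows essentially the same route as the paper: vanishing of $\partial_\mu g_{\nu\eta}$ in the natural chart gives $\gamma^{\alpha}_{\mu\nu}=0$, hence $G^{\alpha}=0$ and $N^{\alpha}_{\beta}=0$, and the remaining items follow by substitution into the formulas of Sections 1 and 2, exactly as in the paper's treatment of (a), (c), (e) (the paper declares (d), (f), (g) clear, which you simply spell out). The one point of divergence is (b): you obtain $\overcirc{\Gamma}\!^{\alpha}_{\mu\nu}=0$ directly from $\delta_\mu=\partial_\mu$ (a consequence of (a)) together with $\partial_\mu g_{\nu\eta}=0$, whereas the paper deduces it from the tensorial identity $\overcirc{P}\!^{\alpha}_{\mu\nu}=G^{\alpha}_{\mu\nu}-\,\overcirc{\Gamma}\!^{\alpha}_{\mu\nu}=y^{\beta}\,\overcirc{C}\!^{\alpha}_{\mu\nu{o\atop|}\beta}=0$, valid in all coordinate systems for a locally Minkowskian space, combined with $G^{\alpha}_{\mu\nu}=0$; your version is more elementary and equally valid, while the paper's has the advantage of isolating a coordinate-independent statement ($\overcirc{P}\!^{\alpha}_{\mu\nu}=0$) before specializing to the natural chart. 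Your closing caution that the $y$-only dependence of $g_{\mu\nu}$ is chart-dependent matches the paper's Remark preceding the theorem, so no gap remains.
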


\begin{proof} We only prove $\textbf{(a), (b), (c), (e)}$. The rest is clear.
\begin{description}
  \item [(a)] Since the metric $g_{\mu \nu}(x, y)$ is a function of y only, it follows that $ \gamma^{\alpha}_{\mu \nu} = 0$. This implies that the canonical spray $G^{\alpha}=0$.
      Consequently, the Barthel connection $N^{\alpha}_{\beta}:= \dot{\partial}_{\beta} G^{\alpha}$ vanishes.
  \item[(b)] The $(v)hv$-Cartan torsion $\overcirc{P}\!^{\alpha}_{\mu \nu} =G^{\alpha}_{\mu\nu} -\, \overcirc{\Gamma}\!^{\alpha}_{\mu\nu}= y^{\beta} \overcirc{C}\!^{\alpha}_{\mu \nu{o \atop|}\beta} =0$ in all coordinate systems. This implies, in particular, that $G^{\alpha}_{\mu\nu} -\,\overcirc{\Gamma}\!^{\alpha}_{\mu\nu}=0$ in natural coordinate system, i.e.  $G^{\alpha}_{\mu\nu}=\, \overcirc{\Gamma}\!^{\alpha}_{\mu\nu}$ in natural coordinate system. But by $(a)\,G^{\alpha}_{\mu\nu}=0$ hence $\, \overcirc{\Gamma}\!^{\alpha}_{\mu\nu}=0$ in natural coordinate system.
 \item [(c)]This is because $\,G^{\alpha}_{\mu\nu}= \dot{\partial}_{\eta}N^{\alpha}_{\beta}=0$ (by \textbf{(a)}) and
 $\, \overcirc{\Gamma}\!^{\alpha}_{\mu\nu}=0$ (by \textbf{(b)}).
\item [(e)]The vanishing of $\overcirc{\Gamma}\!^{\alpha}_{\mu \nu}$ forces $A^{\alpha}_{\mu\nu}$ to coincide with ${\Gamma}^{\alpha}_{\mu\nu}$ i.e. $A^{\alpha}_{\mu\nu}={\Gamma}^{\alpha}_{\mu\nu}= {\undersym{\lambda}{i}^{\alpha}}(x,\,y)\, \partial_{\nu}\, {\undersym{\lambda}{i}_{\mu}}(x,\,y)$.
\end{description}
\end{proof}
The next proposition follows from the fact that the vanishing of a tensor in some coordinate system implies its vanishing in all coordinate systems. In more detail:
\begin{proposition}
Let $(M, \,\undersym{\lambda}{i}(x,\,y),\,F(x,\,y))$ be an FP-Minkowskian manifold. Then we have:
\begin{description}
\item[(a)]The vertical covariant derivatives for all connections of the FP-Minkowskian space remain the same as in the general case.
\item[(b)] The $(v)h$- torsion $R_{\mu\nu}^{\alpha}$ vanishes for all connections of the FP-Minkowskian space.
\item[(c)] All curvature and torsion tensors of the Berwald connection vanish.
\item[(d)] The only surviving torsion tensors of the Cartan connection is the $h(hv)$-torsion $\overcirc{C}\!^{\alpha}_{\mu\nu}$ whereas the only surviving curvature tensors is the $v$-curvature tensors $\overcirc{S}\!^{\alpha}_{\mu\nu\beta}$.
\end{description}
\end{proposition}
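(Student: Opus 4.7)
The plan is to work throughout in the natural coordinate system guaranteed by local Minkowskianity, use Theorem 4.9 to collapse all connection coefficients to their simplified forms, and then invoke the tensorial character of each torsion or curvature to transfer the conclusions to arbitrary coordinate systems, exactly as the author flags in the sentence preceding the proposition.

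For part (a), I would simply observe that the vertical covariant derivative of any tensor is defined by (1.3), which involves only $\dot{\partial}_\nu$ and the $v$-coefficient. For each of the four connections the $v$-coefficient is unaltered by Minkowskianity: Cartan retains $\overcirc{C}^\alpha_{\mu\nu}$, canonical retains $C^\alpha_{\mu\nu}$, dual retains $C^\alpha_{\nu\mu}$, and Berwald retains $0$ (cf.\ Remark 4.8). Since the formula and the ingredients are unchanged, the vertical covariant derivative rules survive intact. For part (b), Theorem 4.9(a) gives $N^\alpha_\mu=0$ in the natural chart, so $R^\alpha_{\mu\nu}=\delta_\nu N^\alpha_\mu-\delta_\mu N^\alpha_\nu=0$ there; being a tensor, it vanishes in every chart, and by Corollary 3.4 this single $R^\alpha_{\mu\nu}$ serves all four connections.

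For part (c), Theorem 4.9(c) reduces the Berwald triple to $(0,0,0)$ in natural coordinates. Plugging into Definition 1.7 yields $\bar T=\bar C=\bar R=\bar P=\bar S=0$, and plugging into Definition 1.8 yields $\bar R^\alpha_{\mu\nu\sigma}=\bar P^\alpha_{\mu\nu\sigma}=\bar S^\alpha_{\mu\nu\sigma}=0$, since every term in those expressions is a polynomial in the vanishing Berwald coefficients and their $\delta$- or $\dot{\partial}$-derivatives (the $\delta$-derivative collapses to $\partial$ by $N=0$). Tensoriality of torsion and curvature then propagates the vanishing to all coordinate systems. For part (d), Theorem 4.9(c) reduces the Cartan triple to $(0,0,\overcirc{C}^\alpha_{\mu\nu})$ in the natural chart. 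Feeding this into Definition 1.7 gives: the $(h)h$-torsion vanishes (Cartan is $h$-symmetric), the $(v)h$-torsion vanishes by part (b), the $(v)hv$-torsion equals $\dot{\partial}_\nu N^\alpha_\mu-\overcirc{\Gamma}^\alpha_{\mu\nu}=0$, and the $(v)v$-torsion vanishes by $v$-symmetry; only $\overcirc{C}^\alpha_{\mu\nu}$ survives. For the curvatures, $\overcirc{R}^\alpha_{\mu\nu\sigma}$ and $\overcirc{P}^\alpha_{\mu\nu\sigma}$ vanish by the very definition of FP-Minkowskian (Definition 4.7), while $\overcirc{S}^\alpha_{\mu\nu\sigma}=\mathfrak{U}_{(\nu,\sigma)}\{\dot{\partial}_\sigma\overcirc{C}^\alpha_{\mu\nu}+\overcirc{C}^\eta_{\mu\sigma}\overcirc{C}^\alpha_{\eta\nu}\}$ is generically non-vanishing since $\overcirc{C}^\alpha_{\mu\nu}$ is not forced to vanish.

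The only genuine subtlety is methodological rather than computational: one must be careful that the coordinate-dependent simplifications from Theorem 4.9 are applied to \emph{tensorial} quantities before concluding they vanish everywhere. Since every torsion and curvature in the Finsler connection framework is intrinsically tensorial and each claim above either reduces to such a tensor or to the tautologically chart-independent formula in (a), this methodological check is the only hurdle and is handled uniformly by the remark preceding the proposition.
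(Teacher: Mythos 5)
Your proof is correct and follows essentially the paper's own route: the paper justifies this proposition with exactly the argument you spell out, namely collapsing all connection coefficients in the natural coordinate system (the paper's Theorem 4.10) and then invoking the fact that a tensor vanishing in one coordinate system vanishes in all, which is the single sentence the paper offers as proof. The only blemish is an off-by-one in your citations (what you call Theorem 4.9, Definition 4.7 and Remark 4.8 are the paper's Theorem 4.10, Definition 4.8 and Remark 4.9), which does not affect the mathematics.
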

Unlike Theorem $4.10$, the above proposition holds in all coordinate systems.

\bigskip

The next table provides a comparison concerning the connections of an FP-Minkowskian manifold and their associated torsion and curvature tensors.
\vspace{8pt}
{\begin{center}{{Table 3: Torsion and curvature tensors of an FP-Minkowskian manifold}}
\\[0.4 cm]
{
\small\fontsize{12}{15}\selectfont{\begin{tabular} {|l|c|c|c|c|}\hline
 {\textbf{}}&{{\fontsize{12}{12}\selectfont Cartan }}&
 { {\fontsize{12}{12}\selectfont Canonical }} &{{\fontsize{12}{12}\selectfont Dual }}
\\[0.1 cm]\hline
\multirow{2}{*}{{(h)h-torsion} $T^{\alpha}_{\mu\nu}$}&\multirow{2}{*}{$0$}&\multirow{2}{*}{${T}^{\alpha}_{\mu\nu}$}&\multirow{2}{*}{$  -{T}^{\alpha}_{\mu\nu}$}\\[0.1cm]
\multirow{2}{*}{{(h)hv-torsion}
$ C^{\alpha}_{\mu\nu}$}&\multirow{2}{*}{$\overcirc{C}\!^{\alpha}_{\mu\nu} - B^{\alpha}_{\mu \nu}$} &
\multirow{2}{*}{${C}^{\alpha}_{\mu\nu}$}&
\multirow{2}{*}{${C}^{\alpha}_{\nu\mu}$}
\\[0.3 cm]
{(v)h-torsion} $R^{\alpha}_{\mu\nu}$&$0$
&$0$&
$0$
\\[0.1 cm]
{(v)hv-torsion}
$P^{\alpha}_{\mu\nu}$&$0$&
  $-{\Gamma}^{\alpha}_{\mu\nu}$&$ -{\Gamma}^{\alpha}_{\nu\mu}$
\\[0.1 cm]
{(v)v-torsion} $S^{\alpha}_{\mu\nu}$&$0$ &${S}^{\alpha}_{\mu\nu}$&$-{S}^{\alpha}_{\mu\nu}$
\\[0.1 cm]\hline
{h-curvature} $R^{\alpha^{\phantom{2}}}_{\mu\nu \sigma}$& $0$&$0$&
${T}^{\alpha}_{\nu \sigma |\mu}$
\\[0.1 cm]
{hv-curvature} $P^{\alpha}_{\mu\nu \sigma}$& $0$&
$0$&$\widetilde{P}^{\alpha}_{\mu\nu \sigma}$
\\[0.1 cm]
{ v-curvature}
$S^\alpha_{\mu\nu\sigma}$&$\overcirc{S}\!^\alpha_{\mu\nu\sigma}$&
$0$&${S}^{\alpha}_{\nu \sigma ||\mu}$
\\[0.1 cm]\hline


\end{tabular}}
}
\end{center}}

\vspace{.1cm}
In the above table, $\widetilde{P}^{\alpha}_{\nu\mu\sigma}$ is given by
 $$\widetilde{P}^{\alpha}_{\nu\mu\sigma} = {S}^{\alpha}_{\nu\mu |\sigma} +
{T}^{\alpha}_{\sigma\nu ||\mu} - {S}^{\eta}_{\mu\nu}
{T}^{\alpha}_{\sigma\eta} + {S}^{\alpha}_{\mu\eta}
{T}^{\eta}_{\sigma\nu} + {T}^{\alpha}_{\eta\nu} \, {C}^{\eta}_{\sigma\mu} -
{\Gamma}^{\eta}_{\sigma\mu} \, {S}^{\alpha}_{\eta\nu}.$$ Moreover, $\overcirc{S}\!^{\alpha}_{\mu\sigma\nu}$ remains as in the general case (Proposition 3.10).


\subsection{FP-Riemannian space}
\hspace{12pt}
We introduce the following definition.
\begin{definition}
An FP-manifold is said to be an FP-Riemannian space if the vertical counterpart ${C}^{\alpha}_{\mu\nu}$  of the canonical connection vanishes.
\end{definition}
As can be easily checked,
\begin{proposition}
Let $(M, \,\undersym{\lambda}{i}(x,\,y),\,F(x,\,y))$ be an FP-Riemannian space. Then the following hold:
\begin{description}
  \item[(a)] The $\lambda$'s are function of the positional argument $x$ only. Consequently, so are $\Gamma^{\alpha}_{\mu\nu} \text{ and } g_{\mu \nu}$ \footnote{ It should be noted that $g_{\mu\nu}$ being functions of $x$ does not necessarily imply that the $\lambda$'s are functions of $x$. The converse trivially holds.}.
  \item[(b)] The Cartan tensor vanishes.
  \item[(c)] The canonical spray is given by $G^{\alpha} = \frac{1}{2} \gamma^{\alpha}_{\mu\nu}(x)\, y^{\mu} \, y^{\nu}$. Consequently, $$ \,G^{\alpha}_{\sigma\beta}(x)= \gamma^{\alpha}_{\sigma\beta}(x)=\, \overcirc{\Gamma}\!^{\alpha}_{\sigma\beta}(x).$$
\end{description}
\end{proposition}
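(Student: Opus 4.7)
The plan is to propagate the single hypothesis $C^{\alpha}_{\mu\nu}=0$ through the formulas developed earlier and collect the successive consequences in the natural order (a) $\Rightarrow$ (b) $\Rightarrow$ (c).

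For \textbf{(a)}, the key observation is the explicit formula (\ref{canonical GAP}) for the canonical vertical counterpart, namely $C^{\alpha}_{\mu\nu}= \undersym{\lambda}{i}^{\alpha}\,\dot{\partial}_{\nu}\,\undersym{\lambda}{i}_{\mu}$. First I would contract this with $\undersym{\lambda}{j}_{\alpha}$ and invoke the duality relation $\undersym{\lambda}{i}^{\alpha}\,\undersym{\lambda}{j}_{\alpha}=\delta_{ij}$ to conclude $\dot{\partial}_{\nu}\,\undersym{\lambda}{j}_{\mu}=0$, so each $\undersym{\lambda}{j}$ depends on $x$ only. The $y$-independence of $g_{\mu\nu}=\undersym{\lambda}{i}_{\mu}\,\undersym{\lambda}{i}_{\nu}$ is then immediate, and for $\Gamma^{\alpha}_{\mu\nu}=\undersym{\lambda}{i}^{\alpha}\,\delta_{\nu}\,\undersym{\lambda}{i}_{\mu}$ I would use $\delta_{\nu}=\partial_{\nu}-N^{\alpha}_{\nu}\dot{\partial}_{\alpha}$ and observe that the vertical piece kills the $\lambda$'s, leaving $\Gamma^{\alpha}_{\mu\nu}=\undersym{\lambda}{i}^{\alpha}\,\partial_{\nu}\,\undersym{\lambda}{i}_{\mu}$, a function of $x$ alone. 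The only subtle point here is the remark in the footnote: the converse (that $g_{\mu\nu}$ being $y$-independent forces the $\lambda$'s to be $y$-independent) is \emph{not} used; only the direct implication is needed.

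Part \textbf{(b)} is then a one-line consequence: by definition (\ref{Cartan tensor}) the Cartan tensor is $C_{\beta\mu\nu}=\tfrac12\,\dot{\partial}_{\beta}g_{\mu\nu}$, and since (a) gives $g_{\mu\nu}=g_{\mu\nu}(x)$ this identically vanishes.

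For \textbf{(c)}, the idea is to feed the $y$-independence of $g_{\mu\nu}$ into the definitions (\ref{formal Christoffel}), (\ref{spray}), (\ref{Nonlinear}) and (\ref{berwald linear}) of the formal Christoffels, the canonical spray and the Barthel/Berwald coefficients. Since the partial derivatives $\partial_{\mu}g_{\nu\eta}$ depend only on $x$, so do $\gamma^{\alpha}_{\mu\nu}(x)$; substituting into $G^{\alpha}=\tfrac12\,\gamma^{\alpha}_{\mu\nu}(x)\,y^{\mu}y^{\nu}$ gives the stated spray. Differentiating vertically yields $N^{\alpha}_{\beta}=\gamma^{\alpha}_{\beta\nu}(x)\,y^{\nu}$ and one more vertical derivative gives $G^{\alpha}_{\sigma\beta}=\gamma^{\alpha}_{\sigma\beta}(x)$. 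Finally, to identify $\gamma^{\alpha}_{\sigma\beta}$ with $\overcirc{\Gamma}^{\alpha}_{\sigma\beta}$, I would compare the defining formula of the $\delta$-Christoffel symbols with that of the formal Christoffels and note that the two differ only by the vertical correction $N^{\alpha}_{\mu}\dot{\partial}_{\alpha}g_{\nu\epsilon}$, which vanishes because $g$ is $y$-independent.

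No step here is genuinely an obstacle; the whole argument is a cascade of substitutions. The only place requiring a moment of care is the very first contraction in (a), where one must recognise that a quadratic expression in the $\lambda$'s collapses to the vertical derivative of a single $\lambda$ thanks to the duality identities, and throughout (c) one must keep straight that the $x$-dependence alone is coordinate invariant, which is exactly what allows the reduction $\delta \leadsto \partial$ to be carried out globally.
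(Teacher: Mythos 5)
Your proof is correct and is essentially the same routine verification the paper has in mind: the paper gives no written argument beyond ``As can be easily checked,'' and your chain --- contracting $C^{\alpha}_{\mu\nu}=\undersym{\lambda}{i}^{\alpha}\,\dot{\partial}_{\nu}\,\undersym{\lambda}{i}_{\mu}=0$ with $\undersym{\lambda}{j}_{\alpha}$ to get $\dot{\partial}_{\nu}\,\undersym{\lambda}{j}_{\mu}=0$, then feeding the resulting $y$-independence into $g_{\mu\nu}$, $\Gamma^{\alpha}_{\mu\nu}=\undersym{\lambda}{i}^{\alpha}\,\partial_{\nu}\,\undersym{\lambda}{i}_{\mu}$, the Cartan tensor, $\gamma^{\alpha}_{\mu\nu}$, $G^{\alpha}$, $N^{\alpha}_{\beta}$, $G^{\alpha}_{\sigma\beta}$ and $\overcirc{\Gamma}{}^{\alpha}_{\mu\nu}$ --- is exactly that verification, with the footnote correctly handled since only the direct implication is used. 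No gaps.
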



\begin{proposition}
Every FP-Riemannian space is an FP-Berwald space.
\end{proposition}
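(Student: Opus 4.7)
The plan is to verify the two defining conditions of an FP-Berwald space one by one, reading each one off from Proposition 4.13 on FP-Riemannian spaces. Recall that by Definition 4.4, an FP-space is FP-Berwald iff the $hv$-Cartan curvature tensor $\overcirc{P}\!^{\alpha}_{\mu\nu\sigma}$ vanishes and the canonical horizontal coefficients $\Gamma^{\alpha}_{\mu\nu}$ are functions of $x$ only.

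First I would handle the second condition, which is essentially immediate: Proposition 4.13(a) states that in an FP-Riemannian space the $\lambda$'s depend on $x$ only, and in particular $\Gamma^{\alpha}_{\mu\nu} = \undersym{\lambda}{i}^{\alpha}(\delta_{\nu}\undersym{\lambda}{i}_{\mu})$ reduces (since then $\delta_{\nu}=\partial_{\nu}$ effectively acts as the partial in $x$ alone) to $\undersym{\lambda}{i}^{\alpha}(x)\,\partial_{\nu}\undersym{\lambda}{i}_{\mu}(x)$, a function of $x$ only. That gives the Berwald-type requirement on $\Gamma^{\alpha}_{\mu\nu}$.

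Second, I would verify $\overcirc{P}\!^{\alpha}_{\mu\nu\sigma}=0$ using Proposition 4.13(c), which says that $\overcirc{\Gamma}\!^{\alpha}_{\sigma\beta}(x) = \gamma^{\alpha}_{\sigma\beta}(x)$ in the FP-Riemannian setting. By the characterization invoked in the proof of Proposition 4.5(a), namely
\[
\overcirc{P}\!^{\alpha}_{\mu\nu\sigma}=0 \iff \overcirc{\Gamma}\!^{\alpha}_{\mu\nu}(x,y)\equiv\overcirc{\Gamma}\!^{\alpha}_{\mu\nu}(x),
\]
the $y$-independence of $\overcirc{\Gamma}\!^{\alpha}_{\mu\nu}$ forces $\overcirc{P}\!^{\alpha}_{\mu\nu\sigma}=0$. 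As a direct cross-check, one can also expand $\overcirc{P}\!^{\alpha}_{\mu\nu\sigma} = \dot{\partial}_{\sigma}\overcirc{\Gamma}\!^{\alpha}_{\mu\nu} - \overcirc{C}\!^{\alpha}_{\mu\sigma{o\atop|}\nu}+\overcirc{C}\!^{\alpha}_{\mu\eta}\overcirc{P}\!^{\eta}_{\nu\sigma}$ and use Proposition 4.13(b) (vanishing Cartan tensor, hence $\overcirc{C}\!^{\alpha}_{\mu\nu}=0$) together with the $x$-only dependence of $\overcirc{\Gamma}\!^{\alpha}_{\mu\nu}$ to see that every term vanishes.

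Combining the two conditions gives the claim. No genuine obstacle is anticipated here; the whole statement is essentially a bookkeeping consequence of Proposition 4.13, with Proposition 4.5(a) doing the real work of converting $y$-independence of $\overcirc{\Gamma}$ into vanishing of $\overcirc{P}$. The only mild subtlety to keep an eye on is making sure one uses the correct version of the $h$-Christoffel symbols (the canonical $\Gamma^{\alpha}_{\mu\nu}$ of the FP-space for the Berwald condition, and the Cartan/Miron coefficients $\overcirc{\Gamma}\!^{\alpha}_{\mu\nu}$ when invoking the characterization of $\overcirc{P}\!^{\alpha}_{\mu\nu\sigma}=0$), which is transparent from the two parts of Proposition 4.13.
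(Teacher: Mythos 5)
Your proposal is correct and takes essentially the same route as the paper: the paper's one-line proof rests precisely on the two facts you use, namely that both ${\Gamma}^{\alpha}_{\mu\nu}$ and $\overcirc{\Gamma}\!^{\alpha}_{\mu\nu}$ are functions of $x$ only (from the preceding proposition on FP-Riemannian spaces), with the vanishing of $\overcirc{P}\!^{\alpha}_{\mu\nu\sigma}$ coming from the same characterization of $y$-independence of $\overcirc{\Gamma}\!^{\alpha}_{\mu\nu}$ invoked in the FP-Berwald discussion. Your explicit expansion of the $hv$-curvature using the vanishing Cartan tensor is merely a spelled-out cross-check of that same argument.
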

\begin{proof}
This follows directly from the fact that both ${\Gamma}^{\alpha}_{\mu\nu}$ and $\overcirc{\Gamma}\!^{\alpha}_{\mu\nu}$ are functions of $x$ only.
\end{proof}
\begin{theorem}Let $(M, \,\undersym{\lambda}{i}(x,\,y),\,F(x,\,y))$ be an FP-Riemannian space. Then
\begin{description}
  \item[(a)] The Cartan and Berwald connections coincide both reducing to $(\gamma^{\alpha}_{\mu\nu}(x), N^{\alpha}_{\mu}(x, y),0)$.
  \item[(b)] The canonical and dual connections reduce respectively to $({\Gamma}^{\alpha}_{\mu\nu}(x), N^{\alpha}_{\mu}(x, y),0)$ and \\
  $(\widetilde{\Gamma}^{\alpha}_{\mu\nu}(x), N^{\alpha}_{\mu}(x, y),0)$.
  \item[(c)] The horizontal contortion tensor $A^{\alpha}_{\mu \nu}$ is a function of the positional argument $x$ only whereas the vertical contortion tensor $B^{\alpha}_{\mu \nu}$ vanishes.
\end{description}
\end{theorem}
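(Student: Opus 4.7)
The plan is to deduce every assertion by combining Propositions 4.12 and 4.13 with the explicit formulas (2.9), (2.12), (2.13) for the canonical, dual and contortion tensors, together with the characterizations of the Cartan and Berwald connections from Section~1. Since Proposition 4.13 already tells us that an FP-Riemannian space is automatically an FP-Berwald space, the strategy is essentially to harvest the consequences of \emph{both} the Riemannian assumption (vanishing of $C^{\alpha}_{\mu\nu}$, whence $\undersym{\lambda}{i}^{\alpha}(x,y)\equiv\undersym{\lambda}{i}^{\alpha}(x)$ by Proposition 4.12(a)) and the Berwald property.

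For part \textbf{(a)}, I would first invoke Proposition 4.5(a), which under the Berwald condition identifies the horizontal coefficients of the Cartan and Berwald connections: $\overcirc{\Gamma}\!^{\alpha}_{\mu\nu}(x)=G^{\alpha}_{\mu\nu}(x)$. Proposition 4.12(c) then pins this common value down to $\gamma^{\alpha}_{\mu\nu}(x)$. For the vertical components, the Berwald connection has zero third slot by definition (Theorem 1.10), while $\overcirc{C}\!^{\alpha}_{\mu\nu}=\tfrac12 g^{\alpha\eta}(\dot{\partial}_\mu g_{\nu\eta}+\dot{\partial}_\nu g_{\mu\eta}-\dot{\partial}_\eta g_{\mu\nu})$ vanishes because $g_{\mu\nu}$ depends only on $x$ by Proposition 4.12(a). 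The nonlinear connection is $N^{\alpha}_{\mu}=\dot{\partial}_\mu G^{\alpha}=\gamma^{\alpha}_{\mu\beta}(x)\,y^{\beta}$, which is a genuine function of both $x$ and $y$.

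For part \textbf{(b)}, I would plug the $x$-only $\undersym{\lambda}{i}$'s into the explicit formulas (2.9). The vertical coefficient $C^{\alpha}_{\mu\nu}=\undersym{\lambda}{i}^{\alpha}\dot{\partial}_{\nu}\undersym{\lambda}{i}_{\mu}$ vanishes (this is exactly the defining condition of an FP-Riemannian space and also drops out again from $\lambda=\lambda(x)$). For the horizontal coefficient, the vertical correction in $\delta_\nu=\partial_\nu-N^{\alpha}_\nu\dot{\partial}_\alpha$ annihilates $\undersym{\lambda}{i}_\mu(x)$, hence $\Gamma^{\alpha}_{\mu\nu}=\undersym{\lambda}{i}^{\alpha}(x)\,\partial_{\nu}\undersym{\lambda}{i}_{\mu}(x)$ is purely a function of $x$. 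The dual connection is then obtained immediately from (2.13): $\widetilde{\Gamma}^{\alpha}_{\mu\nu}=\Gamma^{\alpha}_{\nu\mu}(x)$ and $\widetilde{C}^{\alpha}_{\mu\nu}=C^{\alpha}_{\nu\mu}=0$, with the same Barthel nonlinear connection.

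Part \textbf{(c)} is then a one-line consequence of (2.12): $A^{\alpha}_{\mu\nu}=\Gamma^{\alpha}_{\mu\nu}-\overcirc{\Gamma}\!^{\alpha}_{\mu\nu}$, and both summands have just been shown to be functions of $x$ only, so $A^{\alpha}_{\mu\nu}=A^{\alpha}_{\mu\nu}(x)$; likewise $B^{\alpha}_{\mu\nu}=C^{\alpha}_{\mu\nu}-\overcirc{C}\!^{\alpha}_{\mu\nu}=0-0=0$. I do not anticipate any genuine obstacle: the entire theorem is a bookkeeping corollary once Propositions 4.12 and 4.13 are in hand. The only spot requiring a moment of care is verifying that $\delta_\nu$ collapses to $\partial_\nu$ on $x$-dependent quantities, which is immediate from the definition of $\delta_\nu$ and the annihilation of $x$-functions by $\dot{\partial}_\alpha$.
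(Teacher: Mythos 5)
Your proof is correct and follows exactly the route the paper intends: the paper states this theorem without proof, as an immediate consequence of the two preceding propositions (the $\lambda$'s, and hence $g_{\mu\nu}$, $\gamma^{\alpha}_{\mu\nu}$ and $\Gamma^{\alpha}_{\mu\nu}$, depend on $x$ only, and every FP-Riemannian space is FP-Berwald), and your argument simply carries out that bookkeeping using the explicit formulas for the canonical, dual and contortion objects together with the definitions of the Cartan and Berwald connections. The only blemishes are harmless off-by-one citation slips (the propositions you call 4.12 and 4.13 are numbered 4.13 and 4.14 in the paper, the contortion definition is equation (2.11) rather than (2.12), and the Berwald connection's vanishing third slot comes from Theorem 1.13), none of which affect the mathematics.
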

The torsion and curvature tensors of an FP-Riemmannian space are summarized in the following table.
\vspace{8pt}
{\begin{center}{{Table 4: Torsion and curvature tensors of an FP-Riemmannian space}}
\\[0.3 cm]
{
\small\fontsize{10}{15}\selectfont{\begin{tabular} {|l|c|c|c|c|c|}\hline
 {\textbf{}}&{{\fontsize{10}{10}\selectfont Cartan }}&
 { {\fontsize{10}{10}\selectfont Canonical }} &{{\fontsize{10}{10}\selectfont Dual }} &{{\fontsize{10}{10}\selectfont Berwald }}\\[0.1cm]\hline
\multirow{2}{*}{{(h)h-torsion} $T^{\alpha}_{\mu\nu}$}&\multirow{2}{*}{$0$} &\multirow{2}{*}{${T}^{\alpha}_{\mu\nu}$} &\multirow{2}{*}{$  -{T}^{\alpha}_{\mu\nu}$}&\multirow{2}{*}{$0$}\\[0.1cm]
\multirow{2}{*}{{(h)hv-torsion}
$ C^{\alpha}_{\mu\nu}$}&\multirow{2}{*}{$0$} &
\multirow{2}{*}{$0$}&\multirow{2}{*}{$0$}&\multirow{2}{*}{$0$}
\\[0.3 cm]
{(v)h-torsion} $R^{\alpha}_{\mu\nu}$&${R}^{\alpha}_{\mu\nu}$
&${R}^{\alpha}_{\mu\nu}$&
${R}^{\alpha}_{\mu\nu}$&${R}^{\alpha}_{\mu\nu}$\\[0.1 cm]
{(v)hv-torsion}
$\overcirc{P}\!^{\alpha}_{\mu\nu}$&$0$&
  ${P}^{\alpha}_{\mu\nu}$&${P}^{\alpha}_{\nu\mu}$&$0$
\\[0.1 cm]
{(v)v-torsion} $S^{\alpha}_{\mu\nu}$&$0$ &$0$&$0$&$0$
\\[0.1cm]\hline
{h-curvature} $R^{\alpha}_{\mu\nu \sigma}$& $\overcirc{R}\!^{\alpha}_{\mu\nu \sigma}$&
$0$&$T^{\alpha}_{\nu\sigma|\mu}$&$\bar{R}^{\alpha}_{\mu\nu \sigma}$
\\[0.1 cm]
{hv-curvature} $P^{\alpha}_{\mu\nu \sigma}$& $0$&$0$&$0$&$0$
\\[0.1 cm]
{v-curvture}
$S^\alpha_{\mu\nu\sigma}$&$0$&
$0$&$0$&$0$
\\[0.1 cm]\hline


\end{tabular}}
}
\end{center}}
In the above table the $h$-curvature tensor of the Cartan connection can be written in the following two forms
$${\overcirc{R}\!^{\alpha}_{\mu \nu \sigma}} := \mathfrak{U}_{(\nu,\sigma)} \{ \delta_{\sigma} \gamma^{\alpha}_{\mu \nu} + \gamma^{\eta}_{\mu \nu} \, \gamma^{\alpha}_{\eta \sigma}\}$$
 and
 $$\overcirc{R}\!^{\alpha}_{\mu \nu \sigma} = \mathfrak{U}_{(\sigma,\nu)} \{A^{\alpha}_{\mu \sigma | \nu} + A^{\eta}_{\mu \nu} \, A^{\alpha}_{\eta\sigma}\} +A^{\alpha}_{\mu \eta} {T}^{\eta}_{\sigma \nu}$$
 and the $h$-curvature tensor of the dual connection has the form $$ \bar{R}^{\alpha}_{\mu\nu \sigma}= \mathfrak{U}_{(\nu,\sigma)} \{{P}^{\alpha}_{\mu \nu | \sigma} + {P}^{\eta}_{\mu \nu} \, {P}^{\alpha}_{\eta\sigma}\} +{P}^{\alpha}_{\mu \eta} {T}^{\eta}_{\nu \sigma}.$$
\par
It should be noted that the horizontal geometric objects of the FP-Riemmannian space are identical to the corresponding geometric objects in classical AP-space. Consequently, the classical AP-space can be recovered from the FP-Riemmannian space.


\begin{flushleft}
    \textbf{Concluding remarks}
\end{flushleft}
\hspace{12pt}
In this paper, we have constructed a Finsler space from the building blocks of the GAP-space. We refer to the resulting space as a Finslerized Parallelizable  space or, in short, an FP-space. We conclude the paper by the following comments:
\begin{itemize}
  \item Five Finsler connections are studied with their associated torsion and curvature tensors, namely, the canonical, dual, Miron, Cartan and Berwald connections. The first three connections are related to the GAP-structure whereas the last two connections are related to the Finsler structure. Using the fact that the nonlinear connection of the GAP-space is the Barthel connection, we prove that the Miron connection coincides with the Cartan connection. Consequently, in an FP-space, we actually have only four different Finsler
       connections.

\item All curvature tensors of the different connections defined in the FP-space are derived. The vanishing of the three curvature tensors of the canonical connection enables us to express the other curvature tensors in terms of the torsion and contortion tensors of the canonical connection.
    This property is also satisfied in the context of classical AP-geometry; all curvature tensors of an AP-space can be expressed in terms of the torsion tensor of the AP-space.\footnote{In the classical AP-space, we have one torsion tensor, namely, $\Lambda^{\alpha}_{\mu\nu} = \Gamma^{\alpha}_{\mu\nu}- \Gamma^{\alpha}_{\nu\mu}$, where $\Gamma^{\alpha}_{\mu\nu}$ is the canonical connection. We refer to this torsion tensor as the torsion tensor of the AP-space.}

 \item Some special FP-spaces are investigated
 , namely, FP-Berwald, FP-Minkowskian and FP-Riemannian  manifolds. In the FP-Riemannian manifold the metric is a function of $x$ only (in all coordinate systems) whereas in the FP-Minkowskian manifold the metric is a function of $y$ only (in some coordinate system). Roughly speaking, the two cases represent two \lq \lq limiting" cases.

\item In the FP-Landsberg and the FP-Berwald manifolds the number of different connections is four whereas in the FP-Riemannian and the FP-Minkowskian manifolds it is reduced to three. In fact, the following proper inclusions hold: FP-Riemannian $\subset$ FP-Berwald $\subset$ FP-Landsberg and the FP-Minkowskian $\subset$ FP-Landsberg. In these special cases, many of the formulas obtained for the torsion and curvature tensors are considerably simplified. In particular, many geometric objects existing in the general case vanish.

\item Since an FP-space combines a Finsler structure and a GAP-structure, it is potentially more suitable for dealing with  physical phenomena that might escape the explanatory power of either GAP-geometry or Finsler geometry. Consequently, the FP-geometry may provide a better geometric structure for physical applications. Moreover, the structure of an FP-space would facilitate the following:
    \begin{description}
      \item[(a)] The use of the advantages of both machineries of GAP and Finsler geometries.
      \item[(b)] The gain of more information on the infrastructure of physical phenomena studied in AP-geometry.
      \item[(c)] Attribution of some physical meaning to objects of Finsler geometry, since AP-geometry
has been used in many physical applications.
      \item[(d)] Some of the problems of the general theory of relativity, such as the flatness of the rotation curves of spiral galaxies and some problems concerning the interpretation of the accelerated expansion of the Universe, are not satisfactorily explained in the context of Riemannian geometry. Theories in which the gravitational potential depends on both position and direction argument may be needed to overcome such difficulties. This is one of the aims motivating the present work.
    \end{description}

\item The paper is not intended to be an end in itself. In it, we try to construct a geometric structure suitable for dealing with and describing physical phenomena. The use of the classical AP-geometry and Finsler geometry in physical applications (for example \cite{Developments},\cite{geometry of Lag. sp.}) made us choose these two geometries as a guide line. In the present paper, we have focused on the study of the metric, some FP-connections and some special FP-spaces. The study of other geometric objects and special FP-spaces needed in physical applications will be given in the coming part of this work.
\end{itemize}
\vspace{5pt}
\noindent{Acknowledgment}

\vspace{3pt}
The authors would like to thank Professor M. I. Wanas for many useful discussions.


\bibliographystyle{plain}

\begin{thebibliography}{100}
\bibitem{FB} D. Bao, S. S. Chern and Z. Shen, {\it An introduction to Riemann-Finsler geometry}, Graduate Texts in Mathematics, Springer, 2000.

\bibitem{Bejancu} A. Bejancu and H. R. Farran, \emph{Geometry of pseudo-Finsler submanifolds}, Kluwer Academic Publishers, 2000.


\bibitem{LDG} S. S. Chern, W. H. Chen and K. S. Lam, \emph{Lectures on differential geometry}, World Scientific, 2000.

\bibitem{Mats} M. Matsumoto, \emph{Foundations of Finsler geometry and special Finsler spaces,} Kaiseisha Press, 1986.

\bibitem{Mikhail}F. I. Mikhail and M. I. Wanas, \emph{A generalized field theory. I. Field equations}, Proc. Roy. Soc. London,
Ser. A \textbf{356} (1977), 471-481.
\bibitem{*} F. I. Mikhail, M. I. Wanas, E. I. Lashin and A. A. Hindawi, {\it Sphereically symmetric solutions in M{\o}ller's tetrad theory of gravitation}, Gen. Rel. Grav., \textbf{26} (1994), 869-876. arXiv: gr-qc/9409039.

\bibitem{R} R. Miron, {\it Metrical Finsler structures and special Finsler spaces}, J. Math. Kyoto Univ., \textbf{23} (1983), 219-224.

\bibitem{MC} R. Miron, {\it Compenduim on the geometry of Lagrange spaces}, from Handbook of Differential Geometry, Vol. II, edited by F. Dillen and L. Verstraelen, Elsevier, 2006, 437-512.


\bibitem{geometry of Lag. sp.} R. Miron and M. Anastasiei, \emph{The geometry of Lagrange spaces: Theorey and applications,} FTPH, Kluwer Academic Publishers, 1994.

\bibitem{Vector bundles} R. Miron and M. Anastasiei,\emph{Vertor bundles and Lagrange spaces with applications to relativity,} Geometry Balkan Press, Romania, 1997.

\bibitem{reiman} R. Miron, H. Shimada, S. V. Sabau and Hrimiuc, \emph{The geometry of Hamilton and Lagrange
spaces}, Kluwer Academic Publishers, 2001

\bibitem{Moller}C. M{\o}ller, \emph{On the crisis in the theory of gravitation and a possible solution}, Mat. Fys. Skr. Danske.
Vid. Selsk., \textbf{39} (13) (1978), 1-31.
\bibitem{**} Gamal G. L. Nashed, {\it Energy distrubtion of Kerr space-time using M{\o}ller energy mometum complex}, Il Nuovo Cimento B, \textbf{119} (2004), 967-974. arXiv: gr-qc/0507049.


\bibitem{TS} T. Sakagushi, {\it Parallelizable generalized Lagrange spaces}, Anal. Sti. Univ. AI. I.
Cuza, Iasi, Mat., \textbf{33}, \textbf{2} (1987), 127-136.

\bibitem{Developments} M. I. Wanas, \emph{Absolute parallelism geometry: Developments, applications and problems}, Stud. Cercet. Stiin., Ser. Mat., Univ. Bacau, \textbf{10} (2001), 297-309. arXiv: gr-qc/0209050.


\bibitem{Wanas 2009} M. I. Wanas, \emph{An AP-structure with Finslerian flavour, I.}, Mod. Phys. Lett. A, \textbf{24, 22} (2009), 1749-1762. arXiv: 0801.1132 [gr-qc].

\bibitem{GAP} M. I. Wanas, N. L. Youssef and A. M. Sid-Ahmed, {\it Geometry of parallelizabl manifolds in the context of generalized Lagrange spaces}, Balkan J. Geome.  Appl., \textbf{13}, \textbf{ 2} (2008), 120-139. arXiv: 0704.2001 [gr-qc].

\bibitem{quant}M. I. Wanas, Nabil L. Youssef and A. M. Sid-Ahmed, \emph{Teleparallel Lagrange geometry and a unified field theory}, Class. Quant. Grav., \textbf{27} (2010), 045005, 1-29. arXiv: 0905.0209 [gr-qc].

\bibitem{quant1} M. I. Wanas, N. L. Youssef and A. M. Sid-Ahmed, {\it Teleparallel Lagrange geometry and a unified field theory: Linearization of the field equations}, To appear in Int. J. Geom. Meth. Mod. Phys. \textbf{10}, \textbf{ 3} (2013). arXiv: arXiv: 1107.0482 [gr-qc].

\bibitem{SG}N. L. Youssef, S. H. Abed and S. G. Elgendi,{\it Generalized Beta Conformal Changes: General theory of $(\alpha,\,\beta)$-metric with applications to special Finsler spaces}, Lambert Academic Publishing (LAP), 2012.

\bibitem{Amr soliman}  N. L. Youssef, S. H. Abed and A. Soleiman, \emph{A global approach to the theory of special Finsler manifolds}, J. Math. Kyoto Univ., \textbf{48} (2008), 857-893. arXiv: 0704.0053 [math.DG].

\bibitem{AMR} N. L. Youssef and A. M. Sid-Ahmed, {\it Linear connections and curvature tensors in the geometry of parallelizabl manifolds}, Rep. Math. Phys., \textbf{60}, \textbf{1} (2007), 39-53. arXiv: gr-qc/0604111.

\end{thebibliography}

\end{document}